\theoremstyle{plain}
\newtheorem{thm}{\protect\theoremname}[section]
\newtheorem{prop}[thm]{\protect\propositionname}
\newtheorem{lem}[thm]{\protect\lemmaname}
\newtheorem{cor}[thm]{\protect\corollaryname}
\newtheorem{fact}[thm]{\protect\factname}
\newtheorem{obs}[thm]{\protect\observationname}
\theoremstyle{definition}
\newtheorem{defn}[thm]{\protect\definitionname}
\newtheorem{exmp}[thm]{\protect\examplename}
\newtheorem{question}[thm]{\protect\questionname}
\theoremstyle{remark}
\newtheorem{rem}[thm]{\protect\remarkname}
\newtheorem{claim}[thm]{\protect\claimname}
\crefname{thm}{Theorem}{Theorems}
\crefname{lem}{Lemma}{Lemmas}
\providecommand{\theoremname}{Theorem}
\providecommand{\propositionname}{Proposition}
\providecommand{\lemmaname}{Lemma}
\providecommand{\corollaryname}{Corollary}
\providecommand{\factname}{Fact}
\providecommand{\conjecturename}{Conjecture}
\providecommand{\observationname}{Observation}
\providecommand{\definitionname}{Definition}
\providecommand{\examplename}{Example}
\providecommand{\questionname}{Question}
\providecommand{\remarkname}{Remark}
\providecommand{\notationname}{Notation}
\providecommand{\claimname}{Claim}
\providecommand{\casename}{Case}
\newcommand{\bb}[1]{\mathbb{#1}}
\newcommand{\cal}[1]{\mathcal{#1}}
\newcommand{\N}{\bb{N}}
\newcommand{\Z}{\bb{Z}}
\newcommand{\Q}{\bb{Q}}
\newcommand{\tp}{\operatorname{tp}}
\newcommand{\eq}{\operatorname{eq}}
\newcommand{\cM}{\cal{M}}
\newcommand{\cN}{\cal{N}}
\newcommand{\cZ}{\cal{Z}}
\newcommand{\cP}{\cal{P}}
\newcommand{\cG}{\cal{G}}
\newcommand{\cD}{\cal{D}}
\newcommand{\curlyb}[1]{ \left\{ #1 \right\} }
\newcommand{\set}[2]{ \left\{ #1 \, : \, #2 \right\} }
\newcommand{\roundedb}[1]{ \left( #1 \right) }
\newcommand{\squareb}[1]{ \left[ #1 \right] }
\newcommand{\pipes}[1]{ \left| #1 \right| }
\newcommand{\dcl}{\operatorname{dcl}}
\newcommand{\acl}{\operatorname{acl}}
\newcommand{\Th}[1]{\operatorname{Th} \roundedb{ #1 }}
\newcommand{\spn}[2][]{\operatorname{span}_{#1} \roundedb{ #2 }}
\newcommand{\spnplus}[1]{\operatorname{span}^{+} \roundedb{ #1 }}
\newcommand{\Ind}[2]{ #1\setbox0=\hbox{$#1x$}\kern\wd0\hbox to 0pt{\hss$#1\mid$\hss} \lower.9\ht0\hbox to 0pt{\hss$#1\smile$\hss}\kern\wd0
}
\newcommand{\Notind}[2]{ #1\setbox0=\hbox{$#1x$}\kern\wd0\hbox to 0pt{\mathchardef \nn="3236\hss$#1\nn$\kern1.4\wd0\hss}\hbox to 0pt{\hss$#1\mid$\hss}\lower.9\ht0 \hbox to 0pt{\hss$#1\smile$\hss}\kern\wd0
}
\newcommand{\ind}{\mathop{\mathpalette\Ind{}}}
\title[Stable reducts of Abelian groups]{Stable reducts of elementary extensions of Presburger arithmetic}
\author[A. Alouf]{Eran Alouf}
\address{Einstein Institute of Mathematics, Hebrew University of Jerusalem,
91904, Jerusalem Israel.}
\email{Eran.Alouf@mail.huji.ac.il}
\author[A. Fornasiero]{Antongiulio Fornasiero}
\address{Dipartimento di Matematica e Informatica `Ulisse Dini' (DIMAI),
Università degli Studi di Firenze, 50134 Firenze (FI), Italy}
\email{antongiulio.fornasiero@unifi.it}
\urladdr{https://sites.google.com/site/antongiuliofornasiero}
\author[I. Kaplan]{Itay Kaplan}
\address{Einstein Institute of Mathematics, Hebrew University of Jerusalem,
91904, Jerusalem Israel.}
\email{kaplan@math.huji.ac.il}
\thanks{The third author would like to thank the Israel Science Foundation (grants no. 1254/18 and 804/22).}
\subjclass[2020]{Primary 03C45, 03C64; Secondary 03C07, 06F20}
\keywords{Stable groups, Presburger arithmetic, tame expansions of the integers}
\begin{document}

\begin{abstract}
Suppose $N$ is elementarily equivalent to an archimedean ordered abelian group $\left( G,+,< \right)$ with small quotients (for all $1 \leq n < \omega$, $[G: nG]$ is finite). Then every stable reduct of $N$ which expands $\left( G,+ \right)$  (equivalently every reduct that does not add new unary definable sets) is interdefinable with $\left( G,+ \right)$. This extends previous results on stable reducts of $\left(\Z, +, <\right)$ to (stable) reducts of elementary extensions of~$\Z$. In particular this holds for $G = \Z$ and $G = \Q$. As a result we answer a question of Conant from 2018.

This result is a corollary of a more general statement about expansions of weakly-minimal 1-based expansions of abelian groups with small quotients preserving the $\acl$ operator.
%Let $N = (G, + , <)$ be elementarily equivalent to $(\Z, +, <)$, and $M$ be a reduct of $N$ expanding $(G, +)$. We show that if either $M$ is stable, or (as it turns out, equivalently) every unary set that is $M$-definable is also definable in $(G,+)$, then $M$ and $(G, +)$ are interdefinable. This is a special case of a similar result established for a larger class of ordered abelian groups $(G, + , <)$ such as $(\Q,+,<)$.
\end{abstract}

\maketitle

\section{Introduction}
The classification of reducts and expansions of various structures is one of the most fundamental problems in model theory. In particular, classical structures expanding $\left( \Z,+,0,1 \right)$ such as Presburger arithmetic (i.e., $\left( \Z,+,0,1,< \right)$), their reducts and expansions (with various properties), have been the center of many recent papers. A remarkable result of Conant in this direction is the following:
%In \cite{ConantPillay2018} Conant and Pillay showed that $\roundedb{\Z,+,0,1}$ has no proper stable expansions of finite dp-rank. 
%In \cite{Conant2018_no_intermediate} Conant showed the following:

\begin{fact}[\cite{Conant2018_no_intermediate}]
\label{fact_conant_no_intermediate_structures_order}Suppose that $\cZ$ is an expansion of $\roundedb{\Z,+,0,1}$ and a reduct of $\roundedb{\Z,+,0,1,<}$. Then $\cZ$ is interdefinable with either $\roundedb{\Z,+,0,1}$ or $\roundedb{\Z,+,0,1,<}$.
\end{fact}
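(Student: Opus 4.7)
The plan is to dichotomize on whether the intermediate structure $\cZ$ introduces any new unary definable subset of $\Z$ beyond those definable in $\roundedb{\Z,+,0,1}$.

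First, suppose every $\cZ$-definable unary set is already $\roundedb{\Z,+,0,1}$-definable; the aim is to conclude $\cZ=\roundedb{\Z,+,0,1}$. I would use quantifier elimination for Presburger in $\curlyb{+,-,0,1,<}\cup\curlyb{\equiv_n : n\geq 2}$ and for $\roundedb{\Z,+,0,1}$ in the same language without $<$. Any $\cZ$-definable set then decomposes into cells cut out by linear inequalities and congruences, and the order content of such a cell is detected on its unary slices (fixing all but one coordinate to integer values). By induction on arity, the absence of new unary content at every slice forces the absence of any new content whatsoever, so $\cZ=\roundedb{\Z,+,0,1}$.

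Next, suppose $\cZ$ defines a unary set $X\subseteq\Z$ not definable in $\roundedb{\Z,+,0,1}$; the aim is to show $\cZ$ defines $<$. Every Presburger-definable subset of $\Z$ is eventually periodic at $+\infty$ and at $-\infty$, and is $\roundedb{\Z,+,0,1}$-definable precisely when the two periodic tails coincide. So there exist $n\geq 1$ and $a\in\Z$ such that $a+nk\in X$ for all sufficiently large $k\in\Z$ while $a+nk\notin X$ for all sufficiently negative $k$ (or vice versa). Intersecting $X$ with the $\roundedb{\Z,+,0,1}$-definable coset $a+n\Z$ and removing the specific finite exceptional set (a union of singletons, hence $\roundedb{\Z,+,0,1}$-definable) yields a $\cZ$-definable set equal to $\curlyb{a+nk : k\geq k_0}$ for some fixed $k_0\in\Z$. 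Translating by the integer $-(a+nk_0)$ gives $n\N$, and $\N$ is recovered as $\curlyb{y : ny\in n\N}$. Finally $x<y \Leftrightarrow y-x\in\N\setminus\curlyb{0}$ defines the order, so $\cZ=\roundedb{\Z,+,0,1,<}$.

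The main obstacle is the first case. The second case is essentially combinatorial, exploiting the transparent structure of unary Presburger-definable sets. The first case, by contrast, requires ruling out hidden order content in higher-arity sets that does not show up on any unary slice, and one expects this to hinge either on a careful cell-decomposition and QE analysis for Presburger, or on a stability-theoretic input (such as dp-minimality of Presburger together with a structural dichotomy for dp-minimal expansions of $\roundedb{\Z,+}$) to collapse the intermediate structure all the way down to $\roundedb{\Z,+,0,1}$.
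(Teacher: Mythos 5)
Your dichotomy — either $\cZ$ adds a new unary set, or it adds none — is exactly the right framing, and it matches the strategy of the alternative proof in Alouf--d'Elb\'ee (and, in the generalization, of the present paper). Your second case is essentially correct and essentially the known argument: from a Presburger-definable but not $(\Z,+,0,1)$-definable unary set whose two eventual periodic tails disagree, you extract an infinite arithmetic ray, translate it to $n\N$, recover $\N$, and hence recover $<$. Up to cosmetic differences this is \cref{lem_extracting_an_interval} specialized to $\Z$ (and Lemma~5.12 in the cited Alouf--d'Elb\'ee paper).

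The first case, however, is a genuine gap, and it is the hard part of the theorem. You assert that ``the absence of new unary content at every slice forces the absence of any new content whatsoever, by induction on arity.'' This is not a proof; it is a restatement of what needs to be shown. The inductive step would require the following nontrivial slice lemma: if $X\subseteq\Z^n$ is Presburger-definable and every one-variable slice of $X$ (over all parameters) is $(\Z,+,0,1)$-definable, then $X$ itself is $(\Z,+,0,1)$-definable. The Presburger cell decomposition of $X$ uses inequalities of the form $a_1x_1+\cdots+a_nx_n<c$ with several nonzero $a_i$, and while each individual cell plainly carries order information, nothing immediate prevents the boolean combination of cells from cancelling order content on all one-variable slices while retaining it in higher arity. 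Ruling that out is precisely what Conant's original ``hands on'' proof does, over several pages of careful cell analysis. The alternative route (Alouf--d'Elb\'ee, and this paper in greater generality) does not attempt a slicing argument at all; instead it shows that ``no new unary definable sets'' implies $\cZ$ is \emph{stable} (this already requires the nontrivial device of \cref{fact_detecting_intervals} to pass to a monster model), and then invokes heavy stability-theoretic input: either the Conant--Pillay theorem that $(\Z,+,0,1)$ has no proper stable dp-finite expansion, or, in the present paper, the weakly-minimal one-based machinery culminating in Loveys' theorem \cref{Loveys_on_weakly_minimal_one_based} together with the homomorphism preservation lemma \cref{lem_homomorphism_preservation_small_quotients}. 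You correctly flag this at the end of your proposal, but as written the proposal does not supply the missing key lemma, and ``induction on arity'' cannot replace it.
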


The proof used a ``hands on'' approach to analyze definable sets in Presburger arithmetic. 
One of Conant's motivations was to answer a question of Marker from 2011 asking about the case where $\cZ$ is \emph{stable}. However, this special case was already solved by Conant and Pillay in \cite{ConantPillay2018} where they showed that $\roundedb{\Z,+,0,1}$ has no proper stable expansions of finite dp-rank. 

In \cite{Alouf_dElbee_19}, d'Elb\'{e}e and the first author used the aforementioned result from \cite{ConantPillay2018} to give an alternative (and arguably simpler) proof for \cref{fact_conant_no_intermediate_structures_order} (see \cite[Theorem 5.14]{Alouf_dElbee_19}). 
They also proved an analogous result for $\roundedb{\Z,+,0,1,\preceq_{p}}$, where $p$ is prime and $\preceq_{p}$ is the partial order associated with the $p$-adic valuation $v_p$ (i.e., $a \preceq_{p} b$ holds if and only if $v_p \roundedb{a} \leq v_p \roundedb{b}$):

\begin{fact}[{\cite[Corollary 1.12]{Alouf_dElbee_19}}]
\label{fact_alouf_dElbee_no_intermediate_structures_p_adic}
Suppose that $\cZ$ is an expansion of $\roundedb{\Z,+,0,1}$ and a reduct of $\roundedb{\Z,+,0,1,\preceq_{p}}$. Then $\cZ$ is interdefinable with either $\roundedb{\Z,+,0,1}$ or $\roundedb{\Z,+,0,1,\preceq_{p}}$.
\end{fact}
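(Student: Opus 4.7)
The plan is to establish the dichotomy that any such $\cZ$ is either stable---in which case it must collapse to $\roundedb{\Z,+,0,1}$---or else already strong enough to define $\preceq_{p}$.

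The starting observation is that $\roundedb{\Z,+,0,1,\preceq_{p}}$ is dp-minimal, so every reduct $\cZ$ is dp-minimal and in particular has finite dp-rank. If $\cZ$ is stable, I would invoke the Conant--Pillay theorem from \cite{ConantPillay2018} that $\roundedb{\Z,+,0,1}$ has no proper stable expansion of finite dp-rank; this immediately forces $\cZ$ to be interdefinable with $\roundedb{\Z,+,0,1}$.

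If $\cZ$ is unstable, I would fix a formula $\varphi\roundedb{x,y}$ with the order property in $\cZ$ and try to reconstruct $\preceq_{p}$ from it. The route is to use a quantifier-elimination description of $\roundedb{\Z,+,0,1,\preceq_{p}}$ in an enriched language with explicit predicates for comparisons $v_p\roundedb{\ell_1} \leq v_p\roundedb{\ell_2}$ of $p$-adic values of linear forms, together with Presburger-style congruence predicates, reducing $\varphi$ to a Boolean combination of such atomic pieces. The only unstable ingredients available are the valuational comparisons, so an unstable $\varphi$ must essentially involve one such comparison; specialising the parameters in $y$ and quotienting out the congruence and linear-equation part should then recover $\preceq_{p}$ on some subgroup of finite index in $\roundedb{\Z,+}$, which together with the additive structure already present in $\cZ$ suffices to define $\preceq_{p}$ on all of $\Z$.

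The main obstacle is exactly this last step: showing that no unstable formula can ``hide'' the valuation behind Boolean combinations, congruences, and coincidences among linear forms, and that the definable comparison one extracts can always be massaged back into $\preceq_{p}$ itself rather than some weaker relation (for example, an order defined only up to a finite-index subgroup, or comparison of $v_p$ on a definable proper subset). This requires a careful combinatorial analysis of how $p$-adic comparisons of linear forms interact with Presburger constraints, parallel in spirit to the unstable branch of the alternative proof of \cref{fact_conant_no_intermediate_structures_order} given in \cite[Theorem 5.14]{Alouf_dElbee_19}, but using the valuation rather than the archimedean order.
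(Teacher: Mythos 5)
This statement appears in the paper only as a cited Fact (from \cite[Corollary 1.12]{Alouf_dElbee_19}); the paper gives no proof of its own, so there is nothing internal to compare against. Evaluating your sketch on its own terms: the dichotomy you set up, and the stable branch, are sound. Indeed $\roundedb{\Z,+,0,1,\preceq_{p}}$ has finite dp-rank (it is dp-minimal), reducts cannot increase dp-rank, so a stable $\cZ$ is a stable expansion of $\roundedb{\Z,+,0,1}$ of finite dp-rank and the Conant--Pillay theorem from \cite{ConantPillay2018} applies. That part is exactly the route taken in the cited reference.

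The unstable branch, however, is not a proof but a plan, and you say so yourself. The ``careful combinatorial analysis'' that you defer is precisely the technical heart of the result, and several nontrivial points are hidden in your outline. First, a usable quantifier elimination for $\roundedb{\Z,+,0,1,\preceq_p}$ in a language with valuational comparison predicates and congruences has to be stated and justified; the interaction between $v_p$-comparisons of linear forms and congruence conditions is not trivial. Second, even with QE in hand, the step from ``some unstable Boolean combination'' to an actual definable ball $\set{x}{v_p\roundedb{x}\geq n}$ is the hard part: the archimedean analogue in this very paper (\cref{lem_extracting_an_interval}) takes a full page of careful case analysis, and the $p$-adic version is no easier. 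Third, your remark about ``recovering $\preceq_p$ on a finite-index subgroup'' glosses over the need to transfer this back to all of $\Z$; this works but requires an explicit argument about how $\preceq_p$ interacts with multiplication by integers coprime to $p$. As it stands the proposal correctly identifies the strategy of \cite{Alouf_dElbee_19} but leaves the entire unstable case, which is where all the work lies, as a gap.
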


In \cite{Alouf_dElbee_19} it was also shown that both \cref{fact_conant_no_intermediate_structures_order} and \cref{fact_alouf_dElbee_no_intermediate_structures_p_adic} fail when $\roundedb{\Z,+,0,1,<}$ and $\roundedb{\Z,+,0,1,\preceq_{p}}$, respectively, are replaced with proper elementary extensions of $\roundedb{\Z,+,0,1,<}$ and $\roundedb{\Z,+,0,1,\preceq_{p}}$. For proper elementary extensions of $\roundedb{\Z,+,0,1,\preceq_{p}}$, it was shown that there are both stable and unstable intermediate structures (see \cite[Propositions 6.1 and 6.2]{Alouf_dElbee_19}). However, for proper elementary extensions of $\roundedb{\Z,+,0,1,<}$, it was only shown that there are unstable intermediate structures (see \cite[Proposition 6.3]{Alouf_dElbee_19}), and the question remained whether there are also stable intermediate structures.
The work leading to this paper began as an attempt to answer this question, and indeed, we give a negative answer as a special case of \cref{main_theorem_2_intro}.

To formalize our results, we distinguish between the notions of expansion/reduct and $0$-expansion/$0$-reduct, see \cref{def_reduct_expansion}.
Our first main theorem is as follows:

\begin{thm}[\cref{main_theorem}]
\label{main_theorem_1_intro}
Let $M$ be a weakly-minimal and 1-based structure expanding an abelian group $\left( G,+ \right)$ with small quotients (i.e., for all $1 \le n < \omega$, $\squareb{G:nG}$ is finite).
Let $N$ be a $0$-expansion of $M$ such that:
\begin{enumerate}
\item $N$ is $\omega$-saturated.
\item Every unary subset $X \subseteq G$ that is definable in $N$ is also definable in $M$.
\item For every $A \subseteq G$, $\acl_M \roundedb{A} = \acl_N \roundedb{A}$.
\end{enumerate}
Then $M$ and $N$ are interdefinable.
\end{thm}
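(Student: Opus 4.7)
The plan is to show that every $N$-definable set is already $M$-definable, which immediately yields interdefinability. The argument has two main stages: first, transfer stability and $1$-basedness from $M$ to $N$; second, use the resulting structure theorem to reduce to $N$-definable subgroups of $G^n$ and show that these are $M$-definable.

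For the first stage: since $M$ is weakly minimal and $1$-based, its non-forking independence $\ind^M$ coincides with algebraic independence, and by~(3) algebraic closure agrees in $M$ and $N$. Hence $\ind^M$ is a natural candidate for an abstract independence relation on $N$, and I would verify the Kim--Pillay axioms for it in $N$: existence, extension, symmetry, transitivity, and local character transfer routinely from $M$. The delicate axiom is stationarity over models, and here both (1) and (2) are essential --- the $\omega$-saturation supplies realizations of types, while the coincidence of unary definable sets is used to rule out any nontrivial splitting of an $M$-stationary type into distinct $N$-types. The Kim--Pillay theorem then yields that $N$ is stable and $1$-based with forking equal to $\ind^M$. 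For the second stage, the coset theorem for $1$-based stable groups gives that every $N$-definable set is a Boolean combination of cosets of $N$-definable subgroups of some $G^n$, so it suffices to show every such subgroup $H \le G^n$ is $M$-definable. By $1$-basedness, the connected component $H^0$ is $\acl^{eq}(\emptyset)$-definable, and by the coincidence of $\acl$ in $M$ and $N$ it is $M$-definable. Then $H$ is a finite union of cosets $H^0 + a_i$, and the small-quotients hypothesis, combined with~(2) for unary data, lets the $a_i$ be chosen in an $M$-definable way.

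The main obstacle is the first stage: verifying stationarity of $\ind^M$ in $N$ over $\omega$-saturated models. This is precisely the point at which the hypotheses (1) and (2) must combine, and it is the conceptual heart of the proof. The second stage, while more routine once $1$-basedness of $N$ is in hand, still relies essentially on the small-quotients assumption to pass from $H^0$ to $H$ without introducing parameters outside $M$.
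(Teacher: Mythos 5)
Your proposal has the right general shape — transfer stability and $1$-basedness to $N$, then use the structure theory of weakly-minimal $1$-based groups — but there is a genuine gap in the second stage, and the first stage is cast in a less efficient framework than what actually works here.

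On the first stage: you propose to verify the Kim--Pillay axioms for $\ind^M$ read inside $N$, with stationarity over models as the crux. This is plausible but unnecessary. The paper instead shows \emph{directly} that $N$ is weakly minimal: by hypothesis (2), every unary $N$-definable set is $M$-definable, so every non-algebraic unary $N$-formula is realized in any small model (by \cref{weakly_minimal_equiv_conditions}), which is exactly the criterion for weak minimality. Stability comes for free with weak minimality, with no Kim--Pillay verification needed. Once both structures are weakly minimal, forking independence coincides with $\acl$-independence (\cref{u_rank_1_acl_facts}), so $\ind^M = \ind^N$ on real tuples follows immediately from hypothesis (3). To transfer $1$-basedness one then uses geometric elimination of imaginaries (which holds because $\acl(\emptyset)$ is a model after naming constants, by \cref{weakly_minimal_and_acl0_is_a_model_implies_wei}) and the characterization in \cref{equivalent_cond_for_1_based_under_gei}; this is where the imaginary/real gap is bridged.

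On the second stage, the gap: you write that since $H^0$ is $\acl^{eq}(\emptyset)$-definable in $N$, the coincidence of $\acl$ in $M$ and $N$ makes it $M$-definable. This does not follow. Hypothesis (3) says the \emph{real} algebraic closure operators agree; it does not say that a set which $N$ defines over algebraic parameters can be defined by a formula in the language of $M$. That conclusion is exactly the content of the theorem and cannot be assumed. Even with weak elimination of imaginaries in hand, knowing the canonical parameter of $H^0$ lies in $\acl_N(\emptyset) = \acl_M(\emptyset)$ tells you nothing about the \emph{formula} cutting out $H^0$. Your final sentence about choosing the coset representatives ``in an $M$-definable way'' using small quotients and (2) is similarly ungrounded; small quotients and unary preservation do not obviously let you recover an $N$-definable subgroup of $G^n$ for $n \geq 2$.

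The missing ingredient is the paper's homomorphism preservation lemma (\cref{lem_homomorphism_preservation_small_quotients}): if $N$ is $\omega$-saturated, $0$-expands $M$, $G$ has small quotients, and $\dcl_N \subseteq \acl_M$, then every $N$-definable endomorphism of $(G,+)$ is already $M$-definable. This is proved by a nontrivial pigeonhole argument and is the technical heart of the whole theorem. Once one has it, Loveys' theorem (\cref{Loveys_on_weakly_minimal_one_based}) says a weakly-minimal $1$-based expansion of an abelian group of unbounded exponent is interdefinable with the group together with its definable endomorphism ring, and interdefinability drops out. Your coset-theorem route would need an analogous preservation statement for definable subgroups of $G^n$, which you have not supplied and which does not reduce to hypotheses (1)--(3) without an argument of roughly the same difficulty as \cref{lem_homomorphism_preservation_small_quotients}.
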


A key ingredient in the proof of \cref{main_theorem_1_intro} is the following preservation lemma for group homomorphisms, which may be of independent interest:

\begin{lem}[\cref{lem_homomorphism_preservation_small_quotients}]
Suppose that $N$ is an $\omega$-saturated structure $0$-expanding a structure $M$ that expands an abelian group $\left( G, + \right)$ with small quotients.
Let $f : G \to G$ be a group homomorphism definable in $N$. If there exists a finite set $F \subseteq G$ such that for every $g\in G$, $f\left(g\right) \in \acl_{M}\left(Fg\right)$, then $f$ is definable in $M$.

In particular, if  $\dcl_{N} \subseteq \acl_{M}$, then every group homomorphism $f : G \to G$ definable in $N$ is definable in $M$.
\end{lem}

We also prove \cref{lem_homomorphism_preservation_R_groups}, another version of the same idea that holds for certain groups that are not necessarily abelian.

The assumption that the group has small quotients is essential in \cref{main_theorem_1_intro}, see \cref{exa:need small quotients}. It would be interesting to find more instances of such a result, and a natural question (see \cref{que:fields}) is what happens if we replace the group with a field. A related result was indeed proved by Hrushovski, where $M$ is assumed to be a pure algebraically closed field, see \cref{hrushovski_theorem_about_strongly_minimal_expansions_of_fields}.

The second main theorem is an application of \cref{main_theorem_1_intro} to ordered abelian groups:

\begin{thm}[\cref{thm_application_small_quotients}]
\label{main_theorem_2_intro}
Let $N = \roundedb{G,+,<}$ be elementarily equivalent to an archimedean ordered abelian group with small quotients, and let $M$ be a reduct of $N$ that expands $\roundedb{G,+}$. Suppose that one of the following holds:
\begin{enumerate}

\item $M$ is stable, or

\item every unary subset $D \subseteq G$ that is definable in $M$, is also definable in $\roundedb{G,+}$.
\end{enumerate}

Then $M$ is interdefinable with $\roundedb{G,+}$.
\end{thm}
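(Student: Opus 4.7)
The plan is to derive this theorem as an application of \cref{main_theorem_1_intro}, taking the abelian group $(G,+)$ as the base structure ``$M$'' in that theorem and the reduct $M$ of the current statement as ``$N$''. Preliminarily, $(G,+)$ is $1$-based as a pure abelian group (a classical result on pure modules), and since its only proper definable subgroups are the finite-index subgroups $nG$, it is weakly minimal of $U$-rank one; the small-quotients hypothesis passes through directly. After replacing $N$ with an $\omega$-saturated elementary extension $N^{*}$ and $M$ with the corresponding reduct $M^{*}$ of $N^{*}$, interdefinability of $M^{*}$ with $(G^{*},+)$ transfers back to $M$ by elementarity, so the saturation hypothesis of \cref{main_theorem_1_intro} is now satisfied.

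For the $\acl$-agreement condition, the sandwich $\acl_{(G,+)}(A) \subseteq \acl_M(A) \subseteq \acl_N(A)$, combined with the standard fact that in archimedean ordered abelian groups with small quotients (and their elementary extensions) algebraic closure in the ordered structure coincides with algebraic closure in the pure-group reduct --- namely the divisible hull of $\langle A \rangle$ intersected with $G$ --- yields equality throughout. The remaining hypothesis of \cref{main_theorem_1_intro}, that every unary $M$-definable subset of $G$ is $(G,+)$-definable, is immediate under assumption (2) of the theorem.

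Under assumption (1), I would invoke a normal-form result for $N$-definable unary sets: every such set is a finite Boolean combination of cosets of subgroups $nG + c$ and of one-sided intervals $\{x : x < d\}$, $\{x : x > d\}$ with parameters $d$. If an $M$-definable $X \subseteq G$ had an essential convex component in its normal form, then translating $X$ by group elements (a $(G,+)$- and hence $M$-definable operation) would yield an $M$-definable family that recovers the order on $G$ via the order property, contradicting stability of $M$. Hence the normal form of $X$ consists of cosets only, so $X$ is $(G,+)$-definable and all hypotheses of \cref{main_theorem_1_intro} are met. The main obstacle is this last step: one must show that the ``convex component'' of $X$ is a canonical invariant of the set, independent of the particular $N$-formula chosen to define it, and that its nontriviality genuinely forces the order on $G$ to become $M$-definable; the remaining pieces are standard bookkeeping via known results on pure abelian groups and on archimedean ordered abelian groups with small quotients.
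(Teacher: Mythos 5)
Your overall strategy matches the paper exactly: reduce to \cref{main_theorem_1_intro} with $(G,+)$ playing the role of the weakly-minimal, $1$-based base structure, transfer to an $\omega$-saturated model, use the $\acl$-agreement coming from quantifier elimination in archimedean ordered abelian groups with small quotients (the paper's \cref{cor_acl_archimedean_small_quotients}), and argue that stability forbids new unary definable sets. The identification of $(G,+)$ as $1$-based and weakly minimal, and the reduction of stability to the normal-form/convexity analysis, are both correct.

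There are two issues in the details. First, you claim that under assumption (2) the no-new-unary-sets hypothesis of \cref{main_theorem_1_intro} is ``immediate,'' but it is not: after replacing $N,M$ by the saturated $N^{*},M^{*}$, you must show the condition holds with all the new parameters of $N^{*}$ available, which does not follow formally from its holding in the original model (and the paper's \cref{obs_not_adding_unary_subsets_enough_to_check_for_saturated_enough} does not apply here, since $M$ is only a reduct, not a $0$-reduct, of $N$). The paper handles this transfer in \cref{prop_equiv_between_stability_and_no_new_unary_def_subsets_and_no_new_unary_def_subsets_in_monster} (the $(2)\Rightarrow(3)$ direction) via the ``detecting intervals'' formula of \cref{fact_detecting_intervals} together with the extraction lemma, pulling an infinite convex set back down into the original $M$. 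Second, the obstacle you flag --- that the ``convex component'' of $X$ should be a canonical invariant of $X$ --- is not the right worry and is not needed. In \cref{lem_extracting_an_interval} one fixes an arbitrary $N$-normal-form of $D$, observes that if all its interval components were trivial then $D$ would be $(G,+)$-definable (contradiction), localizes to one non-definable coset class, and then applies a chain of operations each $M$-definable from $D$ alone (translation, division by $m$, intersection, complementation, discarding finitely many points, taking differences) to manufacture a single infinite interval of the form $(0,b)$; from there $x_2-x_1\in(0,b)$ gives the order on that interval and hence instability. So canonicity is bypassed entirely; the real work is that explicit reduction, which your proposal leaves unspecified.
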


As part of the proof, we show that in the settings of \cref{main_theorem_2_intro}, the assumptions (1) and (2) are equivalent (see \cref{prop_equiv_between_stability_and_no_new_unary_def_subsets_and_no_new_unary_def_subsets_in_monster}). 

In particular, \cref{main_theorem_2_intro} holds if, instead of small quotients, we make the stronger assumption that $G$ has finite rank as a torsion-free abelian group (see \cref{cor_application_finite_rank}).

As an application of the proof we prove that in the context of \cref{main_theorem_2_intro}, if $M$ is a proper expansion of $\left( G,+ \right)$ then $M$ defines the order on some infinite interval, see \cref{cor_defining_the_order_on_an_interval}. This answers positively a question of Conant, see \cite[Question 1.5]{Conant2018_no_intermediate}.

\subsection{Structure of the paper}
In \cref{sec:preliminaries} we establish all the facts and notations needed to prove the main results. In particular, we discuss expansions and reducts in \cref{subs:exapnsions and reducts}, facts about groups in \cref{subs:groups}, weak minimality and 1-basedness in \cref{subs:weak minimality}, and imaginaries in \cref{subs:weak and geometric EI} and Archimedean ordered abelian groups in \cref{subs:archimedean OAGs}. \cref{sec:preservation of hom} is devoted to \cref{lem_homomorphism_preservation_small_quotients} mentioned above. \cref{sec:main theorem} handles \cref{main_theorem_1_intro} and \cref{sec:elementary extentions} deals with \cref{main_theorem_2_intro}.

\section{Preliminaries} \label{sec:preliminaries}

\subsection{Expansions and reducts}\label{subs:exapnsions and reducts}
Here we make precise what we mean when discussing expansions and reducts, and collect some observations.
In this paper, when we say that a subset $X$ in a structure $M$ is definable without specifying a parameter set, we mean that $X$ is definable with parameters from $M$.  

\begin{defn}
\label{def_reduct_expansion}
Let $M$ and $N$ be two structures with the same universe, and let $A$ be a subset of this universe.
\begin{enumerate}
\item The structure $M$ is an \emph{$A$-reduct} of $N$ if whenever $X \subseteq M^{k}$ is $\emptyset$-definable in $M$, it is also $A$-definable in $N$. We also say that $N$ is an \emph{$A$-expansion} of $M$.
\item The structures $M,N$ are \emph{$A$-interdefinable} iff $M$ is an $A$-reduct of $N$ and vice versa.
\item When $A$ is the entire universe, we simply say \emph{reduct}, \emph{expansion} and \emph{interdefinable}. When $A=\emptyset$ we also write \emph{$0$-reduct}, \emph{$0$-expansion} and \emph{$0$-interdefinable}.
\end{enumerate}
\end{defn}

\begin{exmp}
Let $N = \roundedb{G,+,<}$ be a proper elementary extension of $ \roundedb{\Z,+,<}$. Let $a \in G$ be a positive and infinite element, and $M := \roundedb{G,+,\squareb{0,a}}$. Then $M$ is a reduct of $N$, but it is not a $0$-reduct.  
\end{exmp}

In this paper, we focus only on definable sets, not specific languages. Therefore, whenever a structure $N$ is a $0$-expansion of a structure $M$, we can (and will) assume that the language of $N$ contains the language of $M$.

\begin{obs}
\label{interdefinability_passes_to_elementary_substructures}
Let $M$ be a structure and $N$ be an expansion of $M$. Let $N'$ be an elementary extension of $N$, and let $M'$ be the reduct of $N'$ to the language of $M$. If $M'$ and $N'$ are interdefinable, then $M$ and $N$ are interdefinable.
\end{obs}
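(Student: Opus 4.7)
The plan is to prove the nontrivial direction: assuming $M'$ and $N'$ are interdefinable, every set definable in $N$ is already definable in $M$ (the other direction, that $N$-definable sets from $M$ give $M$-definable sets, is immediate since $N$ expands $M$). Let $\cal{L}_M$ and $\cal{L}_N$ denote the respective languages, with $\cal{L}_M \subseteq \cal{L}_N$ by the convention set up earlier in the excerpt.

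Fix $X \subseteq M^k$ definable in $N$, say by an $\cal{L}_N$-formula $\varphi(\bar x, \bar b)$ with $\bar b \in N$. Viewing $\bar b$ as a tuple from $N'$ (since $N \preceq N'$), the formula $\varphi(\bar x, \bar b)$ also defines a subset $X'$ of $(N')^k$. By the interdefinability of $M'$ and $N'$, there exist an $\cal{L}_M$-formula $\psi(\bar x, \bar y)$ and a parameter tuple $\bar c \in N'$ such that
\[
N' \models \forall \bar x \, \bigl( \varphi(\bar x, \bar b) \leftrightarrow \psi(\bar x, \bar c) \bigr).
\]
In particular, $N' \models \exists \bar y \, \forall \bar x \, \bigl( \varphi(\bar x, \bar b) \leftrightarrow \psi(\bar x, \bar y) \bigr)$.

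Now comes the single key step: the displayed existential statement is a sentence in $\cal{L}_N$ with parameters from $N$, so by elementarity $N \preceq N'$ we can pull it back to $N$, obtaining a tuple $\bar c' \in N$ with
\[
N \models \forall \bar x \, \bigl( \varphi(\bar x, \bar b) \leftrightarrow \psi(\bar x, \bar c') \bigr).
\]
Since $M$ and $N$ share the same universe and $\psi \in \cal{L}_M$, this exhibits $X = \psi(M^k, \bar c')$ as an $M$-definable set, as required.

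There is really no obstacle beyond bookkeeping; the entire content is that the equivalence between an $\cal{L}_N$-formula with fixed parameters and some $\cal{L}_M$-formula is an $\cal{L}_N$-sentence (once we existentially quantify over the auxiliary parameters), and is therefore preserved under $N \preceq N'$. Symmetric reasoning is not needed, because interdefinability is a symmetric hypothesis and $M'$ interdefinable with $N'$ already packages both directions of definability that must be transferred down to the pair $M, N$.
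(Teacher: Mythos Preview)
Your argument is correct and is exactly the standard one-line elementarity trick that the paper has in mind; the paper states this as an observation without proof, and your write-up supplies precisely the expected justification.
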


\begin{obs}
\label{obs_not_adding_unary_subsets_enough_to_check_for_saturated_enough}
Let $M$ be an $L$-structure, and let $N$ be an expansion of $M$ which is $\left|L\right|^+$-saturated. Let $N'$ be an elementary extension of $N$, and let $M'$ be the reduct of $N'$ to~$L$. Suppose that every unary subset $X \subseteq N$ that is definable in $N$ is also definable in~$M$. Then, every unary subset $X \subseteq N'$ that is definable in $N'$ is also definable in~$M'$.
If $N$ is a $0$-expansion of $M$ it is enough to assume that $N$ is $\omega$-saturated.
\end{obs}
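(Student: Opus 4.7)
The plan is a compactness argument driven by the saturation of $N$. Fix an $L(N)$-formula $\varphi(x, \bar y)$; given $\bar b \in N'$ we must produce an $L$-formula $\psi$ and parameters $\bar c \in N'$ such that $\psi(N', \bar c) = \varphi(N', \bar b)$. For each $L$-formula $\psi(x, \bar z)$, introduce the $L(N)$-formula
$$\chi_\psi(\bar y) \;:=\; \exists \bar z\, \forall x \bigl(\varphi(x, \bar y) \leftrightarrow \psi(x, \bar z)\bigr),$$
where in the general case $\psi$ must be interpreted in $N$ via its $L(N)$-translation, which uses some parameters $\bar a_\psi \in N$ (available because $M$ is a reduct of $N$). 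The hypothesis on $N$ and $M$ says exactly that every $\bar b \in N$ satisfies $\chi_\psi(\bar b)$ for some $\psi \in L$.

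Equivalently, the partial type $\pi(\bar y) \;:=\; \{\lnot \chi_\psi(\bar y) : \psi \in L\}$ is not realized in $N$. Its parameter set is contained in $\bigcup_{\psi \in L} \bar a_\psi$, hence has cardinality at most $|L|$, so the $|L|^+$-saturation of $N$ forces $\pi$ to be finitely inconsistent: there exist $\psi_1, \dots, \psi_k \in L$ with
$$N \models \forall \bar y\, \bigvee_{i=1}^{k} \chi_{\psi_i}(\bar y).$$
This is first-order and therefore passes to the elementary extension $N'$. Applied to $\bar b \in N'$, it yields some $i$ and some $\bar c \in N'$ with $\psi_i(N', \bar c) = \varphi(N', \bar b)$, as required.

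For the second statement, when $N$ is a $0$-expansion of $M$ we may assume $L \subseteq L(N)$, so each $L$-formula is already an $L(N)$-formula needing no extra parameters. Then $\chi_\psi$ uses only the finite parameter set of $\varphi$, so $\pi$ becomes a partial type over a finite set and the $\omega$-saturation of $N$ already suffices to extract the required finite subset.

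The argument is elementary once the right partial type is identified; the main delicate point---and the sole source of the gap between $\omega$-saturation and $|L|^+$-saturation---is accounting for the parameters from $N$ that are needed to translate each $L$-formula into the language of $N$, which in the general case blows up the parameter set of $\pi$ to size $|L|$.
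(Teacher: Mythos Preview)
Your argument is correct. The paper states this as an observation without proof, so there is no reference argument to compare against; your compactness proof via the unrealized partial type $\pi(\bar y) = \{\lnot\chi_\psi(\bar y) : \psi \in L\}$ is exactly the intended justification, and your account of why the parameter set of $\pi$ has size at most $|L|$ (coming from the $L(N)$-translations of the $L$-formulas) versus being empty in the $0$-expansion case correctly isolates the reason for the two different saturation hypotheses.

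Two small points of polish. First, in the $0$-expansion case your phrase ``the finite parameter set of $\varphi$'' is slightly off: $\varphi(x,\bar y)$ is taken without parameters (the tuple $\bar b$ is substituted later), so $\pi$ is genuinely over $\emptyset$. Second, you should remark (it is routine, but worth a sentence) that the translation $\psi \mapsto \psi^{*}(\,\cdot\,,\bar a_\psi)$ persists from $N$ to $N'$: since $N \models \forall x\,\bar z\,\bigl(\psi(x,\bar z) \leftrightarrow \psi^{*}(x,\bar z,\bar a_\psi)\bigr)$ and $\bar a_\psi \in N$, elementarity gives the same in $N'$, so $\psi_i^{*}(N',\bar c,\bar a_{\psi_i}) = \psi_i(M',\bar c)$ and the set you produce is indeed $M'$-definable. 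With these clarifications the proof is complete.
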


\begin{defn}
Let $M$ be a structure, and let $A \subseteq M$.
A formula $\phi\left(x\right)$ over $A$ is algebraic if $\phi\left(M\right)$ is finite. We let $\acl_{M}$ be the \emph{algebraic closure operator}.
Namely, $\acl_{M}:\cP\left(M\right)\to\cP\left(M\right)$ and $\acl_{M}\left(A\right)$ is the union of all realizations of algebraic formulas over $A$.

Similarly, we define $\dcl_{M}$, the \emph{definable closure operator}, as the function $\dcl_{M}: \cP\left(A\right)\to\cP\left(A\right)$ such that $\dcl_{M}\left(A\right)$ is the union of all realizations
of algebraic formulas of size 1 over $A$.

We omit $M$ if it is clear from the context.
\end{defn}

\begin{obs}
\label{obs_equality_of_acl_operators_enough_to_check_for_saturated_enough}
Let $M$ be a structure with language $L$, and let $N$ be a $0$-expansion of $M$ that is $\omega$-saturated. Let $N'$ be an elementary extension of $N$, and let $M'$ be the reduct of $N'$ to $L$. Suppose that $\acl_M = \acl_N$. Then $\acl_{M'} = \acl_{N'}$.
\end{obs}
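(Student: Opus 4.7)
The containment $\acl_{M'}\roundedb{A} \subseteq \acl_{N'}\roundedb{A}$ is immediate, since every formula in the language of $M$ is also a formula in the language of $N$. So the content lies in the reverse inclusion. Fix $\bar a \in N'^{n}$ and $b \in \acl_{N'}\roundedb{\bar a}$ (this suffices since $\acl$ is finitary), and pick a formula $\phi\roundedb{x,\bar y}$ in the language of $N$ together with $k < \omega$ such that $N' \models \phi\roundedb{b,\bar a}$ and $\pipes{\phi\roundedb{N',\bar a}} = k$.

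The plan is to transfer this witness back to $N$ via $\omega$-saturation, apply the hypothesis $\acl_M = \acl_N$ there, and then push the resulting witness forward again. Let $p\roundedb{x,\bar y}$ be the complete type of $\roundedb{b,\bar a}$ over $\emptyset$ in $N'$ in the language of $N$. Since $N \preceq N'$, $p$ is finitely satisfiable in $N$; since $\emptyset$ is finite and we have finitely many variables, $\omega$-saturation of $N$ yields a realization $\roundedb{b', \bar a'} \in N^{1+n}$ of $p$, irrespective of the size of the language (a standard variable-by-variable induction reduces this to realizing $1$-types over finite sets). In particular $N \models \phi\roundedb{b',\bar a'}$ and $\pipes{\phi\roundedb{N,\bar a'}} = k$, so $b' \in \acl_{N}\roundedb{\bar a'}$, and by the hypothesis $b' \in \acl_{M}\roundedb{\bar a'}$. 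Pick a formula $\chi\roundedb{x,\bar y}$ in the language of $M$ and some $k' < \omega$ with $M \models \chi\roundedb{b',\bar a'}$ and $\pipes{\chi\roundedb{M,\bar a'}} = k'$.

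To push this back to $N'$, observe that both ``$\chi\roundedb{b', \bar a'}$ holds'' and ``$\chi\roundedb{x,\bar a'}$ has exactly $k'$ solutions'' are expressible by formulas in the language of $M$, hence of $N$, so they lie in $p$. Since $\roundedb{b, \bar a}$ also realizes $p$ in $N'$, we obtain $M' \models \chi\roundedb{b, \bar a}$ and $\pipes{\chi\roundedb{M', \bar a}} = k'$, which shows $b \in \acl_{M'}\roundedb{\bar a}$, as required. The only mildly delicate point — and the one that makes bare $\omega$-saturation of $N$ sufficient, paralleling \cref{obs_not_adding_unary_subsets_enough_to_check_for_saturated_enough} — is the use of $\omega$-saturation to realize a complete type in finitely many variables over $\emptyset$, which is standard and works regardless of the size of the language. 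Everything else is a routine type-transfer argument.
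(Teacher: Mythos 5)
Your proof is correct and uses the same core tool as the paper's: realize a suitable type over $\emptyset$ in $N$ via $\omega$-saturation, apply $\acl_M = \acl_N$ there, and transfer back. The only difference is cosmetic: the paper argues by contradiction, realizing a hand-built partial type that asserts ``algebraic in $N$ but not in $M$'', whereas you realize the full complete type of $(b,\bar a)$ directly and read off the algebraic $L$-formula, which is a slightly cleaner presentation of the same idea.
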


\begin{proof}
It is enough to show that for every finite tuple $c$ in $N'$, we have $\acl_{M'}\left(c\right) = \acl_{N'}\left(c\right)$. Thus, let $c$ be a finite tuple in $N'$. Since $N'$ is a $0$-expansion of $M'$, we have $\acl_{M'}\left(c\right) \subseteq \acl_{N'}\left(c\right)$. 

For the other direction, let $e \in \acl_{N'}\left(c\right)$. Let $e', c' \in N$ be such that $ec \equiv e'c'$ (exists by saturation). Clearly, $e' \in \acl_N \left( c' \right)$. Thus, $e' \in \acl_M \left( e' \right)$. Since this information is part of the type $\tp \left( e'c' \right) = \tp \left( ec \right)$, it follows that $e \in \acl_{M'} \left( c \right)$. %So there is a formula $\phi(x,y)$ without parameters in the language of $N$ such that $\phi(x,c)$ is algebraic and  $N' \models \phi(e,c)$. Let $m := \left| \phi(N',c) \right|$ and denote by $\widetilde{\phi}(x,y)$ the following formula 
%\[
%\left( \exists^{\le m} w \phi(w,y) \right)  \wedge \phi(x,y)
%\]
%so $N' \models \widetilde{\phi}(e,c)$.
%
%Suppose, toward a contradiction, that $e \notin \acl_{M'}(c)$. So for every formula $\psi(x,y)$ without parameters in $L$ such that $\psi(x,c)$ is algebraic, we have $M' \models \neg \psi(e,c)$. Since $N'$ is a $0$-expansion of $M'$, $\psi(x,y)$ is $\emptyset$-definable in $N'$ and $N' \models \neg \psi(e,c)$. For every $k < \omega$ denote by $\theta_{\psi,k}(x,y)$ the formula
%\[
%\left( \exists^{\le k} w \psi(w,y) \right) \rightarrow \neg \psi(x,y)
%\]
%Then, for every $\psi(x,y)$ without parameters in $L$ and every $k<\omega$ we have $N' \models \theta_{\psi,k}(e,c)$.
%Thus, the collection of formulas
%\[
%\{\theta_{\psi,k}(x,y) \,:\, \psi(x,y) \text{ as above and } k<\omega\} \cup \{ \widetilde{\phi}(x,y) \}
%\]
%is consistent and without parameters; hence, it is realized in $N$. Let $(e_2,c_2)$ be a realization. Then $e_2 \in \acl_{N}\left(c_2\right)$ but $e_2 \notin \acl_{M}(c_2)$, a contradiction. 
\end{proof}

\pagebreak[3]

\subsection{Groups}\label{subs:groups}

\begin{defn}
We say that a group $G$ has \emph{unbounded exponent} if for all $1\le n < \omega$, $G^n \neq \{1\}$.
\end{defn}

\begin{defn}
\label{def_small_quotients_and_torsion}
Let $\left(G,+\right)$ be an abelian group.
\begin{enumerate}
\item We say that $G$ has \emph{small quotients} if for all $1\le n < \omega$, $nG$ has finite index in $G$.
\item We say that $G$ has \emph{small torsion} if for all $1\le n < \omega$, the set $\set{g \in G}{ng=0}$ is finite\footnote{They are also called almost torsion-free in the literature: see \cite{tkach}.}.
\end{enumerate}
\end{defn}

\begin{rem}
\,  

\begin{enumerate}
\item If $G$ is torsion-free, then it has small torsion, but it does not necessarily have small quotients (e.g., $\Z^{\omega}$).
\item If $G$ is infinite and has either small quotients or small torsion, then $G$ has unbounded exponent.
\end{enumerate}
\end{rem}

\begin{defn}
\label{def_R-group}
We say that a group $G$ is an \emph{$R$-group}\footnote{For a discussion on the name, see \cite[page 2]{Baumslag}.}  if it has unique roots: for all $1 \le n < \omega$ and $x,y \in G$, if $x^{n}=y^{n}$ then $x=y$. 
\end{defn}

\begin{rem}
If $G$ is an $R$-group, then $G$ is torsion-free.
\end{rem}

Examples of $R$-groups include torsion-free nilpotent groups, see \cite[Corollary 2.5]{MR1670603}. See also \cite{MR0122859}.

\begin{rem}
\label{small_quotients_or_torsion_are_elementary_properties}
The properties of having small quotients and having small torsion are  elementary properties of abelian groups. That is, if $G_1$ and $G_2$ are abelian groups such that $G_1 \equiv G_2$, then $G_1$ has this property iff $G_2$ has it. 
Similarly, being an $R$-group is an elementary property of groups.%This is because each of these properties can be described by an infinite conjunction of infinite disjunctions of first-order sentences in the language of groups.
\end{rem}

In the context of finite $U$-rank, having small torsion and having small quotients amounts to the same thing. For more on $U$-rank, see e.g., \cite[Section 8.6]{TentZieglerBook}.
\begin{lem}
\label{for_ab_group_of_fin_U_rk_small_torsion_equiv_small_quotients}
Let $\left(G,+\right)$ be an abelian group of finite $U$-rank. 
$G$ has small torsion if and only if $G$ has small quotients.
\end{lem}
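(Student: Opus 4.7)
The plan is to work with the definable endomorphism $\mu_n \colon G \to G$ given by $\mu_n(x) = nx$. This is a definable group homomorphism whose kernel is exactly $G[n] := \set{g \in G}{ng = 0}$ and whose image is exactly $nG$. So controlling the two sizes $\pipes{G[n]}$ and $\squareb{G:nG}$ reduces to controlling the $U$-ranks of these two subgroups, thanks to the short exact sequence
\[
0 \to G[n] \to G \xrightarrow{\mu_n} nG \to 0.
\]

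The key tool is Lascar's (in)equality: for a definable surjective homomorphism of groups of finite $U$-rank, the $U$-rank of the source equals the sum of the $U$-ranks of the kernel and the image. Applied to $\mu_n$, this yields
\[
U(G) \;=\; U(G[n]) + U(nG).
\]
I would quote this from any standard source on stable group theory (for example Pillay's \emph{Geometric Stability Theory}, or Wagner's book on simple theories), noting that finite $U$-rank is what makes the inequality an equality.

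Next I would record two standard consequences of finite $U$-rank. First, a definable set is finite if and only if its $U$-rank is $0$; so $G[n]$ is finite iff $U(G[n]) = 0$. Second, a definable subgroup $H \le G$ (with $G$ of finite $U$-rank) has finite index iff $U(H) = U(G)$: one direction is immediate since cosets of $H$ are all in $U$-rank-preserving bijection with $H$; the other follows because infinitely many cosets would give infinitely many pairwise disjoint definable sets of $U$-rank $U(H)$ inside $G$, contradicting $U(G) = U(H)$ together with the additivity/Lascar bounds. Applied to $H = nG$, this gives $\squareb{G:nG} < \omega$ iff $U(nG) = U(G)$.

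Combining, $\squareb{G:nG}$ is finite iff $U(nG) = U(G)$ iff $U(G[n]) = 0$ (by the displayed equation) iff $G[n]$ is finite. Since this equivalence is uniform in $n$, $G$ has small quotients iff $G$ has small torsion, which is the claim. The only delicate point is invoking Lascar's equality correctly — in particular making sure finite $U$-rank of $G$ transfers to the definable subgroups $G[n]$ and $nG$ — but this is routine once the above exact sequence is in hand.
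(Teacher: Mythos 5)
Your proof is correct and takes essentially the same route as the paper: both hinge on the Lascar equality $U(G) = U(G[n]) + U(nG)$ (the paper applies it once more directly to $G/nG$ via the subgroup form $U(G) = U(G/nG) + U(nG)$ and then equates, whereas you argue separately that finite index is equivalent to $U(nG) = U(G)$, but that separate argument is itself just the Lascar inequality for $G/nG$ in disguise).
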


\begin{proof}
By the Lascar inequalities for groups (see \cite[Corollary 6.3]{MR1827833}), for each $ n < \omega$, $U \left( G \right) = U \left( G/nG \right) + U \left( nG \right)$.
Let $\phi_{n} : G \to G$ be the homomorphism defined by $\phi_{n} \left( g \right) := ng$.
Then, again by the Lascar inequalities for groups (and the fact that $U$-rank is preserved under $0$-definable bijections by  the Lascar inequalities for types) %see Kim bottom of page 38
$U \left( G \right) = U \left( \ker \phi_{n} \right) + U \left( nG \right)$. Hence $ U \left( G/nG \right) = U \left( \ker \phi_{n} \right)$.
Finally, $G$ has small torsion iff $ U \left( \ker \phi_{n} \right) = 0$ for all $n$, and $G$ has small quotients iff $U \left( G/nG \right) = 0$ for all $n$.
\end{proof}

Recall that the \emph{divisible hull} of a torsion-free abelian group $A$ is the tensor product $\hat{A} := \Q \otimes_{\Z} A$. $\hat{A}$ is a divisible abelian group, and $A$ canonically embeds into $\hat{A}$ via $a \mapsto  1 \otimes a$. Abusing notation, we consider $A$ as a subgroup of $\hat{A}$.

Recall that the rank of a torsion-free abelian group is the dimension of its divisible hull as a vector space over $\Q$.

\begin{fact}[{\cite[Theorem 0.1]{Arnold:82}}]
\label{fact_torsion_free_abelian_group_of_finite_rank_has_small_quotients}
A torsion-free abelian group of finite rank has small quotients.
\end{fact}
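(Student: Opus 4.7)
The plan is to reduce to prime powers via a Chinese remainder argument and then bound $|A/p^{k}A|$ through tensoring with the $p$-adic integers $\Z_{p}$.

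\textbf{Reduction step.} For any abelian group $G$ and coprime $a,b$, Bezout's identity $au+bv=1$ gives $G/abG \cong G/aG \oplus G/bG$ (surjectivity because $aG+bG=G$; injectivity because $aG\cap bG = abG$). Iterating, $A/nA \cong \bigoplus_{p^{k}\,\|\,n} A/p^{k}A$, so it suffices to prove that $A/p^{k}A$ is finite for every prime $p$ and every $k\ge 1$.

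\textbf{Main step.} Set $B := A \otimes_{\Z} \Z_{p}$. Because $\Z_{p}$ is flat over $\Z$ (being torsion-free over a PID), tensoring the embedding $A\hookrightarrow\hat{A}\cong\Q^{r}$ gives an embedding $B \hookrightarrow \Q^{r}\otimes_{\Z}\Z_{p} = \Q_{p}^{r}$, so $B$ is a torsion-free $\Z_{p}$-submodule of $\Q_{p}^{r}$, and $B/p^{k}B \cong A/p^{k}A$. The key claim is $\dim_{\mathbb{F}_{p}} B/pB \le r$. If $b_{1},\dots,b_{s}\in B$ lift an $\mathbb{F}_{p}$-linearly independent family in $B/pB$, then from any $\Z_{p}$-relation $\sum c_{i} b_{i} = 0$, reduction mod $p$ forces each $c_{i}\in p\Z_{p}$; dividing by $p$ (legitimate since $B$ is $p$-torsion-free) produces another such relation, and iterating puts $c_{i}\in\bigcap_{n} p^{n}\Z_{p} = 0$. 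Thus $b_{1},\dots,b_{s}$ are $\Z_{p}$-linearly independent in $\Q_{p}^{r}$; since any $\Q_{p}$-relation rescales to a nontrivial $\Z_{p}$-relation, they are also $\Q_{p}$-independent, giving $s\le r$. Finally, the filtration $B\supseteq pB\supseteq\dots\supseteq p^{k}B$ has successive quotients $p^{i}B/p^{i+1}B \cong B/pB$ via multiplication by $p^{i}$ (an isomorphism by $p$-torsion-freeness of $B$), so $|B/p^{k}B|\le p^{rk}$, which is finite. Combined with the reduction step, $|A/nA|\le n^{r}$ for every $n\ge 1$.

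\textbf{Anticipated difficulty.} The one delicate point is the passage from $\mathbb{F}_{p}$-linear independence in $B/pB$ to $\Q_{p}$-linear independence inside $\Q_{p}^{r}$, which rests on a Nakayama-style lift and on the fact that $B$ inherits $p$-torsion-freeness from its embedding in $\Q_{p}^{r}$. If one prefers to avoid $p$-adic machinery, an induction on $r$ also works: pick a nonzero $\Q$-linear functional $\pi:\hat{A}\to\Q$ that is nonzero on $A$, and apply the right-exact functor $-\otimes_{\Z}\Z/n\Z$ to the short exact sequence $0\to\ker(\pi|_{A})\to A\to\pi(A)\to 0$, reducing to the rank-one case, where $A\hookrightarrow\Q$ is handled by classifying subgroups of $\Q$.
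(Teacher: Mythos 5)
Your proof is correct but takes a genuinely different route from the paper. The paper's argument is a short exhaustion: for any $b_1,\dotsc,b_\ell \in A$ with $B := \langle b_1,\dotsc,b_\ell\rangle$, the number of distinct cosets $b_i + mA$ is at most the number of distinct cosets $b_i + mB$ (since $mB \subseteq mA$), which is at most $\lvert B/mB\rvert \le m^d$ because $B$ is free of rank at most $d$; as $\ell$ was arbitrary, $\lvert A/mA\rvert \le m^d$. Your proof instead splits $n$ into prime powers via CRT and, for each prime $p$, passes to $B = A\otimes_{\Z}\Z_p \hookrightarrow \Q_p^r$, bounds $\dim_{\mathbb{F}_p}B/pB \le r$ by a Nakayama-style lifting of independence, and climbs the filtration $B \supseteq pB \supseteq \dotsb$. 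Both yield the same quantitative bound $\lvert A/nA\rvert \le n^r$, but the paper's argument is shorter and entirely elementary (no tensor products, flatness, or $p$-adics), whereas yours carries more structural information, isolating the contribution of each prime. The inductive alternative you sketch at the end is closer in spirit to the paper's reduction, though as written it still needs the rank-one base case (that subgroups of $\Q$ satisfy $\lvert A/nA\rvert \le n$) to be spelled out. A minor point: ``surjectivity because $aG+bG=G$'' compresses the CRT lift $g = bvg_1 + aug_2$, but the isomorphism $G/abG \cong G/aG \oplus G/bG$ for coprime $a,b$ is of course correct.
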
 
For the convenience of the reader we provide a proof (which is different from the one in \cite[Theorem 0.1]{Arnold:82}).
\begin{proof}
We will prove a stronger statement: 
\begin{claim}
If $A$ is a subgroup of $\Q^d$, then, for every $1 \leq m \in \N$,
$\pipes{A/mA} \leq m^d$.
\end{claim}
\begin{proof}[Proof of claim]
If $A$ is finitely generated, 
%It is an easy consequence of the fact that, if $A$ is a finitely generated subgroup of $\Q^d$, 
then $A \cong \Z^{d'}$ for some $d' \leq d$, and the claim follows.
In general, let $b_1, \dotsc, b_{\ell} \in A$ and let $B$ be the subgroup of $A$ generated by $b_1, \dotsc, b_{\ell}$.
Then, 
\[
\pipes{\curlyb{b_1/ mA, \dotsc, b_{\ell}/ mA}} \leq \pipes{\curlyb{b_1/ mB, \dotsc, b_{\ell}/ mB}} \leq \pipes{B/ mB} \leq m^d,
\]
proving the claim.
\qed\qedhere \end{proof}
\renewcommand{\qedsymbol}{}\end{proof}

\subsection{Weakly-minimal and 1-based theories} \label{subs:weak minimality}

\begin{defn}
A (complete\footnote{In this paper, when we say ``theory'', we mean a complete one.}) theory $T$ is called \emph{weakly-minimal} if $T$ is stable and has $U$-rank $1$. A structure is \emph{weakly minimal} if its complete theory is. 
\end{defn}

The following fact is a characterization of weakly-minimal theories in terms of definable sets. It must be well-known to experts, but we did not find it stated quite like this. The characterization follows from and is similar to \cite[Theorem 21]{WagnerQOM}, where it is proved that $T$ is weakly-minimal iff in a monster model of $T$, every formula is equivalent to a Boolean combination of $M$-definable sets and finite sets.
\begin{fact}
\label{weakly_minimal_equiv_conditions}
Let $T$ be a theory with monster model $\cM$. The following are equivalent: 
\begin{enumerate}
\item $T$ is weakly-minimal.
\item \label{weakly_minimal_equiv_conditions_intersection_every_model} For every model $M \models T$ and every non-algebraic formula $\phi \left( x,b \right)$ with $\left| x \right| = 1$ and $b \in \cM$,  we have $\phi \left( M,b \right) \neq \emptyset$. 
\item \label{weakly_minimal_equiv_conditions_intersection_some_model} There is a small model $M \models T$ such that, for every non-algebraic formula $\phi \left( x,b \right)$ with $\left| x \right| = 1$ and $b \in \cM$,  we have $\phi \left( M,b \right) \neq \emptyset$. 
\item For every model $M \models T$ and every $a,b \in M$ and $C \subseteq M$, if $a,b \notin \acl^{\eq}\left( C \right)$ and $\tp \left( a /  \acl^{\eq}\left( \emptyset \right)  \right) =
\tp \left( b /  \acl^{\eq}\left( \emptyset \right)  \right)$ 
then
$\tp \left( a /  \acl^{\eq}\left( C \right)  \right) =
\tp \left( b /  \acl^{\eq}\left( C \right)  \right)$. 
\end{enumerate}
\end{fact}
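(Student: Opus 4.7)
The plan is to prove (1) $\Leftrightarrow$ (4) as the structural backbone and to run the cycle (1) $\Rightarrow$ (2) $\Rightarrow$ (3) $\Rightarrow$ (1) for the realization-style conditions; the implication (2) $\Rightarrow$ (3) is immediate by taking any small $M$ as witness.

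For (1) $\Leftrightarrow$ (4), the central observation is that in a stable theory of $U$-rank $1$, every non-algebraic $1$-type over $\acl^{eq}(C)$ is the unique non-forking extension of its restriction to $\acl^{eq}(\emptyset)$. Uniqueness is stationarity of types over algebraically closed sets, a general feature of stable theories; the non-forking character is forced by the $U$-rank bound, since a forking extension of a $U$-rank $1$ type must drop to $U$-rank $0$ and hence become algebraic. Under (1), if $a, b \notin \acl^{eq}(C)$ share a type over $\acl^{eq}(\emptyset)$, then $tp(a/\acl^{eq}(C))$ and $tp(b/\acl^{eq}(C))$ both coincide with this unique non-forking extension, hence are equal, giving (4). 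Conversely, (4) rules out any forking extension of a non-algebraic $1$-type to another non-algebraic type (else two distinct extensions would share a common restriction over $\acl^{eq}(\emptyset)$), and it rules out the order property in $1$ variable, since an order pattern would produce incompatible non-algebraic $1$-types over some $\acl^{eq}(C)$ with identical restrictions to $\acl^{eq}(\emptyset)$; together these yield $U$-rank $1$ and stability.

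For (1) $\Rightarrow$ (2), given a non-algebraic $\phi(x,b)$ with $b \in \cM$ and a model $M \models T$, the plan is to pick a realization $a_0 \models \phi(x,b)$ independent from $M$ over $\emptyset$, so that $a_0 \notin \acl(Mb)$ by $U$-rank $1$, and then use (4) (already available) to conclude that $tp(a_0/Mb)$ is determined by $tp(a_0/\acl^{eq}(\emptyset))$. A Vaughtian-pair-free argument specific to $U$-rank $1$ (exploiting that a non-algebraic formula with monster parameters whose trace in $M$ is empty would force a forbidden dimension drop) then produces $a_1 \in M$ with $\phi(a_1, b)$. For (3) $\Rightarrow$ (1) I would argue contrapositively: any instability witness or forking chain of length $\geq 2$ can be transported along an indiscernible sequence to produce a non-algebraic $\phi(x, b)$ with $b$ placed sufficiently far from $M$ and with $\phi(M,b) = \emptyset$, contradicting (3) for the fixed small $M$.

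The hardest step will be (1) $\Rightarrow$ (2): establishing the strong Vaughtian-pair-freeness that every non-algebraic formula with monster parameters meets every model (not just saturated ones). Naively appealing to (4) is not enough, because a non-algebraic $1$-type over $\acl^{eq}(\emptyset)$ need not be realized in an arbitrary model (witness the generic type of $\mathrm{ACF}_0$, which is not realized in $\overline{\Q}$). The proof must instead combine the $U$-rank $1$ hypothesis with definability of types in stable theories to secure the required witness $a_1 \in M$.
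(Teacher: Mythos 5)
Your decomposition is genuinely different from the paper's: you want to prove $(1) \Leftrightarrow (4)$ directly and separately run $(1) \Rightarrow (2) \Rightarrow (3) \Rightarrow (1)$, whereas the paper proves $(1) \Rightarrow (4) \Rightarrow (3)$ and lets $(3) \Rightarrow (1)$ close the loop. The paper's choice of $(4) \Rightarrow (3)$ rather than $(4) \Rightarrow (1)$ is strictly easier: one takes a small $\left| L \right|^{+}$-saturated model $M$, picks a realization $a \models \phi(x,b)$ with $a \notin \acl^{eq}(b)$, realizes in $M$ the partial type $tp(a/\acl^{eq}(\emptyset)) \cup \{ x \neq d : d \in \acl^{eq}(b) \}$, and then $(4)$ forces the realization to satisfy $\phi$. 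Your direct $(4) \Rightarrow (1)$ can be made to work, but the sketch elides two points you would need to nail down: for the stability half, you must arrange that the two realizations of the common type over $\acl^{eq}(\emptyset)$ that witness the order property lie \emph{outside} $\acl^{eq}(C)$ (achievable by stretching the indiscernible sequence to a dense order and using a density argument to show the $a_i$ are non-algebraic over the $b_j$'s, but this is not automatic); for the $U$-rank half, you need that a non-forking extension of a non-algebraic type is non-algebraic, which is true but requires an argument (e.g.\ that $c \ind_A B$ and $c \in \acl(B)$ force $c \in \acl(A)$). The paper avoids both subtleties.

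The more serious issue is $(1) \Rightarrow (2)$, which you correctly identify as the hardest step and then leave open. Your Vaughtian-pair framing is the wrong lens: the clean argument is that in a stable theory, if $\phi(x,b)$ has $U$-rank $1$ it does not fork over $\emptyset$, hence does not fork over the model $M$, hence extends to a global non-forking type over $M$, and in a stable theory a global type that does not fork over a model $M$ is a coheir of its restriction to $M$, i.e.\ finitely satisfiable in $M$. So $\phi(M,b) \neq \emptyset$. You gesture at ``definability of types'' which is in the right circle of ideas (over a model, non-forking $=$ definable $=$ coheir $=$ heir), but the precise fact needed is finite satisfiability of non-forking extensions over models, and your plan does not actually deploy it. As written, this step is a gap, not merely a postponed computation.

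Your $(3) \Rightarrow (1)$ is fine: the $U$-rank part (transport a dividing witness along an $M$-indiscernible sequence with the same EM-type) is exactly the paper's argument. For stability you propose to transport an order-property witness; this does work --- if $(a_i b_i)_{i<\omega}$ is $M$-indiscernible with $\phi(a_i,b_j) \iff i<j$, then $\phi(x,b_1) \wedge \neg\phi(x,b_0)$ is non-algebraic, and any realization $a \in M$ would, by indiscernibility over $M$, satisfy both $\phi(a,b_1)$ and $\neg\phi(a,b_1)$ --- and it is a somewhat more elementary alternative to the paper's type-counting argument (which bounds $|S_1(N)|$ by the number of $M$-coheirs plus $|N|$).
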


\begin{proof}
We show (1) $\implies$ (2) $\implies$ (3) $\implies$ (1) and (1) $\implies$ (4) $\implies$ (3). 

For (1) $\implies$ (2): Since $U \left( T \right ) = 1$ and $\phi \left( x,b \right)$ is non-algebraic, $\phi \left( x,b \right)$ does not fork over $0$, and so does not fork over $M$. Thus, we can extend $\phi \left( x,b \right)$ to a global type $p$ that does not fork over $M$. By stability, $p$ is finitely satisfiable in $M$, so $\phi \left( x,b \right)$ is realized in $M$.

(2) $\implies$ (3) is trivial. 

For (3) $\implies$ (1): To show that $U \left( T \right ) = 1$ it is enough to show that every formula $\phi \left( x,b \right)$ with $\left| x \right| = 1$ and $b \in \cM$ which divides over $\emptyset$, is algebraic. Therefore, suppose $\phi \left( x,b \right)$ is not algebraic. Let $\left( b_i \right)_{i < \omega}$ be a sequence indiscernible over $\emptyset$ such that $b_0 = b$ and such that $ \set{\phi \left( x,b_i \right)}{i < \omega} $ is inconsistent. Let $M$ be given by (3), and let  $\left( c_i \right)_{i < \omega}$ be a sequence indiscernible over $M$ with the same EM-type over $\emptyset$ as $\left( b_i \right)_{i < \omega}$. In particular, $ \set{\phi \left( x,c_i \right)}{i < \omega} $ is inconsistent, and $\phi \left( x,c_0 \right)$ is not algebraic. By (3), there is $a \in M$ realizing $\phi \left( x,c_0 \right)$. By indiscernibility over $M$, $a$ realizes $ \set{\phi \left( x,c_i \right)}{i < \omega} $, a contradiction.

To demonstrate stability, let $M$ be given by (3) and denote $\lambda := \left| M \right|$, $\kappa := 2^{ 2^{ \lambda } }$. Here, we count $1$-types over sets of size $\le \kappa$. Let $A \subseteq \cM$ such that $\left| A \right| \le \kappa$. Let $N \prec \cM$ be of size at most $\kappa$ such that $A \subseteq N$ and $M \subseteq N$ (so $M \prec N$). By (3), every non-algebraic type over $N$ is finitely satisfiable in $M$. Thus, the number of non-algebraic types over $N$ is bounded by the number of types over $N$ that are coheirs over $M$, which is bounded by $2^{ 2^{ \lambda } } = \kappa$. The number of algebraic types over $N$ is $\left| N \right| \le \kappa$. Together we get $ \left| S_1 \left( A \right) \right| \le \left| S_1 \left( N \right) \right| \le \kappa$. Therefore $T$ is stable.

For (1) $\implies$ (4): Let $M,C,a,b$ be as in (4). Let $p := \tp \left( a /  \acl^{\eq}\left( \emptyset \right)  \right) =
\tp \left( b /  \acl^{\eq}\left( \emptyset \right)  \right)$.
By stability, $p$ has a unique extension to a type $q$ over $\acl^{\eq}\left( C \right)$ which does not fork over $\acl^{\eq}\left( \emptyset \right)$. Since $a \notin \acl^{\eq}\left( C \right)$, and since $U \left( T \right ) = 1$, we get that $\tp \left( a /  \acl^{\eq}\left( C \right) \right)$ does not fork over $\emptyset$ and hence does not fork over $\acl^{\eq}\left( \emptyset \right)$. By the uniqueness of $q$, $\tp \left( a /  \acl^{\eq}\left( C \right) \right) = q$. Identically, $\tp \left( b /  \acl^{\eq}\left( C \right) \right) = q$, thus these types are equal.

For (4) $\implies$ (3): Let $M$ be a small $\left| L \right|^+$-saturated model. We claim that $M$ is as required. Let $\phi \left( x,b \right)$ be as in (3). Note that $\left| \acl^{\eq}\left( b \right) \right| \le \left| L \right|$, so, since $\phi \left( x,b \right)$ is non-algebraic, there exists $a \in \cM$ such that $\cM \models \phi \left( a,b \right)$ and $a \notin \acl^{\eq} \left( b \right)$. Let
\[
Q := \tp \left( a /  \acl^{\eq}\left( \emptyset \right)  \right) 
\cup
\set{x \neq d}{d \in \acl^{\eq} \left( b \right) }
\]
Thus, $Q$ is a partial type over $\acl^{\eq} \left( b \right)$, which is of size at most $\left| L \right|$, hence it has a realization $a' \in M$. So $a' \notin \acl^{\eq} \left( b \right)$ and
$\tp \left( a' /  \acl^{\eq}\left( \emptyset \right)  \right) =
\tp \left( a /  \acl^{\eq}\left( \emptyset \right)  \right)$. By (4), we obtain 
$\tp \left( a' /  \acl^{\eq}\left( b \right)  \right) =
\tp \left( a /  \acl^{\eq}\left( b \right)  \right)$, and in particular, $\cM \models \phi \left( a',b \right)$.
\end{proof}

\begin{defn}
\begin{enumerate}
\item A subset $X \subseteq M^k$ of a structure $M$ is called \emph{almost $0$-definable} if it is definable over $\acl^{\eq}\left( \emptyset \right)$.
\item Two structures $M$ and $N$ with the same universe are called \emph{almost $0$-interdefinable} if $N$ is an $\acl_M^{\eq}\left( \emptyset \right)$-reduct of $M$ and $M$ is an $\acl_N^{\eq}\left( \emptyset \right)$-reduct of $N$
\end{enumerate}
\end{defn}

\begin{defn}
A structure $M$ $0$-expanding an abelian group $\left( M,+ \right)$ is called a \emph{quasi-abelian structure} if, for every $k$, every definable subset $X \subseteq M^k$ is a Boolean combination of cosets of almost $0$-definable subgroups of $M^k$.
\end{defn}

\begin{rem}
\label{pure_abelian_group_is_quasi_abelian}
Every abelian group in the language of groups is quasi-abelian. This follows from quantifier elimination up to pp formulas.
\end{rem}

\begin{fact}[{\cite[Proposition 2.1]{HL2010}}]
\label{fact_when_is_a_quasi_abelian_structure_weakly_minimal}
Let $M$ be a quasi-abelian structure. $M$ is weakly-minimal if and only if every definable subgroup of $M$ is either finite or of finite index. This remains true if we replace ``definable'' with ``almost $0$-definable''.
\end{fact}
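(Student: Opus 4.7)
The strategy is to prove both directions via $U$-rank computations for definable groups in a stable setting.

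For the forward direction, I would assume $M$ is weakly-minimal, so $M$ is stable with $U(M)=1$. For any definable subgroup $H\leq M$, the Lascar inequality applied to the short exact sequence $0\to H\to M\to M/H\to 0$ (with equality for definable subgroups of stable groups, as in \cite[Corollary 6.3]{MR1827833}) gives $U(H)+U(M/H) = U(M) = 1$. Since both summands lie in $\{0,1\}$ and a definable group of $U$-rank $0$ is finite, either $H$ is finite or $M/H$ is finite, i.e.\ $[M:H]<\omega$. Note this already yields the stronger conclusion that \emph{every} definable (not just almost $0$-definable) subgroup is finite or of finite index, which is what is needed to justify the ``moreover'' part of the statement in one direction.

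For the reverse direction, assume every almost $0$-definable subgroup of $M$ is finite or of finite index. I would first observe that a quasi-abelian structure is automatically $1$-based and in particular stable: every definable set being a boolean combination of cosets of almost $0$-definable subgroups directly supplies canonical bases and eliminates the order property on groups. It remains to bound $U(M)\leq 1$. Let $p\in S_1(B)$ be non-algebraic. By the quasi-abelian decomposition, each formula in $p$ is a boolean combination of cosets of almost $0$-definable subgroups $H_1,\dots,H_n$; since $p$ is non-algebraic, it cannot concentrate on any coset of a finite $H_i$. Hence only cosets of the finite-index $H_i$ carry nontrivial information about $p$. These finite-index subgroups are almost $0$-definable, and the particular cosets in which $p$ lives are determined over any set containing one representative of each, i.e.\ already over a small subset of $B$.

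From here I would conclude that any extension of $p$ to a larger parameter set either agrees on all finite-index cosets — in which case it is determined by $p$ together with the algebraic information about the finite cosets and is therefore its unique non-forking extension — or else breaks a coset constraint, in which case the extension is algebraic. This forces $U(p)\leq 1$ and hence $U(M)\leq 1$, finishing weak-minimality. Combining the two directions proves the equivalence for ``definable'' subgroups via the forward direction, and for ``almost $0$-definable'' subgroups via the reverse direction (using that almost $0$-definable subgroups are a fortiori definable).

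The main obstacle is the reverse direction, specifically verifying that a non-algebraic $1$-type is genuinely controlled by cosets of almost $0$-definable (rather than arbitrary $B$-definable) finite-index subgroups. The subtlety is that a $B$-definable coset is a translate $b+H$ of an almost $0$-definable subgroup $H$, and one must check that such translates do not produce additional forking extensions beyond the generic of the coset they determine — this is exactly what $1$-basedness of the quasi-abelian structure buys, and where I would invoke it most heavily.
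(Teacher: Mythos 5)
This statement is cited from \cite[Proposition 2.1]{HL2010}; the paper does not prove it, so there is no internal proof to compare against. Evaluating your proposal on its own merits: the forward direction via Lascar's equality for definable subgroups of a superstable group is correct and is the standard argument, and you are right that it delivers the conclusion for arbitrary definable subgroups (from which the ``almost $0$-definable'' version follows).

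The reverse direction has the right skeleton but contains two soft spots. First, the assertion that a quasi-abelian structure is automatically stable (and $1$-based) should not be waved through; it is true, but it needs an argument or a citation, and you then use it as a load-bearing premise. Second, the dichotomy you propose --- ``any extension of $p$ either agrees on all finite-index cosets, ... or else breaks a coset constraint, in which case the extension is algebraic'' --- is misphrased: an extension of $p$ cannot break a constraint that $p$ already imposes, it can only impose new ones. The correct and cleaner observation, which also dissolves the ``subtlety'' you flag at the end, is this: if $H$ is an almost $0$-definable subgroup of finite index, then $H$ has only finitely many cosets, so each coset has a code in $\acl^{eq}(\emptyset)$ and is therefore itself $\acl^{eq}(\emptyset)$-definable. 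So after harmlessly extending $B$ to contain $\acl^{eq}(\emptyset)$ (an algebraic, hence non-forking, extension), a non-algebraic $p\in S_1(B)$ already determines all of its finite-index coset data, and any formula in any non-algebraic extension $q\in S_1(C)$ of $p$ differs from the $\acl^{eq}(\emptyset)$-definable set cut out by that coset data only by a finite set. Hence $q$ is the unique non-algebraic extension of $p$ to $C$ and does not fork, giving $U(p)\le 1$. This argument uses only the quasi-abelian decomposition directly; no appeal to $1$-basedness is needed, and invoking it there is a detour rather than a help.
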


\begin{fact}[{\cite[Proposition 3.1]{ConantLaskowski2020}}]
\label{abelian_group_with_small_quotients_and_small_torsion_is_weakly_minimal}
An abelian group with small quotients and small torsion is weakly-minimal.
\end{fact}

Combining this with \cref{fact_torsion_free_abelian_group_of_finite_rank_has_small_quotients} we get:

\begin{cor}
\label{cor_torsion_free_abelian_group_of_finite_rank_is_weakly_minimal}
If $M = \roundedb{G,+}$ is a torsion-free abelian group of finite rank, then $M$ is weakly-minimal.
\end{cor}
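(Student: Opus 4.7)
The plan is essentially a one-line chaining of two previously established results, so there is no substantive obstacle; I simply need to apply the two facts in the correct order.

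First, I would invoke Fact~\ref{fact_torsion_free_abelian_group_of_finite_rank_has_small_quotients}, which tells us that since $G$ is a torsion-free abelian group of finite rank, $G$ has small quotients, i.e., $[G : nG]$ is finite for every $1 \leq n < \omega$. This hypothesis is exactly what is needed to feed into the next step.

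Then I would apply Corollary~\ref{abelian_group_with_small_quotients_is_weakly_minimal} directly to $M = (G,+)$, concluding that $M$ is weakly-minimal. That corollary was proved by combining quantifier elimination for abelian groups (to see that $(G,+)$ is a quasi-abelian structure whose only definable subgroups are $\{0\}$ and the $mG$) with Fact~\ref{fact_when_is_a_quasi_abelian_structure_weakly_minimal}. Here the small-quotients assumption ensures each $mG$ has finite index, so every definable subgroup is either finite (namely $\{0\}$, using torsion-freeness) or of finite index, matching the hypothesis of Fact~\ref{fact_when_is_a_quasi_abelian_structure_weakly_minimal}.

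Since both invoked results are already stated and proved in the excerpt, there is no hard step; the corollary is immediate from the composition. I would simply write: \emph{By Fact~\ref{fact_torsion_free_abelian_group_of_finite_rank_has_small_quotients}, $G$ has small quotients, and the conclusion follows from Corollary~\ref{abelian_group_with_small_quotients_is_weakly_minimal}.}
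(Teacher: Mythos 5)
Your proof is correct and matches the paper's argument exactly: the paper obtains this corollary by combining \cref{fact_torsion_free_abelian_group_of_finite_rank_has_small_quotients} with \cref{abelian_group_with_small_quotients_is_weakly_minimal}, precisely as you do.
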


We denote by $\ind$ the ternary relation of forking independence.

\begin{defn}
A stable theory $T$ with monster model $\cM$ is called \emph{1-based} if for every $A,B \subseteq \cM^{\eq}$ we have 
$A \underset{\acl^{\eq} \left( A \right) \cap \acl^{\eq} \left( B \right)}{\ind} B$.
A stable structure is called \emph{1-based} if its theory is 1-based.
\end{defn}

\begin{rem}
In the above definition, it is enough to consider only finite sets $A,B \subseteq \cM^{\eq}$. %Indeed, by base monotonicity and finite character, it is enough to prove that for any finite $A$ and any $B$, $A \underset{\acl^{\eq}(A) \cap \acl^{\eq}(B)}{\ind}{B}$. By symmetry, it is enough to prove that for any finite $A$ and any $B$, $B \underset{\acl^{\eq}(A) \cap \acl^{\eq}(B)}{A}$, and this follows from the same fact whenever finite $B$ by base monotonicity. 
\end{rem}

\begin{fact}[{\cite[Theorem 4.1]{HP1987}}]
\label{HP_one_based_implies_quasi_abelian}
Let $M$ be a stable structure $0$-expanding an abelian group $\left( M,+ \right)$. Then $M$ is 1-based if and only if it is quasi-abelian.
\end{fact}

\begin{rem}
\label{pure_abelian_group_is_1_based}
In particular, by \cref{pure_abelian_group_is_quasi_abelian}, every abelian group in the language of groups is 1-based.
\end{rem}

\begin{fact}[{\cite[Theorem 2.1]{Loveys1990}}]
\label{Loveys_on_weakly_minimal_one_based}
Let $M$ be a weakly-minimal and 1-based structure $0$-expanding an abelian group $\left( M,+ \right)$ of unbounded exponent. Let $R$ be the ring of all endomorphisms of $\left( M,+ \right)$ that are definable in $M$, and let $M' := \left( M,+, f : f \in R \right)$. Then $M$ and $M'$ are interdefinable.
\end{fact}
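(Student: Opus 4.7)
The direction ``every $M'$-definable set is $M$-definable'' is immediate from the definition of $R$, so the task is to show the converse: every $M$-definable set is already $M'$-definable. My first move is to invoke \cref{HP_one_based_implies_quasi_abelian}: since $M$ is stable, $1$-based, and $0$-expands the abelian group $(M,+)$, it is quasi-abelian, so every $M$-definable subset of $M^k$ is a boolean combination of cosets of almost $0$-definable subgroups of $M^k$. As cosets of an $M'$-definable subgroup are trivially $M'$-definable, this reduces the theorem to showing that every almost $0$-definable subgroup $H \leq M^k$ is $M'$-definable.

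I next reduce to $k=2$ via modularity. In a $1$-based stable group, the connected component $H^{0}$ of an almost $0$-definable subgroup $H \leq M^k$ is the intersection of the pullbacks of its projections to coordinate pairs, and the quotient $H/H^{0}$ is finite and lives in $\acl^{eq}(\emptyset)$. Combined with weak-minimality --- which via \cref{fact_when_is_a_quasi_abelian_structure_weakly_minimal} forces every almost $0$-definable subgroup of $M$ itself to be finite or of finite index --- this reduces matters to showing that every almost $0$-definable subgroup $H \leq M \times M$ is $M'$-definable.

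For $k=2$ I would analyse $H$ through its vertical kernel $K_v := \{ b \in M : (0,b) \in H \}$ and its first projection $\pi_1(H) \leq M$, each of which is almost $0$-definable and hence, by the $k=1$ observation above, finite or of finite index. The degenerate configurations ($K_v$ of finite index, or $\pi_1(H)$ finite) present $H$ directly as a boolean combination of cosets of subgroups already definable in $(M,+)$ and of $M'$-definable subgroups coming from $R$. The critical configuration is $K_v$ finite and $\pi_1(H)$ of finite index: here $\pi_1|_H$ is finite-to-one with generic fibre a coset of $K_v$, so $H$ presents as a definable finite-to-one correspondence from $\pi_1(H)$ to $M$ which, modulo an $M'$-definable finite correction, I would aim to realise as the graph of an endomorphism $f \in R$.

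This promotion from a finite-to-one correspondence to a genuine endomorphism is the main obstacle. The plan is to exploit the unbounded exponent of $M$: the finite kernel $K_v$ has some exponent $N$, and since multiplication by $N$ lies in $R$, the kernels and images of its finite iterates are all $M'$-definable. Working through these, one should be able to trivialise the $K_v$-ambiguity and extract a definable group homomorphism from a finite-index subgroup of $\pi_1(H)$ into an $M'$-definable quotient of $M$. Combined with the weakly-minimal/$1$-based classification of almost $0$-definable subgroups of $M \times M$ up to finite index, this should yield a genuine $f \in R$ whose graph describes $H$ up to an $M'$-definable finite correction, completing the reduction and the theorem.
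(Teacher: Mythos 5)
First, a framing remark: the paper does not prove this statement. It is a \emph{Fact}, cited from \cite[Theorem 2.1]{Loveys1990}, and the remark following it only corrects a hypothesis (``abelian structure'' should read ``$1$-based'') and notes that the version stated here is weaker than Loveys's original. There is therefore no in-paper proof to compare against, and I assess your sketch on its own.

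Your opening is right: invoking \cref{HP_one_based_implies_quasi_abelian} to get quasi-abelianity and reducing to showing that almost-$0$-definable subgroups of $M^k$ are $M'$-definable is exactly how Loveys begins, and the endgame (extract endomorphisms from almost-$0$-definable subgroups of $M^2$) is also the correct one in spirit. But the reduction to $k=2$ rests on a false claim. You assert that for almost-$0$-definable $H \le M^k$ the connected component $H^0$ is the intersection of the pullbacks of its projections to coordinate pairs. This fails already in a $\Q$-vector space $V$ (strongly minimal, hence weakly minimal and $1$-based): take $H = \{(x,y,z) \in V^3 : x+y+z=0\}$; every projection $\pi_{ij}(H)$ is all of $V^2$, so the intersection of pullbacks is $V^3 \neq H = H^0$. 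What one-basedness actually buys is that acl-definable subgroups of $M^n$ are commensurable with pp-definable subgroups over the quasi-endomorphism ring --- but that is essentially the conclusion of the theorem, not a tool available before the ring has been built. In Loveys's own argument the reduction to bivariate data comes from constructing a ring of quasi-endomorphisms acting on the divisible hull $\Q \otimes M$ and then using pp-elimination for modules to control definable subgroups of all powers; there is no shortcut through coordinate-pair projections. Your $k=2$ analysis also skates over the genuinely hard step: multiplying through by the exponent of the finite vertical kernel produces a homomorphism from a finite-index subgroup of $M$ into a finite quotient, and showing that such a quasi-endomorphism is governed by $R$ up to $M'$-definable finite corrections is the technical heart of Loveys's proof, not a two-line consequence of unbounded exponent.
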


\begin{rem}
\begin{enumerate}
\item In \cite{Loveys1990} $M$ is assumed to be an ``abelian structure'' rather than ``1-based'', but it is the same (see \cite[below Theorem 1.3]{Loveys1990}). % , \cref{HP_one_based_implies_quasi_abelian} is applied to conclude that $M$ is quasi-abelian.

\item The formulation of \cref{Loveys_on_weakly_minimal_one_based} is weaker than that in \cite{Loveys1990} and immediately follows from it. 
\end{enumerate}
\end{rem}

%%%%
\begin{defn}\label{def:acl-independence}
    When $\acl$ satisfies exchange (and therefore defines a pregeometry), we denote by $\ind^{a}$ the ternary relation of $\acl$-independence: $A \ind^{a}_C B$ if for every finite tuple $a$ from $A$ we have $\dim \left( a / CB \right) = \dim \left( a / C \right)$.
\end{defn}

%\todo{this must be well-known, and there must be a reference for it, no?}
The following is folklore. %%, but we provide a proof. 
\begin{fact}
\label{u_rank_1_acl_facts}
Let $T$ be a (supersimple) theory of $SU$-rank $1$ (so in particular this applies when $T$ is weakly-minimal), with monster model $\cM$. Then:
\begin{enumerate}
\item $\acl$ satisfies exchange. 
\item For every (real) tuple $a \in \cM$ and $C \subseteq \cM$ we have $SU \left( a / C \right) = \dim \left( a / C \right)$, where $\dim \left( a / C \right)$ is the $\acl$-dimension of $a$ over $C$.
\item 
\label{u_rank_1_forking_indep_coincides_with_acl_indep}
For every (real) $A,B,C \subseteq \cM$ we have $A \ind_C B \iff A \ind^{a}_C B$, where $\ind^{a}$ is defined above.
\end{enumerate}
\end{fact}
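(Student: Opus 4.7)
The plan is to exploit the fact that in any $SU$-rank $1$ theory every single-element type has $SU$-rank $0$ or $1$, with $SU(a/C) = 0$ if and only if $a \in \acl(C)$. All three items then follow by lifting this dichotomy to tuples via additivity of $SU$-rank and symmetry of forking.

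For item $(1)$, exchange is a direct forking calculation. Suppose $a \in \acl(Cb) \setminus \acl(C)$; then $SU(a/Cb) = 0$ while $SU(a/C) = 1$, so $a \underset{C}{\nind} b$, and by symmetry of forking $b \underset{C}{\nind} a$. Note that $b \in \acl(C)$ is impossible, as it would give $\acl(Cb) = \acl(C)$ and contradict $a \notin \acl(C)$. Hence $SU(b/C) = 1$ and forking forces $SU(b/Ca) = 0$, i.e.\ $b \in \acl(Ca)$.

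For item $(2)$, I would induct on the length $n$ of the tuple $a = (a_1,\dotsc,a_n)$. The case $n = 1$ is exactly the dichotomy above. For the inductive step I would apply the Lascar chain rule
\[
SU(a_1 \dotsc a_n / C) = SU(a_1 \dotsc a_{n-1} / C a_n) + SU(a_n / C),
\]
which holds with equality because in finite $SU$-rank the Cantor-normal-form sum in the upper Lascar inequality collapses to ordinary addition. The analogous chain rule for pregeometric dimension follows from exchange established in item $(1)$, so both sides satisfy the same recursion and coincide by induction.

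Item $(3)$ is then immediate by unwinding the definitions: $A \underset{C}{\ind} B$ iff $SU(a/CB) = SU(a/C)$ for every finite tuple $a$ from $A$, which by item $(2)$ is equivalent to $\dim(a/CB) = \dim(a/C)$ for every such $a$, i.e.\ to $A \underset{C}{\ind}^{a} B$. The most subtle step is justifying the equality in the Lascar chain rule, but this is a standard property of supersimple theories whose $SU$-ranks are natural numbers; in the $SU$-rank $1$ setting it is in fact transparent, since both sides count the number of coordinates of the tuple that are generic over the given base.
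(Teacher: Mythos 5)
Your argument is correct and follows essentially the same path as the paper's: both exploit the $SU$-rank $1$ dichotomy for singletons ($SU(a/C) = 0 \iff a \in \acl(C)$) together with the equality form of the Lascar inequalities in finite rank, and then reduce item (3) to the rank characterization of nonforking. The only cosmetic differences are that you phrase exchange via symmetry of forking where the paper substitutes the two Lascar decompositions of $SU(ab/C)$ directly, and you prove item (2) by induction on tuple length whereas the paper picks a maximal $\acl$-independent subtuple at once; these are interchangeable presentations of the same additivity argument.
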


\subsection{Weak and geometric elimination of imaginaries} \label{subs:weak and geometric EI}

\begin{defn}
\begin{enumerate}
\item A structure $M$ has \emph{weak elimination of imaginaries} (WEI) if for every $e \in M^{\eq}$ there is a tuple $a \in M$ such that $e \in \dcl^{\eq} \left( a \right)$ and $a \in \acl^{\eq} \left( e \right)$.
\item A structure $M$ has \emph{geometric elimination of imaginaries} (GEI) if for every $e \in M^{\eq}$ there is a tuple $a \in M$ such that $e \in \acl^{\eq} \left( a \right)$ and $a \in \acl^{\eq} \left( e \right)$.
\item A theory $T$ has WEI (respectively, GEI) if every $M \models T$ has WEI (respectively, GEI).
\end{enumerate}
\end{defn}

\begin{rem}
Clearly, WEI implies GEI. Also, to check that a theory $T$ has WEI (respectively, GEI), it is sufficient to check it for a single $\left| T \right|^+$-saturated model of $T$.
\end{rem}

\begin{fact}
\label{weakly_minimal_and_acl0_is_a_model_implies_wei}
Let $T$ be a weakly-minimal theory, such that $N := \acl \left( \emptyset \right)$ is a model of $T$. Then $T$ has WEI.
\end{fact}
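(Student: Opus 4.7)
My plan is to prove WEI for an arbitrary imaginary $e \in \cM^{eq}$ by a dimension-lowering argument. Since $e$ is imaginary, $e \in \dcl^{eq}(c)$ for some real tuple $c$, and by \cref{u_rank_1_acl_facts} the quantity $\dim(a/e)$ (in the $\acl$-pregeometry, extended to allow the imaginary parameter $e$) is a non-negative integer bounded by $|a|$ for any real tuple $a$. I would therefore set
\[
d \;:=\; \min \bigl\{ \dim(a/e) \,:\, a \in \cM^{<\omega},\ e \in \dcl^{eq}(a) \bigr\}
\]
and aim to show $d = 0$, since then any minimizing $a$ satisfies $a \subseteq \acl(e) \subseteq \acl^{eq}(e)$ and, jointly with $e \in \dcl^{eq}(a)$, witnesses WEI for $e$.

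Assume for contradiction that $d \geq 1$, fix a minimizing tuple $a = (a_1, \dotsc, a_n)$ and a $\emptyset$-definable partial function $g$ with $e = g(a)$. The key step is to locate a coordinate $a_i$ such that $a_i \notin \acl(e, a_{\neq i})$. Once located, consider the formula
\[
\phi(x, a_i, a_{\neq i}) \;:=\; \bigl( g(x, a_{\neq i}) = g(a_i, a_{\neq i}) \bigr),
\]
which uses only real parameters from $\cM$ (this is essential, since \cref{weakly_minimal_equiv_conditions}(2) applies to formulas over the monster of reals). By the choice of $a_i$, its solution set $\{x : g(x, a_{\neq i}) = e\}$ contains $a_i$ and is infinite; hence $\phi$ is non-algebraic, so by \cref{weakly_minimal_equiv_conditions}(2) applied with $M = N = \acl(\emptyset)$ it is realized by some $a_i^{*} \in N$. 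The replacement tuple $a^{**} := (a_1, \dotsc, a_i^{*}, \dotsc, a_n)$ then satisfies $e = g(a^{**}) \in \dcl^{eq}(a^{**})$, and, using $a_i^{*} \in \acl(\emptyset) \subseteq \acl(e)$ together with additivity of dimension in the pregeometry, $\dim(a^{**}/e) \leq \dim(a_{\neq i}/e) = d-1$, contradicting the minimality of $d$.

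The real obstacle is producing a coordinate with $a_i \notin \acl(e, a_{\neq i})$: this does not automatically follow from $\dim(a/e) \geq 1$, since a minimizing tuple can include coordinates that are algebraic over the other coordinates together with $e$ (``redundant'' coordinates, in the matroidal sense). I would handle this by also minimizing $|a|$ among minimizing tuples, and using the hypothesis $\acl(\emptyset) \models T$ to absorb redundant coordinates into $N$, thereby reducing to the case where $a$ is $\acl$-independent over $e$ (so $|a| = d$); in that case every coordinate of $a$ automatically satisfies the required non-algebraicity. This matroidal bookkeeping is the only genuinely delicate point in the argument; everything else is a direct application of the weak minimality / $U$-rank-one machinery from \cref{weakly_minimal_equiv_conditions} and \cref{u_rank_1_acl_facts}.
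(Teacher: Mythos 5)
Your decrement argument hinges on producing a coordinate $a_i$ with $a_i \notin \acl(e, a_{\neq i})$, and you correctly identify this as the crux; but the proposed fix (also minimize $|a|$ and ``absorb redundant coordinates into $N$'') does not close the gap, and I do not see how to make it work. Minimizing $|a|$ is done over tuples $b$ with $e \in \dcl^{eq}(b)$, and this constraint does not interact with the $\acl$-structure in the way you would need: if every $a_i$ lies in $\acl(e, a_{\neq i})$ (a ``no coloop'' configuration in matroid terms, e.g.\ a circuit of parallel elements of positive rank over $e$), you cannot simply delete a coordinate, because $e \in \dcl^{eq}(a)$ may genuinely require all of $a$. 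Nor can you replace such an $a_i$ by an element of $N$ and gain anything, since $a_i \in \acl(e, a_{\neq i})$ already gives $\dim(a_{\neq i}/e) = d$, so the replacement tuple still has dimension $d$. The claim that after minimization ``$a$ is $\acl$-independent over $e$'' is precisely the delicate point, and it is asserted rather than proved; as written, the proof is incomplete.

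The paper's proof sidesteps this entirely by abandoning the global dimension-decrement in favour of a left-to-right recursion with the invariant ``all previously processed coordinates lie in $A := \acl^{eq}(e) \cap M$.'' At step $i$ one considers the fiber $\phi_i(M) = \{x : \exists x_{i+1}\dots x_n\, (e = f(c_1,\dots,c_{i-1},x,\dots,x_n))\}$, which is defined over $\{e\} \cup \{c_1,\dots,c_{i-1}\} \subseteq \acl^{eq}(e)$. If it is infinite, weak minimality produces a replacement in $N \subseteq A$, exactly as in your argument. But if it is finite, no replacement is needed: $d_i$ is algebraic over $e$ together with elements already in $\acl^{eq}(e)$, hence $d_i \in A$ automatically, and one simply keeps it. This is the observation your approach is missing: the ``redundant'' case is not an obstacle to be absorbed, but is already a success provided the recursion is set up so that the earlier coordinates are already in $\acl^{eq}(e)$. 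Your minimization scheme does not maintain that invariant, which is why it cannot handle the finite-fiber coordinates. If you rewrite the proof as a recursion maintaining $c_1,\dots,c_{i-1} \in \acl^{eq}(e) \cap M$, the case split becomes trivial and the matroidal bookkeeping disappears.
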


\begin{proof}
This is a variation of \cite[Lemma 1.6]{Pillay1998}. Let $M \models T$, and let $e \in M^{\eq}$. Thus, there exist a $0$-definable function $f$ and a tuple $b = \left( b_1, \dots, b_n \right) \in M^n$ such that $f \left( b \right) = e$. Let $A := \acl^{\eq} \left( e \right) \cap M$. So $N \subseteq A$.

We will find a tuple $c = \left( c_1, \dots, c_n \right) \in A^n$ such that $f \left( c \right) = e$. We find $c_i$ by recursion on $i$. Suppose we already found $c_1, \dots, c_{i-1} \in A$ such that for some $d_i, \dots, d_n \in M$ we have $e = f \left( c_1, \dots, c_{i-1}, d_i, \dots, d_n \right)$ (for $i=1$ this holds with $\left( d_1, \dots, d_n \right) = \left( b_1, \dots, b_n \right)$). Let $\phi_i \left( x_i \right)$ be the following formula 
\[
\exists x_{i+1}, \dots, x_n \left( e = f(c_1, \dots, c_{i-1}, x_i, x_{i+1}, \dots, x_n) \right).
\]

So $d_i \in \phi_i \left( M \right) \neq \emptyset$. We have two cases: 
(1): $\phi_i \left( M \right)$ is finite. In this case, $d_i \in \acl^{\eq} \left( A \right) \cap M = A$. We let $c_i := d_i$.
(2): $\phi_i \left( M \right)$ is infinite. In this case, by weak minimality (\cref{weakly_minimal_equiv_conditions} (\ref*{weakly_minimal_equiv_conditions_intersection_every_model})) we get $\phi_i \left( N \right) \neq \emptyset$. Let $c_i$ be any element in $\phi_i \left( N \right)$. So $c_i \in N \subseteq A$.

So we found $c \in A^n$ such that $f \left( c \right) = e$, hence  
$e \in \dcl^{\eq} \left( c \right)$ and $c \in \acl^{\eq} \left( e \right)$.
\end{proof}

\begin{prop}
\label{equivalent_cond_for_1_based_under_gei}
Let $T$ be a stable theory with monster model $\cM$. Suppose that $T$ has GEI. Then $T$ is 1-based if and only if for every (real) tuples $a,b \in \cM$ we have 
$a \underset{\acl \left( a \right) \cap \acl \left( b \right)}{\ind} b$.
\end{prop}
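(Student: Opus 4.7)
The plan is to reduce the entire equivalence to a single observation enabled by GEI: for any real tuples $c, d \in \cM$,
\[
\acl^{eq} \roundedb{ \acl \roundedb{c} \cap \acl \roundedb{d} } = \acl^{eq} \roundedb{ \acl^{eq} \roundedb{c} \cap \acl^{eq} \roundedb{d} }.
\]
The nontrivial inclusion is $\supseteq$: given $e$ in the right-hand side, I will use GEI to produce a real tuple $e' \in \cM$ interalgebraic with $e$ in $\cM^{eq}$; since $e' \in \acl^{eq} \roundedb{e} \subseteq \acl^{eq} \roundedb{c}$ is real, $e' \in \acl \roundedb{c}$, and symmetrically $e' \in \acl \roundedb{d}$, so $e' \in \acl \roundedb{c} \cap \acl \roundedb{d}$ and $e \in \acl^{eq} \roundedb{e'}$. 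Because forking independence in a stable theory depends only on the $\acl^{eq}$-closure of the base, this equality of closures yields
\[
c \underset{\acl \roundedb{c} \cap \acl \roundedb{d}}{\ind} d \iff c \underset{\acl^{eq} \roundedb{c} \cap \acl^{eq} \roundedb{d}}{\ind} d
\]
for any real $c, d \in \cM$.

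The forward direction is then immediate: applying the definition of $1$-basedness to real tuples $a, b$ produces $a \underset{\acl^{eq} \roundedb{a} \cap \acl^{eq} \roundedb{b}}{\ind} b$, and the displayed equivalence translates this into the required conclusion.

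For the backward direction, I use the remark following the definition of $1$-basedness to reduce the check to finite tuples $a, b \in \cM^{eq}$. Applying GEI componentwise (or to the canonical code of each tuple) yields real tuples $a', b' \in \cM$ that are interalgebraic in $\cM^{eq}$ with $a$ and $b$ respectively; in particular $\acl^{eq} \roundedb{a} = \acl^{eq} \roundedb{a'}$ and $\acl^{eq} \roundedb{b} = \acl^{eq} \roundedb{b'}$. Since forking independence is invariant under interalgebraicity of the tuples involved, the target $a \underset{\acl^{eq} \roundedb{a} \cap \acl^{eq} \roundedb{b}}{\ind} b$ is equivalent to $a' \underset{\acl^{eq} \roundedb{a'} \cap \acl^{eq} \roundedb{b'}}{\ind} b'$, which by the key equivalence applied to $a', b'$ is exactly the hypothesis.

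I do not expect a genuine obstacle. The whole argument is a translation exercise, and the only point requiring care is the bookkeeping between $\acl$ and $\acl^{eq}$ together with invoking GEI at the right moment to pull imaginary elements back to real tuples. The standard invariance properties of forking — both under algebraic closure of the base and under interalgebraicity of the tuples involved — do the rest.
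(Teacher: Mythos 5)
Your proof is correct and follows essentially the same path as the paper's: both directions hinge on using GEI to identify $\acl^{eq}(\acl(a)\cap\acl(b))$ with $\acl^{eq}(a)\cap\acl^{eq}(b)$, then transferring the independence via the standard invariance of forking under $\acl^{eq}$-closure of the base and under interalgebraicity of the tuples. The only cosmetic difference is that you factor this out as a single ``key equivalence'' reused in both directions, whereas the paper repeats the bookkeeping separately (using base monotonicity in the backward direction), but the mathematical content is identical.
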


\begin{proof}
Suppose $T$ is 1-based. Let $a,b \in \cM$ be real tuples. Then
$a \underset{\acl^{\eq} \left( a \right) \cap \acl^{\eq} \left( b \right)}{\ind} b$. We demonstrate that $\acl^{\eq} \left( a \right) \cap \acl^{\eq} \left( b \right) = \acl^{\eq} \left( \acl \left( a \right) \cap \acl \left( b \right) \right)$: In every theory, the right-hand side is contained in the left-hand side. Let $e \in \acl^{\eq} \left( a \right) \cap \acl^{\eq} \left( b \right)$. By GEI, there exists a real tuple $d \in \cM$ such that $\acl^{\eq} \left( e \right) = \acl^{\eq} \left( d \right)$. So also $d \in \acl^{\eq} \left(a \right) \cap \acl^{\eq} \left(b \right)$, and since $d$ is real, $d \in \acl \left( a \right) \cap \acl \left( b \right)$. Therefore $e \in \acl^{\eq} \left( \acl \left( a \right) \cap \acl \left( b \right) \right)$. So $\acl^{\eq} \left( a \right) \cap \acl^{\eq} \left( b \right) = \acl^{\eq} \left( \acl \left( a \right) \cap \acl \left( b \right) \right)$, hence $a \underset{\acl^{\eq} \left( \acl \left( a \right) \cap \acl \left( b \right) \right)}{\ind} b$, and therefore $a \underset{\acl \left( a \right) \cap \acl \left( b \right)}{\ind} b$.

For the other direction, let $a,b \in \cM^{\eq}$. By GEI, there exist real tuples $a', b' \in \cM$ such that $\acl^{\eq} \left( a' \right) = \acl^{\eq} \left( a \right)$ and $\acl^{\eq} \left( b' \right) = \acl^{\eq} \left( b \right)$. By the assumption, $a' \underset{\acl \left( a' \right) \cap \acl \left( b' \right)}{\ind} b'$, so $\acl^{\eq} \left( a' \right) \underset{\acl \left( a' \right) \cap \acl \left( b' \right)}{\ind} \acl^{\eq} \left( b' \right)$, and hence $\acl^{\eq} \left( a' \right) \underset{\acl^{\eq} \left( a' \right) \cap \acl^{\eq} \left( b' \right)}{\ind} \acl^{\eq} \left( b' \right)$. Therefore $\acl^{\eq} \left( a \right) \underset{\acl^{\eq} \left( a \right) \cap \acl^{\eq} \left( b \right)}{\ind} \acl^{\eq} \left( b \right)$, and hence $a \underset{\acl^{\eq} \left( a \right) \cap \acl^{\eq} \left( b \right)}{\ind} b$.
\end{proof}

\subsection{Archimedean ordered abelian groups} \label{subs:archimedean OAGs}

\begin{defn}
An \emph{ordered abelian group} is an abelian group $\left( G,+ \right)$ together with a linear order $<$ on $G$ such that for all $a,b,c \in G$, $a < b \implies a+c < b+c$. 
An ordered abelian group is called \emph{discrete} if it has a minimal positive element, and \emph{dense} otherwise.
An ordered abelian group is called \emph{archimedean} if for every $a,b \in G$ there exists $n \in \Z$ such that $b < na$.
\end{defn}

\begin{rem}
\begin{enumerate}
\item In a dense ordered abelian group, the order is indeed dense.
\item Archimedean ordered abelian groups do not have non-trivial
convex subgroups.
\item Every discrete archimedean ordered abelian group is isomorphic to $\left( \Z,+,< \right)$.
\end{enumerate}
\end{rem}

\begin{fact}
\label{QE_archimedean_ordered_abelian_group}
Let $\left( G,+,< \right)$ be an archimedean ordered abelian group.
\begin{enumerate}
\item If $G$ is dense,  $\Th{G}$ has quantifier elimination in the language $\curlyb{+,-,<,0} \cup \set{\equiv_m}{2 \le m < \omega}$.
\item If $G$ is discrete,  $\Th{G}$ has quantifier elimination in the language $\curlyb{+,-,<,0,1} \cup \set{\equiv_m}{2 \le m < \omega}$.
\end{enumerate}
Here, $a \equiv_m b$ is interpreted as $a-b \in mG$ and $1$ is interpreted as the minimal positive element.
\end{fact}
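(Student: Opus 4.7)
The discrete case is immediate: every discrete archimedean OAG is isomorphic to $(\Z,+,<,0,1)$, so $\Th{G}$ is Presburger arithmetic and QE in the stated language is Presburger's classical theorem. The work is entirely in the dense case.

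My first step would be a density lemma via H\"older's embedding into $\R$: by H\"older any archimedean OAG embeds into $(\R,+,<)$, and a densely ordered subgroup of $\R$ cannot be cyclic, so $G$ embeds as a dense subgroup. I then claim that for every $m\ge 1$, every $a_1<a_2$ in $G$, and every $c\in G$, there is $b\in G$ with $a_1<b<a_2$ and $b\equiv c\pmod m$: in $\R$ the interval $\bigl((a_1-c)/m,\,(a_2-c)/m\bigr)$ is non-empty, so density gives some $h\in G$ inside it, and $b:=c+mh$ works. Being first-order in the stated language, this property passes to every $H\equiv G$.

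The second step is QE via the substructure criterion. Given an $L$-substructure $A$ common to sufficiently saturated models $M,N\models\Th{G}$ and any $b\in M$, I produce $b'\in N$ realizing $\mathrm{qftp}_L(b/A)$; by saturation it suffices to realize each finite fragment. After clearing denominators, such a fragment reduces to (i) $b$ lying in one open interval with endpoints in $A\cup\{\pm\infty\}$, together with (ii) finitely many congruences of the form $n_ib\equiv_{m_i}a_i$ with $a_i\in A$. A CRT-type calculation combines (ii) into a single congruence $b\equiv c^{*}\pmod{m^{*}}$ with $m^{*}=\mathrm{lcm}(m_i)$ and $c^{*}\in N$ (solvability in $N$ transfers from solvability in $M$ via the $L$-substructure condition, and $N$ is torsion-free so division by the $n_i$ is unambiguous whenever it is defined). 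The density lemma applied in $N$ with modulus $m^{*}$ then delivers $b'$ inside the interval from (i); unbounded endpoints are handled by saturation plus shifts by multiples of $m^{*}$.

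I expect the main obstacle to be the book-keeping around the ``mixed'' congruence terms $n_ib$ with $n_i>1$: rewriting each $n_ib\equiv_{m_i}a_i$ as a condition on $b$ alone involves the set $\{h\in N:n_ih\in a_i+m_iN\}$, and intersecting these over $i$ requires a careful CRT computation that concentrates solvability onto a single coset $c^{*}+m^{*}N$ with a representative visible in $N$. Once this reduction is executed, the density lemma and saturation close the argument uniformly in all cases.
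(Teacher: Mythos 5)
The paper proves this fact by a one-line citation to Halevi--Hasson (Proposition~3.4), observing only that the archimedean hypothesis trivializes the spines, so your argument is a genuinely different, self-contained route. Your treatment of the discrete case is correct, and the H\"older/density lemma for the dense case is fine and does transfer to every $H\equiv G$ exactly as you say. The problem is in the reduction step. You assert that, after clearing denominators, a finite fragment of $\mathrm{qftp}_L(b/A)$ reduces to an interval condition together with \emph{positive} congruences $n_i b\equiv_{m_i}a_i$. That is not true: the quantifier-free type of $b$ over $A$ also contains negated congruence literals $\neg(n b\equiv_m a)$, a finite fragment may well consist almost entirely of them, and in a general dense archimedean ordered abelian group these cannot be rewritten as (finite) positive boolean combinations. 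The hypothesis here is only ``archimedean,'' not ``small quotients'': already for $G=\Z[\pi]\subseteq\R$ the quotient $G/2G\cong\mathbb F_2[x]$ is infinite, and for $b=\pi$ and $A=\Z$ the type contains $\neg(b\equiv_2 a)$ for every $a\in A$ and no positive $\equiv_2$-literal at all. So your CRT step, which concentrates the positive part onto a single coset $c^*+m^*N$, does not finish the job: you still must find an element of that coset, inside the interval, avoiding finitely many further cosets $d_j+\tilde m_j N$, and you must justify that this is possible in $N$ given that it is possible in $M$. That transfer is exactly where the content of the QE theorem sits. It is not an instance of the common-substructure hypothesis (the relevant sentence is not quantifier-free over $A$) and needs a separate argument --- in practice a B.H. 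Neumann--type lemma on covers by cosets, a reduction to the finitely many $j$ for which $\tilde m_j N$ has finite index in $m^*N$, and transfer of the relevant cardinalities $|N/kN|$ via $M\equiv N$, together with a denominator-clearing trick to turn the compatibility conditions $c^*-d_j\in\tilde m_j H$ into atomic statements over $A$. None of this is in your sketch. Until the negated congruences are handled, the proposal has a genuine gap; the positive-congruence bookkeeping you flag as the ``main obstacle'' is actually the easier half.
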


\begin{proof}
This is a special case of \cite[Proposition 3.4]{HaleviHasson2019}. In our case, since $G$ is archimedean, it has no non-trivial
convex subgroups. In particular, this implies that all spines are trivial.

Note that ``$-$'' is not required for quantifier elimination: however, we added it for convenience.
\end{proof}

\begin{defn}
Let $\left( G,+,< \right)$ be an ordered abelian group. A \emph{generalized interval with rational endpoints} is a nonempty set of the form $\set{g \in G}{ng \ge a} \cap \set{g \in G}{mg \le b}$, where $1 \le n,m \in \omega$, $a,b \in G \cup \curlyb{\pm \infty}$, and either or both inequalities may be replaced by strict inequalities. As usual, a generalized interval with neither minimum nor maximum is called \emph{open}.
\end{defn}

We will denote a set of the form $\set{g \in G}{ng > a} \cap \set{g \in G}{mg < b}$ by $\roundedb{\frac{a}{n} , \frac{b}{m}}$, and similarly for other kinds of generalized intervals with rational endpoints. This notation makes sense, because an ordered abelian group is torsion-free and the order extends uniquely to the divisible hull.
Thus, such an interval in $G$ is precisely the intersection with $G$ of the corresponding interval in the divisible hull of $G$.

For $1 \le n < \omega$ we will use the convention that $n \cdot \infty = \frac{\infty}{n} = \infty$ and $n \cdot \roundedb{-\infty} = \frac{-\infty}{n} = -\infty$.

\begin{defn}
Let $\left( G,+,< \right)$ be an ordered abelian group. For a subset $A \subseteq G$, we denote by $\spn{A}$ the linear span of $A$ over $\Q$, computed in the divisible hull of $G$ (so it is a subset of the divisible hull). We also denote $\spnplus{A} := \spn{A \cup \dcl \roundedb{0}}$. 
\end{defn}

If $\left( G,+,< \right)$ is elementarily equivalent to an archimedean ordered abelian group, there are two cases: If $G$ is dense, then by \cref{QE_archimedean_ordered_abelian_group} we have $\dcl \roundedb{0} = \curlyb{0}$ and $\spnplus{A} = \spn{A}$. Otherwise, $G$ is discrete, so elementarily equivalent to $\roundedb{\Z,+,<}$. By \cref{QE_archimedean_ordered_abelian_group} we may assume that $G$ is an elementary extension of $\roundedb{\Z,+,<}$, so $\dcl \roundedb{0} = \Z$ and $\spnplus{A} = \spn{A \cup \curlyb{1}}$.

\begin{cor}
\label{cor_shape_of_unary_subsets_archimedean_small_quotients}
Let $\left( G,+,< \right)$ be elementarily equivalent to an archimedean ordered abelian group with small quotients. Then, for every unary definable subset $D \subseteq G$ there exists some $1 \leq m <\omega$ and $D_i$ for $i=1,\ldots, N$ such that $D=\bigcup_{i=1}^N D_i$ and for each $i$, either
\begin{itemize}

\item $D_i = \curlyb{c_i}$ is a singleton, or

\item $D_i = I_i \cap \roundedb{mG + g_i}$, where $g_i \in G$ and $I_i$ is an open generalized interval with rational endpoints.

\end{itemize}
Note that $m$ does not depend on $i$. 

In addition, suppose $D$ is definable over $\Bar{b}$. Then, for each $i$, if $D_i = \curlyb{c_i}$ is a singleton then $c_i \in \spnplus{\Bar{b}} \cap G$, and otherwise the endpoints of $I_i$ are in $\spnplus{\Bar{b}} \cup \curlyb{\pm \infty}$. Note that we do not require that the $g_i$'s are from $\Bar{b}$.
\end{cor}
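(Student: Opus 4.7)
The plan is to apply the quantifier elimination of \cref{QE_archimedean_ordered_abelian_group} to write $D$ as a boolean combination of atomic formulas in the single variable $x$: equalities $n x = t(\bar{b})$, strict inequalities $n x < t(\bar{b})$, and congruences $n x \equiv_m t(\bar{b})$, with $n \in \Z$ and $t(\bar{b}) \in \spnplus{\bar{b}}$ (the constant $0$ in the dense case, or any integer in the discrete case, is covered by $\spnplus{\bar{b}}$). Note that $G$ still has small quotients by \cref{small_quotients_or_torsion_are_elementary_properties}. I would analyze each atomic type separately and then recombine.

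For the order atomics: an equality $n x = t(\bar{b})$ with $n \neq 0$ defines at most a single point $t(\bar{b})/n \in \spnplus{\bar{b}}$ (a point of $G$ iff this quotient happens to lie in $G$), while a strict inequality $n x < t(\bar{b})$ defines an open half-line of $G$ with endpoint $t(\bar{b})/n \in \spnplus{\bar{b}}$, unless $n=0$ in which case the formula is trivial. A boolean combination of finitely many such atomics therefore partitions $G$ into finitely many pieces, each of which is either a singleton lying in $\spnplus{\bar{b}} \cap G$ or an open generalized interval with rational endpoints in $\spnplus{\bar{b}} \cup \curlyb{\pm \infty}$.

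For the congruence atomics, the key observation is that for any $n \in \Z$ and $1 \le m < \omega$, the set $H_{n,m} := \set{x \in G}{n x \in m G}$ is a subgroup containing $m G$, and so, by small quotients, has finite index in $G$; hence the solution set of $n x \equiv_m c$ is either empty or a coset of $H_{n,m}$, and in particular a finite union of cosets of $m G$. Let $m^{*}$ be the least common multiple of the moduli appearing in the quantifier-free form of $\phi(x,\bar{b})$. Then every congruence atomic, and hence every boolean combination of them, becomes a disjunction of statements of the form $x \in m^{*} G + g$ for finitely many $g \in G$.

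The final step is to intersect the interval/singleton partition produced by the order atomics with the coset-of-$m^{*} G$ partition produced by the congruences. Each resulting piece of $D$ is either a singleton $\curlyb{c_i}$ with $c_i \in \spnplus{\bar{b}} \cap G$ (arising from an order equality, or from the intersection of such a singleton with a coset) or a set of the form $I_i \cap (m^{*} G + g_i)$ with $I_i$ an open generalized interval whose endpoints lie in $\spnplus{\bar{b}} \cup \curlyb{\pm \infty}$ and $g_i \in G$, which is exactly the desired decomposition. I do not expect a serious obstacle: the only real use of the hypothesis is in converting congruences $nx \equiv_m c$ with a multiplier on $x$ into unions of cosets of a single modulus $m^{*} G$, and this is immediate from the finiteness of $[H_{n,m^{*}} : m^{*} G]$; everything else is a routine unpacking of the quantifier-elimination normal form.
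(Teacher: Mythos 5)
Your proposal is correct and follows essentially the same route as the paper: apply the quantifier elimination of \cref{QE_archimedean_ordered_abelian_group}, reduce each atomic to a singleton/half-line (for $=$, $<$) or a finite union of cosets of $mG$ (for $\equiv_m$, using small quotients), pass to a common modulus $m^*$, and intersect the resulting partitions. The only cosmetic difference is that you phrase the congruence step via the finite-index subgroup $H_{n,m}=\set{x}{nx\in mG}$ rather than directly observing that the solution set of $nx\equiv_m c$ is invariant under translation by $mG$, but these are the same argument.
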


\begin{proof}
By \cref{QE_archimedean_ordered_abelian_group}, $D$ is definable by a Boolean combination of atomic formulas over $\Bar{b}$ in either $\curlyb{+,-,<,0} \cup \set{\equiv_m}{2 \le m < \omega}$ or $\curlyb{+,-,<,0,1} \cup \set{\equiv_m}{2 \le m < \omega}$, depending on whether $G$ is dense or discrete. We note the following points:

\begin{itemize}

\item A term $t \roundedb{\Bar{x}}$ (without parameters) is a linear combination over $\Z$ of either the elements of $\Bar{x}$, if $G$ is dense, or the elements of $\Bar{x}$ and $1$, if $G$ is discrete.

\item A formula of the form $t_1 \roundedb{x , \Bar{b}} = t_2 \roundedb{x , \Bar{b}}$ is equivalent to a formula of the form $nx = t \roundedb{\Bar{b}}$ with $n < \omega$. If $n=0$, then this formula is equivalent to either $\top$ or $\bot$. In addition, if $n \ge 1$, then if $\frac{t \roundedb{\Bar{b}}}{n} \in G$ then this formula defines $\frac{t \roundedb{\Bar{b}}}{n}$, otherwise this formula is equivalent to $\bot$. This also implies that a conjunction of formulas such that one of them is $nx = t \roundedb{\Bar{b}}$ with $n \ge 1$, is equivalent to  either $nx = t \roundedb{\Bar{b}}$ or to $\bot$.

\item A formula of the form $\neg \roundedb{ t_1 \roundedb{x , \Bar{b}} = t_2 \roundedb{x , \Bar{b}} }$ is equivalent to $\roundedb{ t_1 \roundedb{x , \Bar{b}} < t_2 \roundedb{x , \Bar{b}} } \vee t_2 \roundedb{x , \Bar{b}} < \roundedb{ t_1 \roundedb{x , \Bar{b}} }$.

\item A formula of the form $t_1 \roundedb{x , \Bar{b}} < t_2 \roundedb{x , \Bar{b}}$ is equivalent to a formula of either the form $nx < t \roundedb{\Bar{b}}$ or the form $ t \roundedb{\Bar{b}} < nx$, with $n < \omega$. If $n=0$, then this formula is equivalent to either $\top$ or $\bot$.

\item A formula of the form $\neg \roundedb{ t_1 \roundedb{x , \Bar{b}} < t_2 \roundedb{x , \Bar{b}    } }$ is equivalent to $\roundedb{  t_2 \roundedb{x , \Bar{b}} < t_1 \roundedb{x , \Bar{b}} } \vee \roundedb{ t_1 \roundedb{x , \Bar{b}} = t_2 \roundedb{x , \Bar{b}} }$.

\item A formula of the form $\roundedb{n_1x < t_1 \roundedb{\Bar{b}}} \wedge \roundedb{n_2x < t_2 \roundedb{\Bar{b}}}$ with $n_1, n_2 \ge 1$ is equivalent to a formula of the form $nx < t \roundedb{\Bar{b}}$ with $n \ge 1$, and analogously for $>$ in place of $<$ (i.e., switching the order in the inequality). 

\item Let $2 \le m < \omega$. Since $G$ has small quotients, we can take a finite set $g_1, \dots, g_N \in G$ of representatives for the cosets of $mG$. A formula of either the form $t_1 \roundedb{x , \Bar{b}} \equiv_m t_2 \roundedb{x , \Bar{b}}$ or the form $\neg \roundedb{ t_1 \roundedb{x , \Bar{b}} \equiv_m t_2 \roundedb{x , \Bar{b}} }$ is equivalent to $\bigvee_{i \in F} x \equiv_m g_i$ for some $F \subseteq \curlyb{1, \dots, N}$. 

\item Similarly, if $2 \le m_1,m_2 < \omega$, $m_1 | m_2$ and $g_1, \dots, g_N \in G$ are representatives for the cosets of $m_2G$, then a formula of the form $x \equiv_{m_1} h$ is equivalent to $\bigvee_{i \in F} x \equiv_{m_2} g_i$ for some $F \subseteq \curlyb{1, \dots, N}$. 

\item A formula of the form $\roundedb{x \equiv_{m} g} \wedge \roundedb{x \equiv_{m} h}$ is equivalent to either $x \equiv_{m} g$, if $g \equiv_{m} h$, or to $\bot$, if $g \not\equiv_{m} h$.

\end{itemize}

The conclusion follows easily from these points.
\end{proof}

\begin{lem}
\label{lem_intersection_of_interval_with_coset}
Let $\left( G,+,< \right)$ be elementarily equivalent to an archimedean ordered abelian group with small quotients. Then, for every $1 \le m < \omega$, $g \in G$, $1 \le n,k < \omega$, and $a,b \in G \cup \curlyb{\pm \infty}$, if $\roundedb{ \frac{a}{n} , \frac{b}{k}}$ is infinite then $\roundedb{ \frac{a}{n} , \frac{b}{k}} \cap \roundedb{mG + g}$ is infinite. 
\end{lem}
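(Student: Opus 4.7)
The plan is to prove a uniform, quantitative, first-order strengthening of the lemma in the standard archimedean model and then transfer it by elementary equivalence. Concretely, for any archimedean ordered abelian group $G_0$ with small quotients, I aim to show that for every fixed $m, n, k, C' \geq 1$ (and for each choice of $a, b$ being finite or $\pm\infty$), the first-order sentence
\[
\forall a, b, g \colon \ \bigl|(a/n, b/k) \cap G_0\bigr| \geq mC' \ \Longrightarrow\ \bigl|(a/n, b/k) \cap (mG_0 + g)\bigr| \geq C'
\]
holds in $G_0$. Being first-order (using $\equiv_m$ to encode $mG_0 + g$), each such sentence transfers to $G \equiv G_0$; then, since $(a/n, b/k) \cap G$ is infinite, $\bigl|(a/n, b/k) \cap G\bigr| \geq mC'$ for every $C'$, hence $\bigl|(a/n, b/k) \cap (mG + g)\bigr| \geq C'$ for every $C'$, forcing infiniteness of the intersection.

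To prove the sentence in $G_0$, I would split into two subcases depending on whether $G_0$ is densely or discretely ordered. The discrete case is $G_0 \cong (\Z, +, <)$: here $(a/n, b/k) \cap \Z$ is simply an interval of consecutive integers, and any $mC'$ consecutive integers contain at least $C'$ elements in each coset of $m\Z$, so the implication is immediate.

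The dense case carries the main content. Here $G_0$ realises as a dense subgroup of $\R$, and the key step is to show that $mG_0$ is itself dense in $\R$: if not, the closure $\overline{mG_0}$ would be a proper nontrivial closed subgroup of $\R$, hence of the form $c\Z$ for some $c > 0$, making $mG_0$ discrete; but then $G_0$, being a finite union of cosets of $mG_0$ by small quotients, would also be discrete, contradicting density. Consequently, $mG_0 + g$ is dense in $\R$, so every nonempty open interval $(a/n, b/k) \subseteq \R$ contains infinitely many—hence at least $C'$ many—of its points. Since $G_0$ is dense, $(a/n, b/k) \cap G_0$ is nonempty iff the interval in $\R$ is nonempty, so this already suffices (indeed $C = 1$ works here, while $C = mC'$ was chosen uniformly across both cases). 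The only genuine technical point is the density assertion, which rests on combining the archimedean hypothesis (embedding into $\R$) with small quotients; the rest is routine first-order bookkeeping together with the separate treatment of the endpoint configurations $a, b \in G_0 \cup \{\pm\infty\}$.
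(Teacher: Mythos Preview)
Your proof is correct and takes a somewhat different route from the paper's. The paper first reduces to $n = k = 1$ and then to finite endpoints, and only afterwards splits into the discrete and dense cases; in the discrete case it works directly in an elementary extension of $\Z$ using the subset $\{a+1, a+2, \ldots\} \subseteq (a,b)$, and in the dense case it reduces (via a first-order sentence for each $m$) to the archimedean model and then gives a hands-on pigeonhole argument: use small quotients to find some $0 < c < b$ with $c \in mG_0$, then use the archimedean property to slide a representative of $mG_0 + g$ into $(0,b)$. Your approach instead packages everything into a single quantitative first-order scheme proved once in the archimedean model and transferred, with no preliminary reduction of $n,k$ or of the endpoints. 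In the dense case you replace the pigeonhole argument by the observation that $mG_0$ must be dense in $\R$, using the classification of closed subgroups of $\R$ together with small quotients; this makes the conclusion immediate. Your version is arguably cleaner and more uniform, at the cost of invoking the H\"older embedding and the closed-subgroup dichotomy for $\R$, whereas the paper's argument stays entirely internal to the ordered abelian group.
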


\begin{proof}
First, we demonstrate that it is sufficient to prove this for $n=k=1$. Since $\roundedb{ \frac{a}{n} , \frac{b}{k}} = \roundedb{ \frac{ka}{nk} , \frac{nb}{nk}}$, we may assume that $n=k$. Since $\roundedb{ \frac{a}{n} , \frac{b}{n}}$ is infinite and $G$ is torsion-free, also $\roundedb{a,b}$ is infinite. Applying the lemma with $nm$ in place of $m$, $ng$ in place of $g$, $n=k=1$, and $a,b$, we find that 
$\roundedb{a,b} \cap \roundedb{nmG + ng}$ is infinite. Every element in this set is of the form $nmh + ng = n \roundedb{mh + g}$ for some $h \in G$, so $mh + g \in \roundedb{ \frac{a}{n} , \frac{b}{n}} \cap \roundedb{mG + g}$. And if $n \roundedb{mh_1 + g} \neq n \roundedb{mh_2 + g}$ then $\roundedb{mh_1 + g} \neq \roundedb{mh_2 + g}$. Therefore $\roundedb{ \frac{a}{n} , \frac{b}{n}} \cap \roundedb{mG + g}$ is infinite.

Second, we show that it is enough to prove this for $a,b \in G$. Indeed, suppose that $b = \infty$. By replacing $g$ with $\roundedb{1-m}g$ we may assume that $g \ge 0$. If $a>0$, let $c := 2a > a$; otherwise, let $c>0$ be any positive element. Now for every $1 \le k < \omega$, $a < mkc + g \in mG + g$, so $\roundedb{a , \infty} \cap \roundedb{mG + g}$ is infinite. The case when $a = -\infty$ is dealt with analogously.

If $G$ is discrete, then it is elementarily equivalent to $\left( \Z,+,< \right)$, and (using quantifier elimination) we may assume that $G$ is an elementary extension of $\left( \Z,+,< \right)$. In this case, the set $\set{a + n}{1 \le n < \omega}$ is contained in $\roundedb{a,b}$ and has an infinite intersection with $mG + g$.

So suppose $G$ is dense. Then $\roundedb{a,b}$ is infinite if and only if $a < b$, and, by iterating, $\roundedb{a,b} \cap \roundedb{mG + g}$ is infinite if and only if it is nonempty. Thus, for each $m$, the statement we want to prove is equivalent to ``for every $a,b,g \in G$, if $a < b$ then $\roundedb{a,b} \cap \roundedb{mG + g}$ is nonempty'', which is a first-order sentence. Hence, we may assume that $G$ itself is archimedean.
Also, note that it is enough to prove this statement for $a=0$. 

Thus, let $1 \le m < \omega$ and let $b,g \in G$ such that $b > 0$. We show that $\roundedb{0,b} \cap \roundedb{mG + g}$ is nonempty.
Since $G$ has small quotients, $G/mG$ is finite. Since $\roundedb{0,b}$ is infinite, there are $0 < d_1 < d_2 < b$ such that $c := d_2 - d_1 \in mG$.
Since $G$ is archimedean, there exists $n \in \Z$ such that $nc < g \le \roundedb{n+1}c$. Let $e := g - nc$. Then $0 < e \le c < b$ and $e \in mG + g$, as required.
\end{proof}

\begin{cor}
\label{cor_acl_archimedean_small_quotients}
Let $\roundedb{G,+,<}$ be elementarily equivalent to an archimedean ordered abelian group with small quotients. If $G$ is discrete, let $N := \roundedb{G,+,<,1}$ and $M := \roundedb{G,+,1}$, otherwise let $N := \roundedb{G,+,<}$ and $M := \roundedb{G,+}$. Then for every $A \subseteq G$, $\acl_{N}\roundedb{A} = \spnplus{A} \cap G = \acl_{M}\roundedb{A}$.
\end{cor}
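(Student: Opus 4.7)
The plan is to establish the chain of inclusions
\[
\spnplus{A} \cap G \;\subseteq\; \acl_M \roundedb{A} \;\subseteq\; \acl_N \roundedb{A} \;\subseteq\; \spnplus{A} \cap G.
\]
The middle inclusion is immediate since the language of $N$ contains that of $M$. For the first inclusion, given $g \in \spnplus{A} \cap G$, clearing a common denominator in a representation of $g$ as a $\Q$-linear combination gives an equation $n g = \sum_{i} m_i a_i$ holding in $G$, where $n \ge 1$, $m_i \in \Z$ and the $a_i$ come from $A$ (respectively from $A \cup \curlyb{1}$ in the discrete case). As $G$ is torsion-free, $g$ is the unique solution in $G$ of the quantifier-free $M$-formula $n x = \sum_i m_i a_i$, and hence $g \in \dcl_M \roundedb{A} \subseteq \acl_M \roundedb{A}$.

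The bulk of the argument is the third inclusion $\acl_N \roundedb{A} \subseteq \spnplus{A} \cap G$. Given $g \in \acl_N \roundedb{A}$, I pick an algebraic formula $\phi \roundedb{x, \bar{c}}$ in the language of $N$ with $\bar{c}$ from $A$ which is satisfied by $g$, and apply \cref{cor_shape_of_unary_subsets_archimedean_small_quotients} to write $\phi \roundedb{G, \bar{c}} = \bigcup_{i=1}^{N_0} D_i$, where each $D_i$ is either a singleton $\curlyb{c_i}$ with $c_i \in \spnplus{\bar{c}} \cap G$, or of the form $I_i \cap \roundedb{m G + g_i}$ with $I_i$ an open generalized interval whose endpoints lie in $\spnplus{\bar{c}} \cup \curlyb{\pm \infty}$. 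Since $\phi \roundedb{G, \bar{c}}$ is finite, so is every $D_i$. If $g$ lies in a singleton piece we are done, so assume $g \in D_i = I_i \cap \roundedb{m G + g_i}$. The contrapositive of \cref{lem_intersection_of_interval_with_coset} forces $I_i$ itself to be finite. In the dense case, a finite open interval is empty, contradicting $g \in D_i$, so that subcase is vacuous.

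The main obstacle is therefore the discrete subcase, where $I_i$ is a finite nonempty open generalized interval and we must verify that its elements lie in $\spnplus{\bar{c}}$. My plan is to clear denominators and write $I_i = \roundedb{c_1/d, c_2/d}$ with $c_1, c_2 \in \spnplus{\bar{c}} \cap G$ and $d := n_i k_i$, so that $g \in I_i \cap G$ corresponds to $d g \in \roundedb{c_1, c_2} \cap d G$. A second use of \cref{lem_intersection_of_interval_with_coset} in its contrapositive form, now with $n = k = 1$, $m = d$ and parameter $0$, upgrades finiteness of $\roundedb{c_1, c_2} \cap d G$ to finiteness of the ordinary interval $\roundedb{c_1, c_2}$ in $G$. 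In an elementary extension of $\roundedb{\Z, +, <}$ the elements of a finite bounded interval $\roundedb{c_1, c_2}$ with $c_1, c_2 \in G$ are exactly $c_1 + 1, c_1 + 2, \dotsc, c_1 + N - 1$ for some standard $N \in \N$, and each of these lies in $\spnplus{c_1} \subseteq \spnplus{\bar{c}}$. Hence $d g \in \spnplus{\bar{c}} \cap G$, and dividing by $d$ gives $g \in \spnplus{\bar{c}} \cap G \subseteq \spnplus{A} \cap G$, closing the chain of inclusions.
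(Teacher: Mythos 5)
Your proof is correct and follows the same route as the paper: first the easy inclusions $\spnplus{A}\cap G \subseteq \acl_{M}(A) \subseteq \acl_{N}(A)$, then \cref{cor_shape_of_unary_subsets_archimedean_small_quotients} together with the contrapositive of \cref{lem_intersection_of_interval_with_coset} to reduce the reverse inclusion to a nonempty finite open generalized interval $I_i$ in the discrete case. For that last step the paper locates the minimum $a'$ of $I_i$ and shows $a'\in\spnplus{A}$ by a short modular computation, whereas you rescale $I_i$ by the common denominator $d$ to an ordinary interval with endpoints in $G$, apply \cref{lem_intersection_of_interval_with_coset} a second time (with $m=d$, $n=k=1$) to see that the rescaled interval is itself finite, and then enumerate its elements; both variants reach the same conclusion, and your second use of the lemma is a mildly tidier way of avoiding the explicit minimum. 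One small detail you pass over before clearing denominators: it is finiteness of $I_i$ together with unboundedness of $G$ (since $G\equiv(\Z,+,<)$) that rules out $\pm\infty$ as an endpoint of $I_i$, so that the endpoints really do lie in $\spnplus{\bar{c}}$ rather than merely in $\spnplus{\bar{c}}\cup\curlyb{\pm\infty}$; the paper records this explicitly and it is worth a line.
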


\begin{proof}
Clearly $\spnplus{A} \cap G  \subseteq  \dcl_{M}\roundedb{A} \subseteq  \acl_{M}\roundedb{A} \subseteq \acl_{N}\roundedb{A}$. We show $\acl_{N}\roundedb{A} \subseteq \spnplus{A} \cap G$.
Let $c \in \acl_{N}\roundedb{A}$. Thus, there exists a finite unary subset $D \subseteq G$ definable over $A$ such that $c \in D$. By \cref{cor_shape_of_unary_subsets_archimedean_small_quotients}, $D = \bigcup_{i=1}^N D_i$ such that, for each $i$, either
\begin{enumerate}

\item $D_i = \curlyb{c_i}$ is a singleton with $c_i \in \spnplus{A}$, or

\item $D_i = I_i \cap \roundedb{mG + g_i}$, where $g_i \in G$ and $I_i$ is an open generalized interval with rational endpoints, such that the endpoints of $I_i$ are in $\spnplus{A} \cup \curlyb{\pm \infty}$,

\end{enumerate}
and $1 \le m < \omega$ does not depend on $i$. 
Thus, $c \in D_i$ for some $i$. If $D_i$ is as in the first case, then $c = c_i \in \spnplus{A}$ as required. 
So suppose that $D_i$ is as in the second case. Write $I_i = \roundedb{a,b}$ with $a,b \in \spnplus{A} \cup \curlyb{\pm \infty}$. Since $D$ is finite, also $D_i = I_i \cap \roundedb{mG + g_i}$ is finite. By \cref{lem_intersection_of_interval_with_coset}, $I_i$ is finite, which is possible only when $G$ is discrete. Since $I_i$ is finite, $a,b \notin \curlyb{\pm \infty}$, so $a,b \in \spnplus{A}$. In particular, this means that $I_i$ is bounded from below; therefore, since $G$ is discrete, $I_i$ has a minimum $a'$. So $a' \in I_i$ but $a' - 1 \notin I_i$, hence $a < a' \le a + 1$. Since $a \in \spnplus{A}$, there exists $1 \le n < \omega$ such that $na \in G$. Now $na < na' \le na + n$ and $na, na' \in G$, so there exists $1 \le k \le n$ such that $na' = na + k$, and hence $a' = a + \frac{k}{n} \in \spnplus{A}$. Since $c \in I_i$ and $I_i$ is finite, there exists $l < \omega$ such that $c = a' + l \in \spnplus{A}$, as required.
\end{proof}

\section{Preserving group homomorphisms} \label{sec:preservation of hom}

In this section, we will give sufficient conditions for when an expansion of a group does not add new definable group homomorphisms. Namely, we have a structure $M$ expanding a group $\left(G,\cdot \right)$, and some expansion $N$, and we ask whether $N$ adds new $N$-definable group homomorphisms $G \to G$.
We give two lemmas which ensure that in two important cases, the answer is no. The first case is when $G$ is an $R$-group (see \cref{def_R-group}), and the second is when $G$ is an abelian group with small quotients (see \cref{def_small_quotients_and_torsion}). In both cases we require that $\dcl_{N} \subseteq \acl_{M}$, namely, that for every set $A \subseteq M$, $\dcl_{N} \left( A \right) \subseteq \acl_{M} \left( A \right)$. We also require some saturation.

\begin{lem}
\label{lem_homomorphism_preservation_R_groups}
Suppose that $N$ is an $\omega$-saturated structure $0$-expanding a structure $M$ that expands an $R$-group $\left( G, \cdot \right)$. 
Let $f : G \to G$ be a group homomorphism definable in $N$. If there exists a finite set $F \subseteq G$ such that for every $g\in G$, $f\left(g\right) \in \acl_{M}\left(Fg\right)$, then $f$ is definable in $M$.

In particular, if  $\dcl_{N} \subseteq \acl_{M}$, then every group homomorphism $f : G \to G$ definable in $N$ is definable in $M$.
\end{lem}

\begin{proof}
The group operation is definable in $M$, possibly using finitely many parameters. By naming these parameters in $M$ and $N$, we may assume that $M$ $0$-expands $\left( G, \cdot \right)$. Since we named only finitely many elements, $N$ remains $\omega$-saturated.

Let $L$ be the language of $M$ and $L'$ be the language of $N$. We may assume that $L \subseteq L'$. By adding the finite set $F$ and the finitely many parameters in the definition of $f$ to both $M$ and $N$, we may assume that $f$ is $\emptyset$-definable in $N$ and that for every $g\in G$, $f\left(g\right)\in\acl_{M}\left(g\right)$.
Thus, for all $g$, there exists an $L$-formula $\varphi_{g}\left(x,y\right)$
(over $\emptyset$) such that $N\models\varphi_{g}\left(g,f\left(g\right)\right)$
and $\varphi_{g}\left(g,M\right)$ is finite. Hence, by saturation
of $N$, for all $p\in S_{L'}\left(\emptyset\right)$ there is an
$L$-formula $\varphi_{p}\left(x,y\right)$ such that $\varphi_{p}\left(x,f\left(x\right)\right)\in p$
and $\exists^{\leq n_{p}}y\varphi_{p}\left(x,y\right)\in p$ for some
$n_{p}<\omega$. By compactness, there are finitely many formulas corresponding
to finitely many types $p_{0},\ldots,p_{k}$ which cover all of $S_{L'}\left(\emptyset\right)$.
Taking the disjunction of these formulas, we obtain an $L$-formula $\varphi\left(x,y\right)$
(over $\emptyset$) such that, for all $g\in G$, $N\models\varphi\left(g,f\left(g\right)\right)$
(in other words, $\varphi$ contains the graph of $f$) and $\varphi\left(g,M\right)$
is uniformly bounded by some number. Given such a formula $\varphi\left(x,y\right)$
(maybe with parameters from $M$), let $m_{\varphi}:=\max\set{\left|\varphi\left(g,M\right)\right|}{g\in G}$.
Let $m:=\min\set{m_{\varphi}}{\varphi\text{ as above}}$, and let $\varphi_{0}$
be such that $m_{\varphi_{0}}=m$.

({*}) Note that if $\varphi'$ implies $\varphi_{0}$ and contains
the graph of $f$ then $m_{\varphi'}=m_{\varphi_{0}}=m$.

Let $\varphi_{1}\left(x,y\right):=\varphi_{0}\left(x,y\right)\land\forall x'\exists y'\left(\varphi_{0}\left(x',y'\right)\land\varphi_{0}\left(xx',yy'\right)\right)$.
Note that $\varphi_{1}$ also contains the graph of $f$: for all
$g\in G$, $\varphi_{0}\left(g,f\left(g\right)\right)$ holds and
for all $g'$, $f\left(gg'\right)=f\left(g\right)f\left(g'\right)$
so that $\varphi_{0}\left(gg',f\left(g\right)f\left(g'\right)\right)$
and $\varphi_{0}\left(g',f\left(g'\right)\right)$. By ({*}), for
some $g_{*}\in G$, $\left|\varphi_{1}\left(g_{*},M\right)\right|=m$.
Let $\varphi_{1}\left(g_{*},M\right)=\set{f\left(g_{*}\right)h_{i}}{i<m}$
where $h_{i}\in G$ are distinct for $i<m$ and $h_{0}=1$.

Let $K:=m^{2}+1$.

Let $\varphi_{2}\left(x,y\right):=\bigwedge_{k<K}\varphi_{0}\left(x^{k+1},y^{k+1}\right)\land\bigwedge_{k<K}\varphi_{0}\left(g_{*}x^{k+1},f\left(g_{*}\right)y^{k+1}\right)$.
Note that $\varphi_{2}$ implies $\varphi_{0}$ (by putting $k=0$)
and contains the graph of $f$ as well (because $f\left(g^{k}\right)=f\left(g\right)^{k}$
and $f\left(g_{*}g^{k}\right)=f\left(g_{*}\right)f\left(g\right)^{k}$).
Again by ({*}), $m_{\varphi_{2}}=m$. We claim that $\varphi_{2}$
defines $f$.

Let $g_{\dagger}\in G$ be such that $\left|\varphi_{2}\left(g_{\dagger},M\right)\right|=m$
and write $\varphi_{2}\left(g_{\dagger},M\right)=\set{f\left(g_{\dagger}\right)c_{i}}{i<m}$
where $c_{i}\in G$ are distinct for $i<m$ and $c_{0}=1$.

We now note the following points:
\begin{enumerate}
\item For all $k<K$, $\varphi_{0}\left(g_{\dagger}^{k+1},M\right)=\set{\left(f\left(g_{\dagger}\right)c_{i}\right)^{k+1}}{i<m}$.
Why? as $\varphi_{2}\left(g_{\dagger},f\left(g_{\dagger}\right)c_{i}\right)$ holds
we get $\supseteq$ by the definition of $\varphi_{2}$. On the other
hand as the group is an $R$-group, $\left(f\left(g_{\dagger}\right)c_{i}\right)^{k+1}\neq\left(f\left(g_{\dagger}\right)c_{j}\right)^{k+1}$
for $i\neq j$, and as $m_{\varphi_{0}}=m$, $\left|\varphi_{0}\left(g_{\dagger}^{k+1},M\right)\right|\leq m$
so we obtain equality.
\item For the same reason as (1), for all $k<K$, $\varphi_{0}\left(g_{*}g_{\dagger}^{k+1},M\right)=\set{f\left(g_{*}\right)\left(f\left(g_{\dagger}\right)c_{i}\right)^{k+1}}{i<m}$.
\item For $i<m$ and $k<K$, as $\varphi_{1}\left(g_{*},f\left(g_{*}\right)h_{i}\right)$
holds, applying the second half of $\varphi_{1}$ with $x'=g_{\dagger}^{k+1}$,
by (1) we get some $j=j_{i,k}<m$ such that $y'=\left(f\left(g_{\dagger}\right)c_{j}\right)^{k+1}$
works: $\varphi_{0}\left(g_{*}g_{\dagger}^{k+1},f\left(g_{*}\right)h_{i}\left(f\left(g_{\dagger}\right)c_{j}\right)^{k+1}\right)$
holds.
\item By (2) and (3) we find that for all $i<m$ and $k<K$ there is some
$j'=j'_{i,k}$ such that $f\left(g_{*}\right)\left(f\left(g_{\dagger}\right)c_{j'}\right)^{k+1}=f\left(g_{*}\right)h_{i}\left(f\left(g_{\dagger}\right)c_{j}\right)^{k+1}$
which implies 
\[
\left(f\left(g_{\dagger}\right)c_{j'}\right)^{k+1}=h_{i}\left(f\left(g_{\dagger}\right)c_{j}\right)^{k+1}.
\]
\end{enumerate}
Assume towards contradiction that $m>1$ and fix $i=1$. For all $k<K$
we found some $j_{1,k}$ and $j'_{1,k}$, both $<m$, and as $h_{1}\neq1$,
they must be distinct by (4). As $K=m^{2}+1$, by pigeonhole there
must be $k<k'$ such that $\left(j_{1,k},j'_{1,k}\right)=\left(j_{1,k'},j'_{1,k'}\right)$.
Denote this pair by $\left(j,j'\right)$. Let $a:=f\left(g_{\dagger}\right)c_{j'}$,
$b:=f\left(g_{\dagger}\right)c_{j}$. So we have $a^{k+1}=h_{1}b^{k+1}$
and $a^{k'+1}=h_{1}b^{k'+1}$, hence substituting $h_{1}=a^{k+1}\left(b^{k+1}\right)^{-1}$
in the second equation we get $a^{k'-k}=b^{k'-k}$. As $G$ is an
$R$-group, $a=b$, so $j=j'$ --- contradiction.
\end{proof}

%%%%%%%%%%%%%%%%%%%%%%%%%%%%%%%%%%%%%%%

\begin{lem}
\label{lem_homomorphism_preservation_small_quotients} 
Suppose that $N$ is an $\omega$-saturated structure $0$-expanding a structure $M$ that expands an abelian group $\left( G, + \right)$ with small quotients.
Let $f : G \to G$ be a group homomorphism definable in $N$. If there exists a finite set $F \subseteq G$ such that for every $g\in G$, $f\left(g\right) \in \acl_{M}\left(Fg\right)$, then $f$ is definable in $M$.

In particular, if  $\dcl_{N} \subseteq \acl_{M}$, then every group homomorphism $f : G \to G$ definable in $N$ is definable in $M$.
\end{lem}

\begin{proof}
The proof is an elaboration of the idea of the proof of \cref{lem_homomorphism_preservation_R_groups}.
Exactly as in there, let $L$ be the language of $M$ and $L'$ the
language of $N$, we may assume that $L \subseteq L'$, 
that $M$ $0$-expands $\left( G, + \right)$, that $f$ is $\emptyset$-definable in $N$, and that for every $g\in G$, $f\left(g\right)\in\acl_{M}\left(g\right)$.

By compactness, we obtain an $L$-formula $\varphi\left(x,y\right)$
(over $M$) containing the graph of $f$ and such that for $\varphi\left(g,M\right)$
is uniformly bounded. Again, we let $m_{\varphi}:=\max\set{\left|\varphi\left(g,M\right)\right|}{g\in G}$
and $m:=\min\set{m_{\varphi}}{\varphi\text{ as above}}$. Let $\varphi_{0}$
be such that $m_{\varphi_{0}}=m$. Again, we let $K:=m^{2}+1$.

Given an $L$-formula $\psi\left(x,y\right)$ (over $M$) and a group
$H\leq G$, say that $\psi$ \emph{contains (the graph of) $f$ up
to $H$ }if $\psi\left(G\right)+\left(\left\{ 0\right\} \times H\right)$
contains $f$, i.e., whenever $f\left(g\right)=g'$, for some $e\in H$,
$M\models\psi\left(g,g'+e\right)$. For a set $C\subseteq G$, let
$C/H := \set{c+H}{c \in C}.$ Let $m_{\psi,H} := \sup\set{\left|\psi\left(g,M\right)/H\right|}{g\in G}$.
For $0<t<\omega$, let $m_{\psi,t}:=m_{\psi,H_{t}}$ where $H_{t}:=\set {g}{tg=0}$.
In this notation, for $\varphi$ as in the previous paragraph, $m_{\varphi}=m_{\varphi,1}$.
 Thus:
\[
m_{*}:=\min\set{m_{\psi,t}}{1\leq t<\omega,\psi\text{ contains }f\text{ up to }H_{t}}\leq m.
\]

If $m_{*}=1$, then we are done: let $\psi,t$ be such that $1=m_{\psi,t}$.
It follows that $\psi\left(g,f\left(g\right)+e\right)$ for some $e\in H_{t}$
and that if $\psi\left(g,h_{1}\right),\psi\left(g,h_{2}\right)$ hold
then $t\left(h_{1}-h_{2}\right)=0$. Let $\psi'\left(x,y\right):=\exists x'y'\left(x=tx'\land y=ty'\land\psi\left(x',y'\right)\right)$.
Then, for all $g\in tG$, $\psi'\left(g,f\left(g\right)\right)$ holds,
as witnessed by $g',h'$ where $tg'=g$ and $h'=f\left(g'\right)+e$
for some $e\in H_{t}$. Conversely, for such $g$, if $\psi'\left(g,h\right)$
holds, then there are $g',h'$ such that $\psi\left(g',h'\right)$
and $tg'=g,th'=h$. As $m_{\psi,t}=1$, It follows that $h'-f\left(g'\right)\in H_{t}$
so that $h=tf\left(g'\right)=f\left(g\right)$. Together we got that
$\psi'$ defines $f$ on $tG$. But since $tG$ has finite index in
$G$, we can now define $f$ on all~$G$: for every coset $C=tG+h$,
$f\restriction C$ is defined by $f\left(g\right)=f\left(g-h\right)+f\left(h\right)$,
i.e., by $\psi'\left(x-h,y-f\left(h\right)\right)$.

({*}) Suppose that $\psi,t$ are any pair such that $m_{*}=m_{\psi,t}$.
Note that if $t$ divides $t'$ then $\psi$ contains $f$ up to
$H_{t'}$ and $m_{\psi,t'}\leq m_{\psi,t}$ (because $H_{t}\leq H_{t'}$)
so $m_{\psi,t'}=m_{*}$. In addition, if $\psi'$ contains $f$ up
to $H_{t}$ and implies $\psi$ up to $H_{t}$ in the sense that $\psi\left(G\right)+\left(\left\{ 0\right\} \times H_{t}\right)\supseteq\psi'\left(G\right)$
(i.e., $\psi'\left(x,y\right)\to\exists e\in H_{t}\psi\left(x,y+e\right)$
holds), $m_{\psi',t}\leq m_{\psi,t}$ so $m_{\psi',t}=m_{*}$. 

Fix some $\psi_{0},t$ such that $m_{*}=m_{\psi_{0},t}$.

Let $H:=H_{t}$ and let $H':=H_{K!t}$. By ({*}), $m_{\psi_{0},H'}=m_{*}=m_{\psi_{0},H}$.

Let 
\[
\psi_{1}\left(x,y\right):=\psi_{0}\left(x,y\right)\land\forall x'\exists y'\exists e\in H\left(\psi_{0}\left(x',y'\right)\land\psi_{0}\left(x+x',y+y'+e\right)\right).
\]

Note that $\psi_{1}$ contains $f$ up to $H$: if $f\left(g\right)=h$,
then, for some $e\in H_{t}=H$, $\psi_{0}\left(g,h+e\right)$ and for
all $g'$, let $h':=f\left(g'\right)$. Then for some $e',e''\in H_{t}$,
$\psi_{0}\left(g',h'+e'\right)$ and $\psi_{0}\left(g+g',h+h'+e''\right)$.
So 
\[
M\models\psi_{0}\left(g,h+e\right)\land\psi_{0}\left(g',h'+e'\right)\land\psi_{0}\left(g+g',h+e+h'+e'+\left(e''-e-e'\right)\right)
\]
 so $\psi_{1}\left(g,h+e\right)$ holds. By ({*}), $m_{\psi_{1},H}=m_{*}$.

Let $g_{*}$ be such that $m_{*}=\left|\psi_{1}\left(g_{*},M\right)/H\right|$.
Enumerate it as $\set{f\left(g_{*}\right)+h_{i}+H}{i<m_{*}}$ where
$h_{0}\in H$.

Let 
\[
\psi_{2}\left(x,y\right):=\bigwedge_{1\leq k<K+1}\exists e_{k,0}e_{k,1}\in H\psi_{0}\left(kx,ky+e_{k,0}\right)\land\psi_{0}\left(g_{*}+kx,f\left(g_{*}\right)+ky+e_{k,1}\right).
\]
Then $\psi_{2}$ also contains $f$ up to $H$ (by a similar argument
as above) and implies $\psi_{0}$ up to $H$, so $m_{\psi_{2},H'}=m_{\psi_{2},H}=m_{*}$
by ({*}). Let $g_{\dagger}$ be such that $m_{*}=\left|\psi_{2}\left(g_{\dagger},M\right)/H'\right|$
and let $\set{f\left(g_{\dagger}\right)+c_{i}+H'}{i<m_{*}}$ be an
enumeration where $c_{0}\in H'$. Then it also follows that $m_{*}=\left|\psi_{2}\left(g_{\dagger},M\right)/H\right|$
and that $\set{f\left(g_{\dagger}\right)+c_{i}+H}{i<m_{*}}=\psi_{2}\left(g_{\dagger},M\right)/H$.

Note the following points:
\begin{enumerate}
\item For all $1\leq k\leq K$, $\psi_{0}\left(kg_{\dagger},M\right)/H=\set{k\left(f\left(g_{\dagger}\right)+c_{i}\right)+H}{i<m_{*}}$.
Why? as for all $i<m_{*}$, for some $e\in H$, $\psi_{2}\left(g_{\dagger},f\left(g_{\dagger}\right)+c_{i}+e\right)$
holds, we get $\supseteq$ by the definition of $\psi_{2}$. On
the other hand for $i\neq j$, $kc_{i}-kc_{j}=k\left(c_{i}-c_{j}\right)\notin H$
because otherwise $tk\left(c_{i}-c_{j}\right)=0$ so $c_{i}-c_{j}\in H'$
which is false since they belong to different cosets. This implies
that the set on the right has size $m_{*}$ and as $m_{*}=m_{\psi_{0},t}=\max\set{\left|\psi_{0}\left(g,M\right)/H\right|}{g\in G}$,
we get $\subseteq$ and equality.
\item For all $1\leq k\leq K$, $\psi_{0}\left(g_{*}+kg_{\dagger},M\right)/H=\set{f\left(g_{*}\right)+k\left(f\left(g_{\dagger}\right)+c_{i}\right)+H}{i<m_{*}}$
(for the same reason as (1)).
\item For $i<m_{*}$ and $1\leq k\leq K$, as $\psi_{1}\left(g_{*},f\left(g_{*}\right)+h_{i}+e\right)$
holds for some $e\in H$, applying the second half of $\psi_{1}$
with $x'=kg_{\dagger}$, by (1) we obtain some $j=j_{i,k}<m_{*}$ such
that $y'=k\left(f\left(g_{\dagger}\right)+c_{j}\right)+e'$ for some
$e'\in H$ works: $\psi_{0}\left(g_{*}+kg_{\dagger},M\right)/H$ contains
$f\left(g_{*}\right)+h_{i}+k\left(f\left(g_{\dagger}\right)+c_{j}\right)+H$.
\item By (2) and (3) we obtain that, for all $i<m_{*}$ and $1\leq k\leq K$
there is some $j'=j'_{i,k}$ such that $f\left(g_{*}\right)+k\left(f\left(g_{\dagger}\right)+c_{j'}\right)-f\left(g_{*}\right)-h_{i}-k\left(f\left(g_{\dagger}\right)+c_{j}\right)\in H$
which implies 
\[
k\left(c_{j'}-c_{j}\right)\in h_{i}+H.
\]
\end{enumerate}
Assume towards contradiction that $m_{*}>1$. As $K=m^{2}+1\geq m_{*}^{2}+1$,
for $i=1$, we get $1\leq k<k'\leq K$ such that $j=j_{1,k}=j_{1,k'}$
and $j'=j'_{1,k}=j'_{1,k'}$. As $h_1 \notin H$, $j \neq j'$. By (4), $k'\left(c_{j'}-c_{j}\right)-k\left(c_{j'}-c_{j}\right)\in H$
so that $\left(k'-k\right)\left(c_{j'}-c_{j}\right)\in H$, i.e.,
$t\left(k'-k\right)\left(c_{j'}-c_{j}\right)=0$, so $c_{j'}-c_{j}\in H'$,
contradiction.
\end{proof}

\begin{exmp}
\label{example_showing_saturation_is_required}
The following exemplifies the need for saturation.  

Let $N:=\left(\Q\left(\pi\right),f_{\pi},+\right)$ and $M:=\left(\Q\left(\pi\right),+\right)$
where $f_{\pi}:\Q\left(\pi\right)\to\Q\left(\pi\right)$ is multiplication
by $\pi$. Let $M':=M_{M},N':=N_{N}$ (i.e., the structures we get by naming all elements). Clearly, $M'$ is a $0$-reduct of $N'$.
Also, since $\dcl_{M'} \left( \emptyset \right) = \acl_{M'} \left( \emptyset \right) = \Q \left( \pi \right)$, we have $\dcl_{N'} \subseteq \acl_{M'}$.

However, $f_{\pi}$ is not definable in $M'$:  Let $N^{*}\succ N'$
be a proper extension of $N'$. Let $e\in N^{*}\backslash N'$. Let
$M''$ be the vector space generated by $\Q\left(\pi\right)$ and
$e$ over $\Q$. Then $M''\succ M'$ by quantifier elimination for
divisible abelian groups. However, $f_{\pi}\left(e\right)\notin M''$
(otherwise $N^{*}\models f_{\pi}\left(e\right)=v+qe$ where $v\in\Q\left(\pi\right)$
and $q\in\Q$, but $v/\left(\pi-q\right)\in N$ and hence for some
$e'\in N'$ such that $e'\neq v/\left(\pi-q\right)$, $N'\models\pi e'=v+qe'$,
a contradiction). Hence, it cannot be that $f_{\pi}$ is definable
in $M'$ (suppose that $\varphi\left(x,y\right)$ defines it. Let
$M^{*}$ be the reduct of $N^{*}$ to the language of $M$, so that
$M''\prec M^{*}$ and $\varphi^{M^{*}}=\varphi^{N^{*}}$. Then $N^{*}\models\forall x\forall y\varphi\left(x,y\right)\leftrightarrow f_{\pi}\left(x\right)=y$,
and since $M''\models\forall x\exists y\varphi\left(x,y\right)$,
it follows that if $M''\models\varphi\left(e,e'\right)$ then $M^{*}\models\varphi\left(e,e'\right)$
and hence $e'=f_{\pi}\left(e\right)$, a contradiction.)
\end{exmp}

\begin{exmp}
\label{example_showing_0_expansion_is_required}
The following shows that in \cref{lem_homomorphism_preservation_R_groups,lem_homomorphism_preservation_small_quotients} it is not enough to assume that $N$ is just an expansion of $M$, instead of a $0$-expansion.
Continuing \cref{example_showing_saturation_is_required}, let $\cN \succ N$ be $\omega$-saturated, and let $\cM \succ M$ be the reduct of $\cN$ to the language of $M$. Let $\cM'$ be the structure obtained from $\cM$ by naming all elements. Then $\dcl_{\cN} \subseteq \acl_{\cM'}$. However, $f_{\pi}^{\cN}$ is not definable in $\cM'$ (as otherwise, $f_{\pi}$ would be definable in $M$).
\end{exmp}

Combining \cref{lem_homomorphism_preservation_small_quotients} and \cref{Loveys_on_weakly_minimal_one_based} we get:

\begin{prop}
\label{corr_of_Loveys_and_preservation_of_homomorphisms}
Suppose that $M$ is a $0$-reduct of an $\omega$-saturated structure $N$, both $0$-expanding an abelian group $\left( G, + \right)$ with small quotients.
Suppose that: 
\begin{enumerate}
\item $\dcl_{N} \subseteq \acl_{M}$, and 
\item $N$ is weakly-minimal and 1-based.
\end{enumerate}
Then $M$ and $N$ are interdefinable.
\end{prop}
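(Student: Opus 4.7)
The plan is to combine the two main ingredients already developed earlier in the excerpt: Fact \ref{Loveys_on_weakly_minimal_one_based} (Loveys' theorem for weakly-minimal, $1$-based $0$-expansions of abelian groups of unbounded exponent) and \cref{lem_homomorphism_preservation_small_quotients} (the preservation lemma for group homomorphisms). First I would verify that $\roundedb{G,+}$ has unbounded exponent: since $N$ is weakly-minimal it has $U$-rank $1$, so $G$ must be infinite, and then the remark following \cref{def_small_quotients_and_torsion} together with the small-quotients hypothesis gives unbounded exponent.

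Next I would apply Fact \ref{Loveys_on_weakly_minimal_one_based} to $N$: letting $R$ denote the ring of all endomorphisms of $\roundedb{G,+}$ that are definable in $N$, and setting $N' := \roundedb{G, +, f : f \in R}$, we obtain that $N$ and $N'$ are interdefinable. I would then apply \cref{lem_homomorphism_preservation_small_quotients} to deduce that every $f \in R$ is already definable in $M$; all of its hypotheses, namely $\omega$-saturation of $N$, $N$ being a $0$-expansion of $M$, $M$ expanding an abelian group with small quotients, and $\dcl_N \subseteq \acl_M$, are directly part of our assumptions.

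Putting this together, every symbol in the language of $N'$ (i.e., $+$ and each $f \in R$) is definable in $M$, so $N'$ is a reduct of $M$; by the interdefinability of $N$ with $N'$, this means $N$ is a reduct of $M$. Since $M$ is a $0$-reduct (hence a reduct) of $N$ by assumption, we conclude that $M$ and $N$ are interdefinable. I expect no substantial obstacle here: once the two deep ingredients are available, the proposition is essentially a one-step composition of them, and the only care needed is the routine verification that each ingredient's hypotheses line up with the proposition's assumptions, which they do verbatim.
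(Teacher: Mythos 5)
Your proposal is correct and follows essentially the same route as the paper's proof: apply \cref{lem_homomorphism_preservation_small_quotients} to transfer $N$-definable endomorphisms to $M$, note unbounded exponent via small quotients, and then invoke \cref{Loveys_on_weakly_minimal_one_based}. The only difference is presentational — you spell out the reduction chain $N \leftrightarrow N' \to M$ explicitly and you take slightly more care justifying that $G$ is infinite before invoking the remark that small quotients plus infinite gives unbounded exponent; the paper leaves this implicit.
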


\begin{proof}
The statement is clear when $M$ is finite, so we may assume it is infinite. By \cref{lem_homomorphism_preservation_small_quotients}, every endomorphism of the group $\left( G, + \right)$ that is definable in $N$ is definable in $M$. Also note that since $\left( G, + \right)$ has small quotients, it has unbounded exponent. The conclusion now follows from \cref{Loveys_on_weakly_minimal_one_based}
\end{proof}

% \section{Main theorem} \label{sec:main theorem}
\section{Proof of \texorpdfstring{\cref{main_theorem_1_intro}}{\ref{main_theorem_1_intro}}} \label{sec:main theorem}

\begin{thm}
\label{main_theorem}
Let $M$ be a weakly-minimal and 1-based structure expanding an abelian group $\left( G,+ \right)$ with small quotients.
Let $N$ be a $0$-expansion of $M$ such that:
\begin{enumerate}
\item $N$ is $\omega$-saturated.
\item Every unary subset $X \subseteq G$ that is definable in $N$ is also definable in $M$.
\item $\acl_M = \acl_N$.
\end{enumerate}
Then $M$ and $N$ are interdefinable.
\end{thm}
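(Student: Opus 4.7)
The plan is to verify the hypotheses of Proposition \ref{corr_of_Loveys_and_preservation_of_homomorphisms}, which will then deliver that $M$ and $N$ are interdefinable. Since condition (3) gives $\dcl_N \subseteq \acl_N = \acl_M$, the requirement $\dcl_N \subseteq \acl_M$ is immediate, and after possibly naming finitely many parameters defining the group operation we may assume both $M$ and $N$ are $0$-expansions of $(G,+)$. What remains is to show that $N$ is weakly-minimal and $1$-based.

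For weak minimality of $N$, I would invoke Fact \ref{weakly_minimal_equiv_conditions}(2). By Observation \ref{obs_not_adding_unary_subsets_enough_to_check_for_saturated_enough}, condition (2) lifts to every elementary extension, so in a monster $\mathcal{N}$ with $L_M$-reduct $\mathcal{M}$, every unary $\mathcal{N}$-definable set is already $\mathcal{M}$-definable. Given a non-algebraic $L_N$-formula $\phi(x,b)$ with $|x|=1$, rewrite $\phi(\mathcal{N},b) = \psi(\mathcal{N},b')$ for some $L_M$-formula $\psi$ and parameters $b' \in \mathcal{N}$. The formula $\psi(x,b')$ is also non-algebraic, so by weak minimality of $M$ and Fact \ref{weakly_minimal_equiv_conditions}(2), $\psi$---and hence $\phi$---is realized in every model of $\Th{N}$. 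Fact \ref{weakly_minimal_equiv_conditions}(2) applied to $N$ then yields weak minimality.

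The main obstacle is $1$-basedness of $N$. Once weak minimality holds in both structures, Fact \ref{u_rank_1_acl_facts}(3) gives that forking independence on real tuples coincides with $\acl$-independence in each; combined with the monster-lifted version of $\acl_M = \acl_N$ from Observation \ref{obs_equality_of_acl_operators_enough_to_check_for_saturated_enough}, this shows the real-tuple forking relations in $M$ and $N$ are identical. To transfer $1$-basedness through Proposition \ref{equivalent_cond_for_1_based_under_gei} I need GEI in both structures, which I would obtain by naming constants. Replacing $N$ by a sufficiently saturated elementary extension (harmless for the final conclusion by Observation \ref{interdefinability_passes_to_elementary_substructures}), I would then name a small elementary submodel $N_0 \preceq N$ as constants. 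In the resulting $0$-expansions $M^*$ and $N^*$ we have $\acl_{M^*}(\emptyset) = \acl_{N^*}(\emptyset) = N_0$, which is a model of each, so Fact \ref{weakly_minimal_and_acl0_is_a_model_implies_wei} yields WEI (and hence GEI) in both. Naming constants preserves weak minimality and $1$-basedness, so $M^*$ is still $1$-based; Proposition \ref{equivalent_cond_for_1_based_under_gei} then gives the real-tuple independence condition in $M^*$, which transfers to $N^*$ by the forking agreement, and reapplying the proposition to $N^*$ yields that $N^*$, and therefore $N$, is $1$-based. Proposition \ref{corr_of_Loveys_and_preservation_of_homomorphisms} now concludes the proof.
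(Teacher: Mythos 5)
Your proposal is correct and follows essentially the same route as the paper: add parameters to 0-expand $(G,+)$, pass to a monster and name a small elementary submodel as constants, deduce weak minimality of $N$ from \cref{weakly_minimal_equiv_conditions} via the agreement of unary definable sets, obtain WEI/GEI from \cref{weakly_minimal_and_acl0_is_a_model_implies_wei}, transfer real-tuple forking via \cref{u_rank_1_acl_facts}(3) and the shared $\acl$, push $1$-basedness across with \cref{equivalent_cond_for_1_based_under_gei}, and conclude with \cref{corr_of_Loveys_and_preservation_of_homomorphisms}. The only difference is presentational (you state in advance that the goal is to verify the hypotheses of \cref{corr_of_Loveys_and_preservation_of_homomorphisms}), but the substance and the key citations are the same as in the paper.
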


\begin{proof}

The group operation is definable in $M$, possibly using finitely many parameters. By naming these parameters in $M$ and $N$, we may assume that $M$ $0$-expands $\left( G, \cdot \right)$. Since we named only finitely many elements, $N$ remains $\omega$-saturated.

We show that it is enough to prove the theorem when $N$ is a monster model for $\Th{N}$. Thus, let $\cN$ be a monster model for $\Th{N}$, and let $\cM := \cN \restriction L$. Clearly, $\cM$ is a weakly-minimal and 1-based structure $0$-expanding an abelian group $\left( \cG,+ \right)$. By \cref{small_quotients_or_torsion_are_elementary_properties}, $\cG$ has small quotients. By \cref{obs_not_adding_unary_subsets_enough_to_check_for_saturated_enough}, every unary subset $X \subseteq \cG$ that is definable in $\cN$ is also definable in $\cM$. By \cref{obs_equality_of_acl_operators_enough_to_check_for_saturated_enough}, $\acl_{\cM} = \acl_{\cN}$. So, by the theorem for the case of monster models, we find that $\cM$ and $\cN$ are interdefinable. By \cref{interdefinability_passes_to_elementary_substructures}, $M$ and $N$ are interdefinable.

So, without loss of generality, we may assume that $N$ is a monster model for $\Th{N}$. 
Let $N_0 \prec N$ be a small model and denote by $M'$ and $N'$ the structures obtained from $M$ and $N$ by adding constants for all elements of $N_0$. All assumptions still hold and $N'$ is still a monster model for its theory. So we may assume that $M=M'$ and $N=N'$.
Since every unary subset $X \subseteq G$ which is definable in $N$ is also definable in $M$, and since $M$ is weakly-minimal, by \cref{weakly_minimal_equiv_conditions} (\ref*{weakly_minimal_equiv_conditions_intersection_every_model}) we get that $N$ is also weakly-minimal. Since $\acl_N \left( \emptyset \right) = N_0$ is a model of $\Th{N}$, by \cref{weakly_minimal_and_acl0_is_a_model_implies_wei} $\Th{N}$ has WEI and hence also GEI. Similarly, $\acl_M \left( \emptyset \right) = N_0 \restriction L \prec M$, hence $\Th{M}$ has GEI.

Denote by $\ind^M$ and $\ind^N$ the ternary relations of forking independence in $M,N$, respectively, and denote by $\ind^{M,a}$ and $\ind^{N,a}$ the ternary relations of $\acl$-independence in $M,N$, respectively (see \cref{def:acl-independence}). We claim that for every (real) tuples $a,b \in G$ and $C \subseteq G$, $a \ind^{M}_C b \iff a \ind^{N}_C b$. Indeed, since both $M$ and $N$ are weakly minimal, by \cref{u_rank_1_acl_facts} (\ref*{u_rank_1_forking_indep_coincides_with_acl_indep}) we have 
$a \ind^{M}_C b \iff a \ind^{M,a}_C b$ 
and 
$a \ind^{N}_C b \iff a \ind^{N,a}_C b$.
But $\acl_M = \acl_N$, hence $a \ind^{M,a}_C b \iff a \ind^{N,a}_C b$.

It now follows that $\Th{N}$ is 1-based: Since $\Th{M}$ is 1-based and has GEI, by \cref{equivalent_cond_for_1_based_under_gei} for every (real) tuples $a,b \in G$ we have 
$a \underset{\acl_M \left( a \right) \cap \acl_M \left( b \right)}{\ind^{\mathclap{M}}} b$. But $\acl_M = \acl_N$, so 
$a \underset{\acl_N \left( a \right) \cap \acl_N \left( b \right)}{\ind^{\mathclap{M}}} b$, and by the previous paragraph we obtain 
$a \underset{\acl_N \left( a \right) \cap \acl_N \left( b \right)}{\ind^{\mathclap{N}}} b$. Since $\Th{N}$ has GEI, by \cref{equivalent_cond_for_1_based_under_gei} $\Th{N}$ is 1-based.

Now all the assumptions of \cref{corr_of_Loveys_and_preservation_of_homomorphisms} are satisfied, therefore $M$ and $N$ are interdefinable.
\end{proof}

\begin{rem}
In \cref{main_theorem}, if $N$ is $\pipes{L}^{+}$-saturated, where $L$ is the language of $M$, then it is enough to assume that $N$ is an expansion of $M$ instead of a $0$-expansion.
\end{rem}

\begin{proof}
There exists a subset $A \subseteq G$ of size $\pipes{A} \le \pipes{L}$ such that every set that is $\emptyset$-definable in $M$ is also $A$-definable in $N$.
Denote by $M'$ and $N'$ the structures obtained from $M$ and $N$ by adding constants for all elements of~$A$. Then $N'$ is also $\pipes{L}^{+}$-saturated, and all the assumptions of \cref{main_theorem} still hold for $M'$ and $N'$. So by \cref{main_theorem} $M'$ and $N'$ are interdefinable and therefore $M$ and $N$ are interdefinable.
\end{proof}

Note that if $M = \roundedb{G,+}$ is itself an abelian group with small quotients and small torsion, then (by \cref{pure_abelian_group_is_1_based}) it is already 1-based and (by \cref{abelian_group_with_small_quotients_and_small_torsion_is_weakly_minimal}) weakly-minimal.

By \cref{for_ab_group_of_fin_U_rk_small_torsion_equiv_small_quotients} we get:

\begin{cor}
\label{main_theorem_for_small_torsion}
\cref{main_theorem} holds if we assume that $\left( G,+ \right)$ has small torsion rather than small quotients.
\end{cor}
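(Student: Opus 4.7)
The plan is to reduce directly to \cref{main_theorem} by invoking \cref{for_ab_group_of_fin_U_rk_small_torsion_equiv_small_quotients}. The hypothesis that $M$ is weakly-minimal means that $U(M) = 1$, so the universe $G$, viewed as a definable set in $M$, has finite $U$-rank. Since all the operations and structure on $G$ are definable in $M$, this finiteness of $U$-rank is exactly the input needed for \cref{for_ab_group_of_fin_U_rk_small_torsion_equiv_small_quotients}.

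Concretely, I would argue as follows. Assume all the hypotheses of \cref{main_theorem_for_small_torsion}, namely that $M$ is weakly-minimal and $1$-based expanding an abelian group $(G,+)$ which now has small torsion, and that $N$ is a $0$-expansion of $M$ satisfying conditions (1)--(3) of \cref{main_theorem}. By weak-minimality of $M$, $(G,+)$ has finite $U$-rank (indeed $U$-rank $1$) when regarded as the abelian group underlying $M$. Applying \cref{for_ab_group_of_fin_U_rk_small_torsion_equiv_small_quotients}, the small torsion assumption gives small quotients. All the hypotheses of \cref{main_theorem} are now in force, and the conclusion (that $M$ and $N$ are interdefinable) follows immediately.

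There is essentially no obstacle: the main theorem has been proved, and the equivalence between small torsion and small quotients under finite $U$-rank has been established already in \cref{for_ab_group_of_fin_U_rk_small_torsion_equiv_small_quotients}. The only thing to be careful about is that the lemma is stated for abelian groups of finite $U$-rank as abelian groups, but since $U$-rank is monotone under reducts (a $0$-definable bijection in the reduct remains a $0$-definable bijection in the expansion, and $U$-rank is computed in a structure $M$ based on the lattice of definable sets), the finiteness of $U$-rank for $M$ transfers to the reduct $(G,+)$; alternatively, the proof of \cref{for_ab_group_of_fin_U_rk_small_torsion_equiv_small_quotients} uses only the Lascar inequalities applied to the multiplication-by-$n$ map, which makes sense in $M$ as well, so small torsion and small quotients coincide for $G$ regarded in $M$.
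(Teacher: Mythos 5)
Your reduction to \cref{main_theorem} via \cref{for_ab_group_of_fin_U_rk_small_torsion_equiv_small_quotients} is exactly the paper's (one-line) proof, and it is correct. One caveat: the parenthetical remark that ``$U$-rank is monotone under reducts'' is not a general fact and should not be invoked; the clean justification is the one you offer as an alternative --- the Lascar-inequality argument in \cref{for_ab_group_of_fin_U_rk_small_torsion_equiv_small_quotients} runs verbatim inside $M$, where $U^{M}(G)=1$ is given by weak minimality --- or, equally well, one can note that every $pp$-definable subgroup of $(G,+)$ is $M$-definable, hence finite or of finite index, so $(G,+)$ is itself weakly minimal by \cref{fact_when_is_a_quasi_abelian_structure_weakly_minimal}.
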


By \cref{interdefinability_passes_to_elementary_substructures} we get:

\begin{cor}
\label{cor_main_theorem_passing_to_elementary_extensions}
Let $M$ be a structure and $N$ be a $0$-expansion of $M$. Suppose that there are elementary extensions $M' \succ M$ and $N' \succ N$ that satisfy the assumptions of \cref{main_theorem} or \cref{main_theorem_for_small_torsion}. Then $M$ and $N$ are interdefinable.
\end{cor}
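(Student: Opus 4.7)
The plan is to combine the hypothesis with \cref{main_theorem} (or \cref{main_theorem_for_small_torsion}) and then invoke \cref{interdefinability_passes_to_elementary_substructures} to transfer the resulting interdefinability back to $M$ and $N$. The content here is really just bookkeeping: the corollary is a transfer statement, so nothing new has to be proved about groups, forking, or homomorphisms.

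First I would invoke the convention set up in \cref{subs:exapnsions and reducts}: since $N$ is a $0$-expansion of $M$, we may assume $L_M \subseteq L_N$ and that $M$ is literally the reduct of $N$ to $L_M$. Applying the same convention to the pair $(M', N')$, we may further assume $L_{M'} = L_M$, $L_{N'} = L_N$, and that $M'$ is the reduct of $N'$ to $L_M$. This is the one mildly delicate point, because \cref{interdefinability_passes_to_elementary_substructures} needs $M'$ to genuinely be the $L_M$-reduct of $N'$; but the language conventions above guarantee this and place us in the hypothesis of that observation.

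Next, by assumption the pair $(M', N')$ satisfies the full list of hypotheses of \cref{main_theorem} or of \cref{main_theorem_for_small_torsion} (i.e., $M'$ is weakly-minimal and $1$-based, $(G',+)$ has small quotients or small torsion, $N'$ is $\omega$-saturated, new unary definable sets are not introduced, and $\acl_{M'}=\acl_{N'}$). Applying whichever of the two theorems applies yields directly that $M'$ and $N'$ are interdefinable.

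Finally, since $M' \succ M$, $N' \succ N$, $M'$ is the $L_M$-reduct of $N'$, and $M'$ and $N'$ are interdefinable, \cref{interdefinability_passes_to_elementary_substructures} gives at once that $M$ and $N$ are interdefinable, finishing the proof. I do not foresee any real obstacle; the only thing to be a bit careful about is matching the language conventions so that the elementary-substructure observation applies cleanly.
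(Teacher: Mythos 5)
Your proof is correct and is exactly the paper's intended argument: the paper's entire proof is the remark ``By \cref{interdefinability_passes_to_elementary_substructures} we get:'' preceding the corollary, and you have simply unpacked that citation — apply \cref{main_theorem} (or \cref{main_theorem_for_small_torsion}) to $M', N'$ and then transfer interdefinability down via \cref{interdefinability_passes_to_elementary_substructures}. The care you take with language conventions so that $M'$ really is the $L_M$-reduct of $N'$ is the right thing to check and is settled exactly as you say.
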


\begin{exmp}
The structures $M'$ and $N'$ from \cref{example_showing_saturation_is_required} show that some saturation is required in \cref{main_theorem}. Both $M'$ and $N'$ are strongly-minimal: therefore, every unary subset that is definable in $N'$ is also definable in $M'$. The remaining requirements of \cref{main_theorem} (except for saturation) are clearly satisfied.
\end{exmp}
The following example shows that \cref{main_theorem} is not true if we remove the assumption of small quotients.
\begin{exmp} \label{exa:need small quotients}
    Let $F$ be the field $\mathbb{F}_2$ with two elements. Let $\alpha$ satisfy the (irreducible) polynomial equation $X^2 + X + 1 =0$, and let $K = F\roundedb{\alpha}$ (a fields with 4 elements: $0,1,\alpha,1+\alpha$). Let $V$ be an infinite countable vector space over $K$, and fix some nonzero $c\in V$. 
    Let $N$ be the structure $\roundedb{V,+,\lambda_\alpha,c}$ where $\lambda_{\alpha}\roundedb{v}=\alpha v$ (scalar multiplication). 
    Let $M = \roundedb{V,+,U,c}$ where $U$ is a binary relation defined by: 
    $U\roundedb{u,v}$ iff 
    $v \in \spn[F]{\alpha u,c,\alpha c}$.
    Then $M$ expands an abelian group $\roundedb{V,+}$, is a 0-reduct of $N$ and all three structures are $\omega$-categorical (so saturated) and strongly-minimal (so weakly minimal). Additionally, by \cite[Chapter 2, Theorem 5.12]{PillayGST} or \cite[Lemma 4.3, Theorem 4.2]{BaysGST}, since $M$ is both strongly-minimal and $\omega$-categorical, $M$ is 1-based (alternatively, by \cite[Chapter 4, Proposition 6.3]{PillayGST}, since $M$ is a reduct of a 1-based structure of finite $U$-rank, $M$ is 1-based). Finally, for any $a\in V$, $\acl_M\roundedb{a}$ contains $\alpha a$ (since it is algebraic over $a$ because of the choice of $U$), so (since $c$ is named as a constant) for any set $A$, $\acl_M\roundedb{A}$ contains $\spn[F]{Ac \cup \alpha \roundedb{Ac}} = \spn[K]{Ac}=\acl_N(A)$. The other containment is clear so we have $\acl_M = \acl_N$ as required. 
    However, $\lambda_\alpha$ is not definable in $M$ as we now show. 

    Assume towards contradiction that $\lambda_\alpha$ is definable in $M$. Let $B$ be a basis for $V$ over $K$ containing $c$, so that $C:=B \cup \alpha B$ is a basis for $V$ over $F$. Since $\lambda_\alpha$ is definable over some finite set of parameters, there exists a finite set $c \in B_0 \subseteq B$ such that $\lambda_{\alpha}$ is definable over $B_0 \cup \alpha B_0$. Choose $a \in B \setminus B_0$. Note that $\curlyb{\alpha a +c} \cup C \setminus \curlyb{\alpha a}$ is still a basis for $V$ over $F$. Let $T:V\to V$ be an automorphism of $V$ as an $F$-vector space fixing $C\setminus \curlyb{\alpha a}$ while $T\roundedb{\alpha a} = \alpha a + c$. We claim that $T$ is an automorphism of $M$. This will suffice since $T$ clearly does not preserve $\lambda_\alpha$. 

   To show that $T$ is an automorphism of $M$, notice that it fixes $c$ and preserves the addition operation, so we are left to show that it preserves $U$. Note that (*) for any $u \in V$, if $u$ does not contain $\alpha a$ in its (unique) representation as a linear combination of elements of $C$ over $F$, then $T\roundedb{u} = u$. Otherwise, $T\roundedb{ u}=u +c$. Suppose that $U\roundedb{u,v}$ holds. Then $v \in S:=\spn[F]{\alpha u,c,\alpha c}$. Since $T$ fixes $c,\alpha c$, it follows from (*) that $T\roundedb{v}\in T\roundedb{S} = S$. On the other hand, (*) implies that $\spn[F]{\alpha T\roundedb{u},c, \alpha c}=S$ as well. Together, it follows that $T\roundedb{v} \in T\roundedb{S} = S = \spn[F]{\alpha T\roundedb{u},c, \alpha c}$ which implies that $U\roundedb{T\roundedb{u},T\roundedb{v}}$ holds. As $T = T^{-1}$, it follows that if $U\roundedb{T\roundedb{u},T\roundedb{v}}$ holds then also $U\roundedb{u,v}$ holds and we are done.
\end{exmp}

\begin{question} \label{que:fields}
Does \cref{main_theorem} holds when we replace a group by a field? Namely, suppose that in the context of \cref{main_theorem}, we remove the assumption that $M$ is 1-based and expands a group with small quotients and replace it with the assumption that $M$ expands a field (which will have to be algebraically closed), but keep all the other assumptions. Does the conclusion still hold?
\end{question}

This seems to be open even if we assume that $M$ is strongly-minimal.% 
\footnote{Cf. the discussion in \url{https://mathoverflow.net/q/487997/101562}.}
At least in the case when $M$ is a (pure) algebraically closed field (so strongly-minimal), the answer is positive.
\begin{fact}[{\cite[Theorem 1]{Hrushovski1992}}]
\label{hrushovski_theorem_about_strongly_minimal_expansions_of_fields}
Let $M$ be an algebraically closed field (in the language of rings), and let $N$ be a strongly-minimal $0$-expansion of $M$. Denote $K := \acl_{N} \roundedb{0}$, and suppose that for all $A \subseteq M$, $\acl_{M} \roundedb{A \cup K} = \acl_{N} \roundedb{A \cup K}$. Then $M$ and $N$ are interdefinable.
\end{fact}

%Let $M$ be an algebraically closed field (in the language of rings), and let $N$ be an $\omega$-saturated $0$-expansion of $M$. Suppose that every unary subset that is definable in $N$ is also definable in $M$. Then (by \cref{obs_not_adding_unary_subsets_enough_to_check_for_saturated_enough}) $N$ is strongly-minimal as well, so the assumptions of \cref{hrushovski_theorem_about_strongly_minimal_expansions_of_fields} are satisfied. Thus, \cref{hrushovski_theorem_about_strongly_minimal_expansions_of_fields} and \cref{main_theorem} can be seen as two instances of the same phenomenon.

\section{Elementary extensions of archimedean ordered abelian groups of finite rank} \label{sec:elementary extentions}

In this section, we apply \cref{main_theorem} in the context of ordered abelian groups to prove \cref{main_theorem_2_intro} (see \cref{thm_application_small_quotients}).

\begin{lem}
\label{lem_extracting_an_interval}
Let $N = \roundedb{G,+,<}$ be elementarily equivalent to an archimedean ordered abelian group with small quotients, and let $M$ be a reduct (but not necessarily a $0$-reduct) of $N$ that expands $\roundedb{G,+}$. Suppose that there exists a unary subset $D \subseteq G$ that is definable in $M$ but not in $\roundedb{G,+}$. Then, there exists $b \in G \cup \curlyb{\infty}$ such that the interval $\roundedb{0,b}$ is infinite and definable in $M$.
\end{lem}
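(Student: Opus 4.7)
The plan is to apply \cref{cor_shape_of_unary_subsets_archimedean_small_quotients} to obtain an explicit form for $D$, then use the group operations available in $M$ to reduce to a finite union of open intervals, and finally extract a single infinite interval that we can translate to start at~$0$.

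In detail, I will first decompose $D = \bigcup_{i=1}^N D_i$ with a common modulus $m$ via the corollary, each $D_i$ being either a singleton or of the form $I_i \cap (mG + g_i)$ with $I_i$ an open generalized interval. Singletons, cosets of $mG$, and their finite Boolean combinations are all $(G,+)$-definable with parameters; so the hypothesis that $D$ is not $(G,+)$-definable forces at least one $D_{i_0}$ to have $I_{i_0}$ a proper infinite open generalized interval. I then restrict to a single coset: each slice $D \cap (mG + g)$ is $M$-definable (as cosets of $mG$ are $(G,+)$-definable using a coset representative as parameter), and since $D$ splits into finitely many such slices by small quotients, at least one slice $D^*$ is not $(G,+)$-definable. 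After translating by the coset representative I may assume $D^* \subseteq mG$, and I define $E := \{x \in G : mx \in D^*\}$, which is $M$-definable via the $(G,+)$-definable injection $\times m$ (using that $G$ is torsion-free). By the decomposition $E$ is a finite union of open intervals in the divisible hull of $G$, infinite, and not $(G,+)$-definable.

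For the extraction step I split into two cases depending on whether $E$ contains an unbounded component. If some component of $E$ is a ray, then $D^*$ is essentially a ray inside $mG$, and the set $F := \{x \in G : mx \in D^* - ((mG)\setminus D^*)\}$ equals $(0,\infty)$ (or $(-\infty,0)$, by symmetry) by a direct computation using \cref{lem_intersection_of_interval_with_coset} to fill in differences from cosets inside intervals; this yields the desired interval with $b = \infty$. If all components of $E$ are bounded, I form $E' := \{x \in G : mx \in D^* - D^*\}$, a symmetric open interval centered at $0$ with endpoints of the form $\pm \delta/(nm)$ where $\delta \in G$ and $n$ is a denominator coming from $\spnplus{\bar b}$; the $nm$-fold Minkowski sum of $E'$ is then an open interval with endpoints $\pm \delta \in G$, and translating by $\delta$ yields the interval $(0, 2\delta)$ (removing its endpoints via $(G,+)$-definable singletons), of the required form.

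The main obstacle I expect is the multi-piece case, in which $E$ is a genuine finite disjoint union of several open intervals in the divisible hull. In this case a direct Minkowski sum need not collapse $E$ into a single interval, because the gaps between components can be non-standard large relative to the components' lengths. My plan to handle this is to combine the reduction-to-a-single-coset trick with carefully chosen translates by elements of $\spnplus{\bar b}$, where $\bar b$ are the parameters defining $D$, in order to isolate a single infinite component of $E$, after which the single-piece arguments above apply. This reduction will use the common modulus $m$ together with the finite index $[G : mG]$ granted by the small-quotients hypothesis, as well as the density-of-cosets-in-intervals property from \cref{lem_intersection_of_interval_with_coset}. The final output is an $M$-definable infinite interval of the form $(0, b)$ with $b \in G \cup \{\infty\}$, as required.
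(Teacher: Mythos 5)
Your opening reductions — decompose $D$ via \cref{cor_shape_of_unary_subsets_archimedean_small_quotients}, restrict to a single coset of $mG$, pull back along multiplication by $m$ to get a finite union of open generalized intervals not definable in $(G,+)$ — are exactly the paper's steps and are fine. The problem is the extraction step, and you yourself flag it: once $E$ has several disjoint bounded components, $D^* - D^*$ (and likewise $D^* - ((mG)\setminus D^*)$) need not be a single interval. For example, if $E = (0,1) \cup (N, N+1)$ with $N$ nonstandard-large, then $E - E$ has three components centered near $-N$, $0$, and $N$, and no finite Minkowski sum will merge them. Your closing paragraph proposes to ``isolate a single infinite component'' by translates from $\spnplus{\bar b}$, but gives no construction; as written this is a plan, not a proof, and it is precisely the nontrivial part of the lemma.

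The paper's fix is worth knowing because it is the idea you were missing. After merging overlapping pieces so the $J_i$ are pairwise disjoint, one first ensures $D$ is \emph{bounded below} (if not, pass to $G\setminus D$ minus finitely many points, which is again a finite union of infinite open generalized intervals and still not $(G,+)$-definable). Then order so $J_1 < J_2 < \cdots$. The isolation of $J_1$ uses a \emph{reflected translate}: in the discrete case, shift so $\min J_1 = 1$; if $J_1$ is a ray you are done, and otherwise with $b = \max J_1$ the set $D \cap (-D + b)$ kills every $J_i$ with $i \ge 2$ (those get reflected to the negative side) and returns $(0,b)$. In the dense case one picks $c$ in the ``left half'' of $J_1$ (this requires \cref{lem_intersection_of_interval_with_coset} to find a point below the midpoint), forms $E = (D-c) \cap (-D+c) = (-d,d)$, and then uses the uniformly definable family $C_g := E \cap (E-g)$ together with a comparison condition to carve out $[0,d)$ and finally $(0,e)$. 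The reflection trick is what guarantees only the extremal interval survives, independently of how nonstandard the gaps between the other $J_i$ are; nothing in your Minkowski-sum approach provides an analogous control. Until that step is supplied, the argument is incomplete.
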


\begin{proof}
By \cref{cor_shape_of_unary_subsets_archimedean_small_quotients}, $D = \bigcup_{i=1}^N D_i$ such that, for each $i$, either
\begin{enumerate}

\item $D_i = \curlyb{c_i}$ is a singleton, or

\item $D_i = I_i \cap \roundedb{mG + g_i}$, where $g_i \in G$ and $I_i$ is an open generalized interval with rational endpoints,

\end{enumerate}
and $1 \le m < \omega$ does not depend on $i$. 

By removing at most finitely many points from $D$, we may assume that, for all $i$, $D_i$ is infinite, and, in particular, it is of the latter form.
Let $h_1, \dots, h_K \in G$ be such that for every $1 \le i \le N$ there exists exactly one $1 \le j \le K$ such that $g_i \equiv_m h_j$. We can rewrite $D$ as 
\[
D = \bigcup_{j=1}^K \roundedb{ \roundedb{mG + h_j} \cap \bigcup_{i=1}^{N_j} I_{j,i} }
\]
where each $I_{j,i}$ is an infinite open generalized interval with rational endpoints and moreover the intersection $\roundedb{ \roundedb{mG + h_j} \cap \bigcup_{i=1}^{N_j} I_{j,i}}$ is infinite.

Since $D$ is not definable in $\roundedb{G,+}$, there exists $j_0$ such that $\roundedb{mG + h_{j_0}} \cap \bigcup_{i=1}^{N_{j_0}} I_{j_0,i}$ is not definable in $\roundedb{G,+}$. By replacing $D$ with 
\[
D \cap \roundedb{mG + h_{j_0}} = \roundedb{mG + h_{j_0}} \cap \bigcup_{i=1}^{N_{j_0}} I_{j_0,i}
\]
(which is clearly definable in $M$), we may assume that $D$ has the following form 
\[
D = \roundedb{mG + g} \cap \bigcup_{i=1}^N I_i
\]
where each $I_i$ is an infinite open generalized interval with rational endpoints.

By replacing $D$ with $D - g$, we may assume that $g=0$, we may assume that $D = mG \cap \bigcup_{i=1}^N I_i$ where each intersection $mG \cap I_i$ is infinite. For each $i$, let $J_i := I_i / m = \set{a \in G}{ma \in I_i}$. Then $J_i$ is an infinite open generalized interval with rational endpoints. Since $D/m = \bigcup_{i=1}^N J_i$ is definable using only $D$ and $+$, and vice versa, $D/m$ is definable in $M$ but not in $\roundedb{G,+}$. Therefore, by replacing $D$ with $D/m$, we may assume that $D$ is of the form $D = \bigcup_{i=1}^N J_i$, where each $J_i$ is an infinite open generalized interval with rational endpoints.

Whenever two intervals $J_{i_1}$ and $J_{i_2}$ have a nonempty intersection, we may replace them with their union, which is also an infinite open generalized interval with rational endpoints. Thus, we may assume that the intervals $\curlyb{J_i}_{i=1}^N$ are pairwise disjoint.
Note that $G \backslash D$ is a finite union of disjoint (not necessarily open) generalized intervals with rational endpoints. By removing at most finitely many points from $G \backslash D$, we obtain a set $D'$ which is a finite union of disjoint infinite open generalized intervals with rational endpoints, and which is definable in $M$ but not in $\roundedb{G,+}$. If $D$ is not bounded from below, $D'$ is bounded from below. Thus, by replacing $D$ with $D'$ if necessary, we may assume that $D$ is bounded from below.

To recap, $D = \bigcup_{i=1}^N J_i$, where each $J_i$ is an infinite open generalized interval with rational endpoints, $D$ is bounded from below, and $D$ is definable in $M$ but not in $\roundedb{G,+}$. By reordering, we may assume that $ i < j  \implies  J_i < J_j$. Since $D$ is not definable in $\roundedb{G,+}$, it is not empty, i.e., $N \ge 1$. Since $D$ is bounded from below, so is $J_1$. There are two cases:

If $N$ is discrete, then (since it is elementarily equivalent to an archimedean ordered abelian group) we may assume that it is an elementary extension of $\roundedb{ \Z,+,< }$. Since $J_1$ is definable in $N$, it has a minimum, which we denote by $a$. By replacing $D$ with $D - a + 1$, we may assume that $a=1$. If $J_1$ is not bounded from above, then $D = J_1 = \roundedb{0, \infty}$ is definable in $M$. Otherwise, $J_1$ has a maximum, which we denote by $b$. Now $D \cap \roundedb{-D + b} = \roundedb{0,b}$ is infinite and definable in $M$.

So suppose that $N$ is dense. $J_1$ is an infinite open generalized interval with rational endpoints bounded from below, so we can write it as $J_1 = \roundedb{\frac{a}{n},\frac{b}{n}}$ with $1 \le n < \omega$, $a \in G$, $b \in G \cup \curlyb{\infty}$, and $a < b$. If $b = \infty$, let $c \in J_1$. Otherwise, by \cref{lem_intersection_of_interval_with_coset} we have $\roundedb{2a,a+b} \cap 2nG \neq \emptyset$, hence $\roundedb{ \frac{a}{n} , \frac{a+b}{2n} } \neq \emptyset$, so let $c \in \roundedb{ \frac{a}{n} , \frac{a+b}{2n} }$.

Let $E := \roundedb{D - c} \cap \roundedb{-D + c}$. Then $E$ is definable in $M$, and, denoting $d := c - \frac{a}{n}$, we have $E = \roundedb{-d,d}$. Note that $d$ is in the divisible hull of $G$, but not necessarily in $G$. For $g \in E$ denote $C_g := E \cap \roundedb{E - g}$. Note that $\set{C_g}{g \in E}$ is uniformly definable in $M$. Let $e \in \roundedb{0,d}$, and let
\[
E_2 := \set{0 \neq g \in E}{C_g \supsetneq C_e}.
\]
Then $E_2$ is definable in $M$. Note that for $g \in E$, if $g \ge 0$, then $C_g = \roundedb{-d , d - g}$, and if $g < 0$ then $C_g = \roundedb{-d - g , d}$. Therefore $E_2 = \roundedb{0,e}$. So $E_2$ is as required.
\end{proof}

\begin{fact}[{\cite[Lemma 5.13]{Alouf_dElbee_19}}]
\label{fact_unstable_implies_new_unary_def_set_in_monster}
Let $\cM$ be a monster model of an unstable theory $T$, and let $\cM^{-}$ be a $0$-reduct of $\cM$ that is stable. Then, there exists a unary subset $\cD \subseteq \cM$ that is definable in $\cM$ but not in $\cM^{-}$.
\end{fact}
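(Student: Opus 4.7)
The plan is to argue the contrapositive: assume that every unary subset $\cD \subseteq \cM$ definable in $\cM$ is already definable in $\cM^{-}$, and deduce that $T := \Th{\cM}$ is stable, contradicting the hypothesis. The key tool will be the standard Shelah-type characterization of stability via counting $1$-types: $T$ is stable if and only if there exists a cardinal $\lambda \ge \pipes{T}$ with $\pipes{S_{1}^{\cM}\roundedb{A}} \le \lambda$ for every $A \subseteq \cM$ of size $\le \lambda$.

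First I would fix a cardinal $\lambda$ with $\lambda = \lambda^{\pipes{T} + \pipes{T^{-}}}$, where $T^{-} := \Th{\cM^{-}}$, so that $\cM^{-}$ is $\lambda$-stable and $\pipes{L}, \pipes{L^{-}} \le \lambda$ (with $L, L^{-}$ the respective languages). Given any $A \subseteq \cM$ with $\pipes{A} \le \lambda$, I would use the hypothesis to choose, for each $L$-formula $\phi\roundedb{x, \bar y}$ with $\pipes{x} = 1$ and each $\bar b \in A^{\pipes{\bar y}}$, an $L^{-}$-formula $\psi_{\phi, \bar b}\roundedb{x, \bar z}$ and a parameter tuple $\bar c_{\phi, \bar b} \in \cM$ such that $\phi\roundedb{\cM, \bar b} = \psi_{\phi, \bar b}\roundedb{\cM, \bar c_{\phi, \bar b}}$. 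Let $C \subseteq \cM$ collect all these tuples; then $\pipes{C} \le \lambda$.

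Next I would build an injection $F \colon S_{1}^{\cM}\roundedb{A} \to S_{1}^{\cM^{-}}\roundedb{A \cup C}$ by sending each $p$ to any completion of the partial $L^{-}$-type
\[
\set{\psi_{\phi, \bar b}\roundedb{x, \bar c_{\phi, \bar b}}}{\phi\roundedb{x, \bar b} \in p} \cup \set{\neg \psi_{\phi, \bar b}\roundedb{x, \bar c_{\phi, \bar b}}}{\neg \phi\roundedb{x, \bar b} \in p}.
\]
This partial type is consistent, being realized in $\cM$ by any realization of $p$, and distinct $p \neq p'$ give rise to mutually inconsistent partial types, so $F$ is injective. By $\lambda$-stability of $\cM^{-}$, $\pipes{S_{1}^{\cM^{-}}\roundedb{A \cup C}} \le \lambda$, and hence $\pipes{S_{1}^{\cM}\roundedb{A}} \le \lambda$, giving the required $1$-type stability of $T$ and thus a contradiction.

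The main obstacle I anticipate is the appeal to the Shelah-type theorem that $1$-type stability implies full stability; everything else is routine cardinal arithmetic and parameter bookkeeping. An alternative route, bypassing this theorem, would be to use the (standard but less elementary) fact that instability of $T$ is witnessed by a formula $\phi\roundedb{x, \bar y}$ with $\pipes{x} = 1$ having the order property, then exploit compactness and the saturation of $\cM$ to cover the family $\set{\phi\roundedb{\cM, \bar b}}{\bar b \in \cM}$ by finitely many uniformly $L^{-}$-definable subfamilies, and finally apply a pigeonhole on the order-property witnesses to transfer the order property to a single $L^{-}$-formula, contradicting stability of $\cM^{-}$.
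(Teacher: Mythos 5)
The paper cites this result as a Fact from \cite{Alouf_dElbee_19} and does not reproduce its proof, so there is no in-paper argument to compare against; I can only assess your proposal on its own. Your primary route is correct. Taking the contrapositive, you assume every unary $\cM$-definable set is $\cM^{-}$-definable, fix $\lambda = \lambda^{\pipes{T}}$ so that $T^{-} := \Th{\cM^{-}}$ is $\lambda$-stable, and for each $A$ of size $\le \lambda$ construct an injection $S_{1}^{\cM}(A) \hookrightarrow S_{1}^{\cM^{-}}(A \cup C)$ by translating each instance $\phi(x, \bar b)$ into a fixed equivalent $L^{-}$-formula with parameters; the parameter set $C$ has size at most $\pipes{T}\cdot\lambda = \lambda$, and injectivity is immediate since distinct complete $L$-types over $A$ disagree on some instance, hence their translated partial $L^{-}$-types are mutually contradictory. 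Note that the construction of $C$ does not actually require the monster model hypothesis; saturation is only needed to ensure that the type-counting criterion you then apply correctly characterizes stability of $T$. The one nontrivial external input, which you correctly flag, is Shelah's reduction theorem that stability can be tested on formulas $\phi(x,\bar y)$ with $\pipes{x} = 1$ (equivalently, that unary $1$-type-counting over sets of size $\lambda$ suffices to detect stability). This is unavoidable: the hypothesis is about \emph{unary} definable sets, so some form of the unary reduction must enter.

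Your alternative route (take a unary formula $\phi$ with the order property, use saturation of the monster plus compactness to extract finitely many $L^{-}$-formulas $\psi_1,\dots,\psi_n$ such that every $\phi(\cM,\bar b)$ equals some $\psi_i(\cM,\bar c)$, then pigeonhole along the order-property witnesses to transfer the order property to a single $\psi_i$) is also correct, and it is the more natural argument for a statement phrased in terms of definable sets rather than cardinal invariants. Both approaches lean on the same Shelah reduction; the difference is whether you then count types or chase order-property witnesses. The second is arguably preferable here because it makes the use of saturation explicit (the compactness step realizing the partial type $\set{\neg\exists\bar z\,\forall x\,(\phi(x,\bar y)\leftrightarrow\psi(x,\bar z))}{\psi \in L^{-}}$ fails without it) and avoids cardinal bookkeeping, but either route is a valid proof of the fact.
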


\begin{fact}[{\cite[Claim 5.16]{Alouf_dElbee_19}}]
\label{fact_detecting_intervals}
Let $N = \roundedb{G,+,<}$ be an elementary extension of $\roundedb{\Z,+,<}$, and let $\varphi \roundedb{x,z}$ be a formula without parameters.
Let $\chi \roundedb{y,z}$ be the formula $\chi_{1} \roundedb{y,z} \wedge \chi_{2} \roundedb{y,z} \wedge \chi_{3} \roundedb{y,z}$ where:
\begin{itemize}

\item $\chi_{1}$ is $\varphi \roundedb{0,z} \wedge \varphi \roundedb{y,z} \wedge \neg \varphi \roundedb{-1,z} \wedge \neg \varphi \roundedb{y+1,z} \wedge \neg \varphi \roundedb{2y,z}$, 

\item $\chi_{2}$ is $\forall w \roundedb{ \roundedb{w \neq 0 \wedge \varphi \roundedb{w,z} } \rightarrow \varphi \roundedb{w-1,z} }$,

\item $\chi_{3}$ is $\forall w \roundedb{ \roundedb{ w \neq y \wedge \varphi \roundedb{w,z} } \rightarrow \varphi \roundedb{w+1,z} }$.
\end{itemize}
Then, for every $b,c \in G$, $N \models \chi \roundedb{b,c}$ if and only if $b > 0$ and $\varphi \roundedb{G,c} = \squareb{0,b}$. 

\end{fact}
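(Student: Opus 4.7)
The plan is to prove both directions separately. For the ``if'' direction, assume $b > 0$ and $\varphi \roundedb{G,c} = \squareb{0,b}$; I would verify each conjunct of $\chi \roundedb{b,c}$ by direct inspection, using the discreteness of $N$ (which follows from $N \equiv \roundedb{\Z,+,<}$). For $\chi_1$, the points $0, b$ lie in $\squareb{0,b}$ while $-1, b+1, 2b$ lie outside (the last using $b > 0$ to get $2b > b$). For $\chi_2$, if $w \in \squareb{0,b}$ and $w \neq 0$ then discreteness gives $w \geq 1$, so $w - 1 \in \squareb{0,b-1} \subseteq \squareb{0,b}$; $\chi_3$ is symmetric.

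For the ``only if'' direction, assume $\chi \roundedb{b,c}$ and set $A := \varphi \roundedb{G,c}$. I would argue in four steps. \emph{Step 1:} $b \neq 0$, since $b = 0$ would force $2b = 0 \in A$, contradicting $\neg\varphi \roundedb{2b,c}$. \emph{Step 2:} $b > 0$. Suppose for contradiction $b < 0$, and consider $E := \set{x \in G}{x \leq b \wedge \varphi \roundedb{x,c}}$. Every $x \in E$ satisfies $x \leq b < 0$, so $x \neq 0$, and $\chi_2$ yields $x - 1 \in A$; since $x - 1 \leq b$ as well, $E$ is closed under $x \mapsto x - 1$. The key observation is that ``closed under $-1$ implies downward closed'' is a first-order theorem of $\Th{\Z,+,<}$, provable by applying the induction schema to $\theta \roundedb{d} := \forall x (\psi \roundedb{x} \to \psi \roundedb{x - d})$, and therefore holds in $N$. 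Applying downward closure to $b \in E$ and $2b \leq b$ gives $2b \in A$, contradicting $\chi_1$.

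\emph{Step 3:} $A \subseteq \squareb{0,b}$. For $A \cap \roundedb{b,\infty}$, if nonempty I would take its minimum $m$ (using the least number principle, which holds in $N$ since it does in $\roundedb{\Z,+,<}$); then $m > 0$, so $\chi_2$ gives $m - 1 \in A$, leading to either $m - 1 > b$ (contradicting minimality) or $m = b + 1$ (contradicting $\neg\varphi \roundedb{b+1,c}$). Symmetrically, $A \cap \roundedb{-\infty, 0}$ is empty using $\chi_3$ applied to its maximum, with $\neg\varphi \roundedb{-1,c}$ providing the contradiction. \emph{Step 4:} $\squareb{0,b} \subseteq A$. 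Let $T := \squareb{0,b} \setminus A$; if $T \neq \emptyset$, let $m := \min T$. Since $0 \in A$, $m \geq 1$, so $m - 1 \in \squareb{0,b} \setminus T \subseteq A$ and $m - 1 \neq b$; then $\chi_3$ forces $m \in A$, a contradiction.

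The main obstacle is the nonstandard subcase of Step 2: naively iterating $\chi_2$ from $b$ only reaches the halo $\set{b - k}{k \in \N}$, which need not contain $2b$ when $b$ is nonstandard. The resolution is to recognize downward closure as a first-order consequence of the Presburger induction schema, which transfers to $N$ by elementarity. The same transfer principle (in the guise of the least number principle) underlies the minimum/maximum arguments in Steps 3 and 4.
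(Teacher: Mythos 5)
The paper cites this statement as Claim 5.16 from Alouf--d'Elb\'ee and does not reproduce a proof, so there is no in-paper argument to compare against; I will evaluate your proof on its own.

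Your proof is correct. The ``if'' direction is a routine verification using discreteness, and you handle it cleanly. For the ``only if'' direction, you correctly identify the central obstacle --- that iterating $\chi_2$ or $\chi_3$ from a nonstandard element only reaches a standard-distance halo --- and you resolve it by transferring first-order facts from $(\Z,+,<)$ to $N$. In Step~2, the sentence asserting that the set $\set{x}{x \leq b \wedge \varphi(x,c)}$ is downward closed whenever it is closed under $x \mapsto x-1$ is a first-order schema (one sentence per choice of $\varphi$) true in $\Z$, hence true in $N$ by elementarity; your invocation of the Presburger induction schema is one valid way to see this, though simply observing truth in $\Z$ and transferring is even more direct. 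In Steps~3 and~4 the least/greatest number principle for definable sets transfers for the same reason, and the case analysis ($m-1 > b$ vs.\ $m = b+1$, etc.) is correctly carried out. A small stylistic remark: Step~2 can be made uniform with Steps~3--4 by instead taking the minimum of $A \cap \squareb{2b,b}$ via the least number principle and pushing it with $\chi_2$ to contradict $\neg\varphi(2b,c)$, avoiding the separate ``downward closure'' lemma; but what you wrote is correct as it stands.
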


\begin{prop}
\label{prop_equiv_between_stability_and_no_new_unary_def_subsets_and_no_new_unary_def_subsets_in_monster}
Let $N = \roundedb{G,+,<}$ be elementarily equivalent to an archimedean ordered abelian group with small quotients, and let $M$ be a reduct (but not necessarily a $0$-reduct) of $N$ that expands $\roundedb{G,+}$. Let $\cN = \roundedb{\cG,+,<}$ be a monster model for $\Th{N}$, and let $\cM$ be the reduct of $\cN$ to the language of $M$. Then the following are equivalent:
\begin{enumerate}

\item $M$ is stable.

\item Every unary subset $D \subseteq G$ that is definable in $M$ is also definable in $\roundedb{G,+}$.

\item Every unary subset $\cD \subseteq \cG$ that is definable in $\cM$ is also definable in $\roundedb{\cG,+}$.

\end{enumerate}

\end{prop}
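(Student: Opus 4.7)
The plan is to establish the cycle $(3) \Rightarrow (1) \Rightarrow (2) \Rightarrow (3)$, with $(2) \Rightarrow (3)$ as the main obstacle.

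For $(3) \Rightarrow (1)$, I apply the contrapositive of \cref{fact_unstable_implies_new_unary_def_set_in_monster} with $\cM^{-} := (\cG,+)$, which is a stable $0$-reduct of $\cM$ since any abelian group is stable (adding finitely many constants first, if needed, to realize $(\cG,+)$ as a literal $0$-reduct; this preserves all relevant properties). Under (3), every unary $\cM$-definable subset of $\cG$ is already $(\cG,+)$-definable, so the cited fact forces $\cM$ to be stable, and then $M$ is stable by $M \equiv \cM$.

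For $(1) \Rightarrow (2)$, I argue contrapositively. If (2) fails, \cref{lem_extracting_an_interval} yields an infinite $M$-definable interval $I = (0,b)$ with $b \in G \cup \curlyb{\infty}$. For distinct $x, y \in I$, the difference $y - x$ lies in $(-b, b) \setminus \curlyb{0}$ and belongs to $I$ precisely when $y > x$; hence the relation $x \prec y :\Leftrightarrow y - x \in I$ defines an infinite linear order on $I$ in $M$, contradicting stability.

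For $(2) \Rightarrow (3)$: after adding finitely many constants to $L_M$ so that $(G,+)$ becomes a $0$-reduct of $M$, I pass to an $\omega$-saturated elementary extension $\tilde N \succeq N$ with reduct $\tilde M := \tilde N|_{L_M}$, which is $\omega$-saturated as a reduct. The key technical step is to transfer (2) from $M$ to $\tilde M$: combining \cref{lem_extracting_an_interval} with its converse (infinite $(0,b)$-intervals are not $(G,+)$-definable, since they are not Boolean combinations of cosets of any $mG$), (2) is equivalent to the condition that $M$ admits no definable infinite interval of the form $(0,b)$. I plan to encode this condition as a scheme of $L_N$-sentences, one for each $n$ and each $L_M$-formula $\theta$ asserting ``no parameter $\bar y$ makes $\theta(\cdot,\bar y)$ an interval of size $\ge n$'', together with a sentence ruling out $(0,\infty)$; using the interval-detection formulas in the spirit of \cref{fact_detecting_intervals}, each such sentence is first-order in $L_N$ and thus preserved under $N \equiv \tilde N$. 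Once (2) holds in $\tilde M$, \cref{obs_not_adding_unary_subsets_enough_to_check_for_saturated_enough}, applied to the $\omega$-saturated $0$-expansion $\tilde M$ over $(\tilde G, +)$ with elementary extension $\cM \succeq \tilde M$, yields (3). The delicate point, which I expect to be the main obstacle, is arranging the first-order encoding of ``no infinite interval'' so that it faithfully captures the absence of infinite intervals---rather than merely pointwise
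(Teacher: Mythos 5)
Your handling of $(3) \Rightarrow (1)$ and $(1) \Rightarrow (2)$ matches the paper (the paper states $(1)\Leftrightarrow(3)$ directly, but the order-on-an-interval argument is the same). The gap is exactly where you flagged it: the first-order encoding in $(2) \Rightarrow (3)$.

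The scheme you propose, ``for each $n$ and each $L_M$-formula $\theta$, no parameter $\bar y$ makes $\theta(\cdot,\bar y)$ an interval of size $\geq n$,'' does not encode ``no infinite definable interval.'' It is strictly stronger, and in fact it is \emph{never} satisfied: already in $(G,+)$ with $G$ discrete, $\{0,1,\dots,n\}$ is definable (it is a finite set), so for each $n$ there is a definable interval of size $n+1$, even though $(G,+)$ trivially satisfies condition $(2)$. More generally, the obstruction is that a structure can have definable finite intervals of unbounded size without having any definable infinite interval, so ``no infinite interval'' is not captured by an $L_N$-scheme of the form you describe, and the transfer from $M$ to $\tilde M$ breaks down at this step.

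The paper avoids this by proving the contrapositive $\neg(3) \Rightarrow \neg(2)$ and \emph{constructing} a witness inside $G$ rather than transferring a universal statement. Given an infinite $\cM$-definable interval $[0,b] = \varphi(\cG,c)$, in the discrete bounded case it forms the single formula
\[
\psi(x) := \exists y\, z\,\bigl(\chi(y,z) \wedge \varphi(x,z)\bigr),
\]
where $\chi$ is the interval-detecting formula of \cref{fact_detecting_intervals}. The set $\psi(G)$ is the union of \emph{all} detectable intervals, and its infiniteness is witnessed by the first-order sentences $\exists y z\,(\chi(y,z)\wedge \exists^{\geq n}x\,\varphi(x,z))$, which do transfer by $N \equiv \cN$. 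Thus $\psi(G)$ is an infinite, bounded-below, convex $M$-definable set, hence not $(G,+)$-definable, refuting $(2)$. The discrete unbounded case and the dense case are handled by simpler elementarity arguments (a single sentence $\theta(z)$ asserting $\varphi(\cdot,z)$ is of the shape $(0,\infty)$, resp.\ $(0,y)$ for some $y>0$). This is the missing idea: rather than a universal scheme asserting no large interval exists, use an existentially-quantified formula whose extension aggregates all detectable intervals, so that unbounded finite approximations in $G$ already give the contradiction.
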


\begin{proof}
$\roundedb{3} \implies \roundedb{2}$ is clear, and $\roundedb{3} \implies \roundedb{1}$ is by \cref{fact_unstable_implies_new_unary_def_set_in_monster} (after naming elements from $\cM$ to make it a $0$-expansion of $\roundedb{\cG,+}$). We show $\roundedb{1} \implies \roundedb{3}$ and $\roundedb{2} \implies \roundedb{3}$.

For $\roundedb{1} \implies \roundedb{3}$: Suppose that there exists a unary subset $\cD \subseteq \cG$ that is definable in $\cM$ but not definable in $\roundedb{\cG,+}$. By \cref{lem_extracting_an_interval}, there exists $b \in \cG \cup \curlyb{\infty}$ such that the interval $I := \roundedb{0,b}$ is infinite and definable in $\cM$. Let $R \roundedb{x_1,x_2}$ be the relation defined by $x_2 - x_1 \in I$. Then $R$ is definable in $\cM$, and on $I$ the relation $R$ coincides with the order $<$. Since $I$ is infinite, $\cM$ is unstable.

For $\roundedb{2} \implies \roundedb{3}$: Suppose that there exists a unary subset $\cD \subseteq \cG$ that is definable in $\cM$ but not definable in $\roundedb{\cG,+}$. By \cref{lem_extracting_an_interval}, there exists $b \in \cG \cup \curlyb{\infty}$ such that the interval $I := \roundedb{0,b}$ is infinite and definable in $\cM$. Let $\varphi \roundedb{x,z}$ be a formula without parameters in the language of $\cM$ and let $c$ be a tuple from $\cG$ such that $\varphi \roundedb{\cG,c} = I$. There are two cases:

If $N$ is discrete, then (since it is elementarily equivalent to an archimedean ordered abelian group) we may assume that it is an elementary extension of $\roundedb{ \Z,+,< }$. If $b = \infty$, let $\theta \roundedb{z}$ be the formula 
\[
\forall x \roundedb{ \varphi \roundedb{x,z} \leftrightarrow x > 0}.
\]
Then $c$ satisfies $\theta \roundedb{z}$, hence by elementarity there exists a tuple $e$ in $G$ that satisfies $\theta \roundedb{z}$. So $\varphi \roundedb{G,e} = \roundedb{0,\infty}$ is definable in $M$ but not definable in $\roundedb{G,+}$ by stability as in the proof of $\roundedb{1} \implies \roundedb{3}$, as required.

So suppose that $b < \infty$. Replace $I = \roundedb{0,b}$ with $I := \squareb{0,b}$ (with $\varphi \roundedb{x,z}$ as before, but with respect to this new $I$). Let $\chi \roundedb{y,z}$ be as in \cref{fact_detecting_intervals}. So $\chi \roundedb{y,z}$ is a formula in the language of $\cM$, and for every $b',c' \in \cG$, $\cN \models \chi \roundedb{b',c'}$ if and only if $b' > 0$ and $\varphi \roundedb{\cG,c'} = \squareb{0,b'}$. 
Let $\psi \roundedb{x}$ be the following formula
\[
\exists y,z \roundedb{ \chi \roundedb{y,z} \wedge \varphi \roundedb{x,z} }.
\]
Then $\psi \roundedb{x}$ is a formula in the language of $\cM$. We show that $\psi \roundedb{G}$ is an infinite convex set and that $0$ is its minimum element. Clearly, if $a \in \psi \roundedb{G}$ then $a \ge 0$, and if $\psi \roundedb{G} \neq \emptyset$ then $0 \in \psi \roundedb{G}$. If $a_1 < a_2 < a_3$ and $a_1, a_3 \in \psi \roundedb{G}$, then, first, $a_1 \ge 0$ and hence $a_2 \ge 0$, and second, there are $b',c' \in G$ such that $\varphi \roundedb{G,c'} = \squareb{0,b'}$ and $a_3 \in \squareb{0,b'}$. In particular, $a_3 \le b'$, so we have $0 \le a_2 \le b'$, i.e., $a_2 \in \squareb{0,b'}$, and therefore $a_2 \in \psi \roundedb{G}$. It remains to show that $\psi \roundedb{G}$ is infinite. For each $n < \omega$, since $\varphi \roundedb{\cG,c} = I = \squareb{0,b}$ is infinite, we have 
\[
\cM \models \chi \roundedb{b,c} \wedge \exists^{\ge n} x \varphi \roundedb{x,c}. 
\]
By elementarity, there are $d_n, e_n \in G$ such that 
\[
M \models \chi \roundedb{d_n,e_n} \wedge \exists^{\ge n} x \varphi \roundedb{x,e_n}, 
\]
hence $\psi \roundedb{G}$ has at least $n$ elements. Therefore $\psi \roundedb{G}$ is infinite.
It follows that $\psi \roundedb{G}$ is not definable in $\roundedb{G,+}$, as required: Otherwise, the same proof as in $\roundedb{1} \implies \roundedb{3}$ shows that $\roundedb{G,+}$ is unstable, a contradiction.

So suppose that $N$ is dense. Recall that $\varphi \roundedb{\cG,c} = I = \roundedb{0,b}$. Let $\theta \roundedb{z}$ be the following formula 
\[
\exists y \roundedb{ \roundedb{y > 0} \wedge \forall x \roundedb{ \varphi \roundedb{x,z} \leftrightarrow 0 < x < y} }.
\]
Then $c$ satisfies $\theta \roundedb{z}$, hence by elementarity there exists a tuple $e$ in $G$ that satisfies $\theta \roundedb{z}$. So there exists $0 < d \in G$ such that $J := \varphi \roundedb{G,e} = \roundedb{0,d}$. Since $d > 0$ and $N$ is dense, $J$ is infinite. It follows that $J$ is not definable in $\roundedb{G,+}$ (again by the proof of $\roundedb{1} \implies \roundedb{3}$), as required.%: Otherwise, the relation $R \roundedb{x_1,x_2}$ defined by $x_2 - x_1 \in J$ is also definable in $\roundedb{G,+}$, but on $J$ this relation coincides with the order $<$, which is unstable.
\end{proof}

\begin{thm}
\label{thm_application_small_quotients}
Let $N = \roundedb{G,+,<}$ be elementarily equivalent to an archimedean ordered abelian group with small quotients, and let $M$ be a reduct (but not necessarily a $0$-reduct) of $N$ that expands $\roundedb{G,+}$. Suppose that one of the following equivalent\footnote{The equivalence follows from \cref{prop_equiv_between_stability_and_no_new_unary_def_subsets_and_no_new_unary_def_subsets_in_monster}.} conditions holds:
\begin{enumerate}

\item $M$ is stable, or

\item every unary subset $D \subseteq G$ that is definable in $M$ is also definable in $\roundedb{G,+}$.

\end{enumerate}
Then $M$ is interdefinable with $\roundedb{G,+}$.

\end{thm}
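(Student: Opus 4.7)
The plan is to reduce the theorem to a direct application of \cref{main_theorem} at the monster-model level. First I would use \cref{prop_equiv_between_stability_and_no_new_unary_def_subsets_and_no_new_unary_def_subsets_in_monster} to collapse hypotheses (1) and (2): that proposition shows they are both equivalent to the stronger monster-level statement (3), namely that every unary $\cM$-definable subset of $\cG$ is already $\roundedb{\cG,+}$-definable, where $\cN = \roundedb{\cG,+,<}$ is a monster extending $N$ and $\cM$ is its reduct to the language of $M$. So I may assume (3) holds.

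Next, set $\cM_0 := \roundedb{\cG,+}$ when $G$ is densely ordered and $\cM_0 := \roundedb{\cG,+,1}$ when $G$ is discretely ordered. I would verify the hypotheses of \cref{main_theorem} with $\cM_0$ playing the role of ``$M$'' and $\cM$ that of ``$N$''. Most of these are routine: $\cM_0$ is weakly-minimal by \cref{abelian_group_with_small_quotients_is_weakly_minimal} (naming the constant $1$ preserves weak minimality); it is $1$-based as an abelian group; small quotients is an elementary property by \cref{small_quotients_or_torsion_are_elementary_properties}, so $\cG$ inherits it; $\cM$ is $\omega$-saturated since $\cN$ is and saturation passes to reducts; the unary-preservation requirement is exactly (3); and, after adjoining to both sides the finitely many parameters needed to $\emptyset$-define the group operation (and the element $1$ in the discrete case), $\cM$ becomes a $0$-expansion of $\cM_0$. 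Since those parameters live in $\cG$, they do not affect the interdefinability claim, which permits parameters in any case.

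The remaining hypothesis, $\acl_{\cM_0} = \acl_{\cM}$, is the real heart of the argument. Here I would invoke \cref{cor_acl_archimedean_small_quotients}, which computes $\acl_{\cN}\roundedb{A} = \spnplus{A} \cap \cG = \acl_{\cM_0}\roundedb{A}$ for every $A \subseteq \cG$. Then the squeeze $\acl_{\cM_0} \subseteq \acl_{\cM} \subseteq \acl_{\cN}$, coming from the reduct chain $\cM_0 \subseteq \cM \subseteq \cN$, forces equality. With all hypotheses in hand, \cref{main_theorem} yields that $\cM_0$ and $\cM$ are interdefinable, and \cref{interdefinability_passes_to_elementary_substructures} descends this to interdefinability of $M_0$ and $M$. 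Since the constant $1$ (when used) lies in $G$, this translates into interdefinability (with parameters) of $M$ with $\roundedb{G,+}$.

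The main delicate point is the $\acl$ equality: without the explicit description in \cref{cor_acl_archimedean_small_quotients}, there would be no a priori reason for an intermediate reduct between $\roundedb{G,+}$ and $\roundedb{G,+,<}$ not to enlarge algebraic closure, and this is exactly where the archimedean/small-quotients hypotheses enter essentially. Everything else is bookkeeping with parameters and saturation, funneling the monster-level data into the shape demanded by \cref{main_theorem}.
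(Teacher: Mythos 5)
Your overall strategy is the same as the paper's: apply \cref{prop_equiv_between_stability_and_no_new_unary_def_subsets_and_no_new_unary_def_subsets_in_monster} to get the monster-level unary-preservation condition, compute $\acl$ via \cref{cor_acl_archimedean_small_quotients}, squeeze, and then funnel everything into \cref{main_theorem}. However, there is a genuine gap in the $\acl$ squeeze, and it is exactly the place where you underestimated the parameter bookkeeping.

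You write the squeeze $\acl_{\cM_0}\subseteq\acl_{\cM}\subseteq\acl_{\cN}$ as ``coming from the reduct chain $\cM_0\subseteq\cM\subseteq\cN$,'' and you propose to adjoin only the finitely many parameters needed to $\emptyset$-define $+$ (and $1$). The first inclusion is then fine, but the second one is not justified. The theorem explicitly allows $M$ to be a reduct of $N$ that is \emph{not} a $0$-reduct: the symbols of $L_M$ are defined in $N$ by $L_N$-formulas using parameters scattered throughout $G$, and these parameters persist when you pass to $\cN$ and $\cM$. Consequently all you get for free is $\acl_{\cM}(A)\subseteq\acl_{\cN}(A\cup G)$, not $\acl_{\cM}(A)\subseteq\acl_{\cN}(A)$. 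After adding only finitely many parameters $P$, your two computed endpoints become $\spnplus{A\cup P}\cap\cG$ on the left and $\spnplus{A\cup G}\cap\cG$ on the right, which do not coincide when $G\not\subseteq\spnplus{A\cup P}$ (the typical case). The same issue silently underlies your claim that ``$\cM$ is $\omega$-saturated since $\cN$ is and saturation passes to reducts'': an $L_M$-type over a finite set translates into an $L_N$-type over a finite set together with arbitrarily many parameters from $G$, so one really needs $\cN$ to be (at least) $\pipes{G}^+$-saturated.

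The paper's proof fixes both problems at once by naming \emph{all} of $G$ as constants, forming $\cG_G$, $\cM_G$, $\cN_G$, and taking $\cN$ to be $\pipes{G}^+$-saturated so that $\cM_G$ remains $\omega$-saturated. Then $\cN_G$ genuinely $0$-expands $\cM_G$, the squeeze $\acl_{\cG_G}\subseteq\acl_{\cM_G}\subseteq\acl_{\cN_G}$ is legitimate, and \cref{cor_acl_archimedean_small_quotients} pins both ends at $\spnplus{A\cup G}\cap\cG$. Your argument would be repaired by replacing ``finitely many parameters'' with ``all of $G$'' (and strengthening the saturation accordingly); as it stands, the $\acl$ equality is not established.
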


\begin{proof}

Since every abelian group is 1-based (in the pure group language; see \cref{pure_abelian_group_is_1_based}), $\roundedb{G,+}$ is 1-based. By \cref{abelian_group_with_small_quotients_and_small_torsion_is_weakly_minimal}, $\roundedb{G,+}$ is also weakly-minimal.

Let $\cN = \roundedb{\cG,+,<}$ be a monster model for $\Th{N}$ which is at least $\pipes{G}^+$-saturated, and let $\cM$ be the reduct of $\cN$ to the language of $M$. Denote by $\cG_G$, $\cM_G$ and $\cN_G$ the structures obtained from $\roundedb{\cG,+}$, $\cM$ and $\cN$ by adding all the elements of $G$ as constants. Then $\cG_G$ is weakly-minimal and 1-based, and $\cM_G$ is $\pipes{G}^+$-saturated. Also note that now $\cN_G$ $0$-expands $\cM_G$ which $0$-expands $\roundedb{\cG,+}$. 
By \cref{prop_equiv_between_stability_and_no_new_unary_def_subsets_and_no_new_unary_def_subsets_in_monster}, every unary subset $\cD \subseteq \cG$ that is definable in $\cM$ (equivalently, in $\cM_G$) is also definable in $\roundedb{\cG,+}$ (equivalently, in $\cG_G$).

By \cref{cor_acl_archimedean_small_quotients}, for every $A \subseteq \cG$ we have $\acl_{\cN_G} \roundedb{A} = \acl_{\cN} \roundedb{A \cup G} = \acl_{\roundedb{\cG,+}} \roundedb{A \cup G} = \acl_{\cG_G} \roundedb{A}$ (note that if $N$ is discrete then $1 \in G$ 
 and this was required in \cref{cor_acl_archimedean_small_quotients}). Since $\cN_G$ $0$-expands $\cM_G$ which $0$-expands $\cG_G$, for every $A \subseteq \cG$ we have $\acl_{\cG_G} \roundedb{A} \subseteq \acl_{\cM_G} \roundedb{A} \subseteq \acl_{\cN_G} \roundedb{A}$. Therefore $\acl_{\cG_G} = \acl_{\cM_G} = \acl_{\cN_G}$.

Now all the assumptions of \cref{main_theorem} hold (with $\cG_G$ and $\cM_G$ taking the roles of $M$ and $N$ from \cref{main_theorem}, respectively), therefore, by \cref{cor_main_theorem_passing_to_elementary_extensions}, $M$ is interdefinable with $\roundedb{G,+}$.
\end{proof}

By \cref{fact_torsion_free_abelian_group_of_finite_rank_has_small_quotients} we get:

\begin{cor}
\label{cor_application_finite_rank}
\cref{thm_application_small_quotients} holds if instead of ``with small quotients'' we write ``of finite rank''.
\end{cor}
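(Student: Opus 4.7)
The plan is essentially a one-line reduction to \cref{thm_application_small_quotients}. Suppose $N = (G,+,<)$ is elementarily equivalent to an archimedean ordered abelian group $G_0$ of finite rank. Since $G_0$ is an ordered abelian group it is torsion-free, so by \cref{fact_torsion_free_abelian_group_of_finite_rank_has_small_quotients}, $G_0$ has small quotients. By \cref{small_quotients_or_torsion_are_elementary_properties}, having small quotients is an elementary property of abelian groups, so $G$ also has small quotients (since $(G,+) \equiv (G_0,+)$ as pure groups, being reducts of elementarily equivalent ordered groups).

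Therefore $N$ is elementarily equivalent to an archimedean ordered abelian group with small quotients, namely $G_0$ itself, so the hypotheses of \cref{thm_application_small_quotients} are met and the conclusion follows directly: under either assumption (1) or (2), $M$ is interdefinable with $(G,+)$.

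There is no real obstacle here; the entire content is packaged in the two cited facts (finite rank $\Rightarrow$ small quotients for torsion-free abelian groups, and elementarity of small quotients). The corollary is simply the observation that ``finite rank'' is a strictly stronger hypothesis than ``small quotients'' within the class of archimedean ordered abelian groups.
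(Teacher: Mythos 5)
Your proof is correct and matches the paper's approach exactly: the paper also derives this corollary in one line from \cref{fact_torsion_free_abelian_group_of_finite_rank_has_small_quotients}, using that an archimedean ordered abelian group is torsion-free. The detour through \cref{small_quotients_or_torsion_are_elementary_properties} is unnecessary (as you yourself note, $G_0$ itself already witnesses the hypothesis of \cref{thm_application_small_quotients}), but it does no harm.
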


\begin{cor}
\label{cor_defining_the_order_on_an_interval}
Let $N = \roundedb{G,+,<}$ be elementarily equivalent to an archimedean ordered abelian group with small quotients, and let $M$ be a reduct (but not necessarily a $0$-reduct) of $N$ that is a proper expansion of $\roundedb{G,+}$. Then there exists  $b \in G \cup \curlyb{\infty}$ such that the interval $I := \roundedb{0,b}$ is infinite and definable in $M$ and the restriction of $<$ to $I$ is definable in $M$.
\end{cor}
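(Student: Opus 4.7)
The plan is to combine \cref{thm_application_small_quotients} with \cref{lem_extracting_an_interval} to produce the interval, and then recover the order on it by a simple trick using the group operation, which is already available in $M$.

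First, since $M$ is a proper expansion of $\roundedb{G,+}$, it is not interdefinable with $\roundedb{G,+}$. By the contrapositive of \cref{thm_application_small_quotients}, condition $(2)$ of that theorem must fail: there exists a unary subset $D \subseteq G$ definable in $M$ but not definable in $\roundedb{G,+}$. Applying \cref{lem_extracting_an_interval} to this $D$, we obtain $b \in G \cup \curlyb{\infty}$ such that the interval $I := \roundedb{0,b}$ is infinite and definable in $M$. This handles the first half of the statement.

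For the second half, the key observation (already used inside the proof of \cref{prop_equiv_between_stability_and_no_new_unary_def_subsets_and_no_new_unary_def_subsets_in_monster}, implication $(1) \implies (3)$) is that $M$ expands $\roundedb{G,+}$, so the binary relation
\[
R \roundedb{x_1,x_2} \; :\equiv \; x_2 - x_1 \in I
\]
is definable in $M$. I would then verify that $R$ restricted to $I \times I$ coincides with the restriction of $<$ to $I \times I$: if $x_1,x_2 \in I$, then $x_2 - x_1 \in \roundedb{-b,b}$, so $x_2 - x_1 \in I = \roundedb{0,b}$ holds if and only if $x_2 - x_1 > 0$, i.e., $x_1 < x_2$. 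Hence the set $\set{\roundedb{x_1,x_2} \in I \times I}{x_1 < x_2}$ is defined in $M$ by the formula $x_1 \in I \wedge x_2 \in I \wedge R \roundedb{x_1,x_2}$, completing the proof.

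There is no real obstacle here, since both ingredients (\cref{thm_application_small_quotients} and \cref{lem_extracting_an_interval}) have already been established, and the order-recovery step is a two-line calculation. The only point requiring any attention is the elementary check that $R \cap \roundedb{I \times I}$ really equals $< \cap \roundedb{I \times I}$, which is immediate from the fact that $I$ is precisely the set of positive elements below $b$.
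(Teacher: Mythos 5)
Your proposal is correct and matches the paper's own proof: both deduce from \cref{thm_application_small_quotients} that some unary $D$ definable in $M$ is not definable in $\roundedb{G,+}$, invoke \cref{lem_extracting_an_interval} to extract the interval $I = \roundedb{0,b}$, and recover $<$ on $I$ via the relation $x_2 - x_1 \in I$. The only difference is that you spell out the two-line verification that this relation agrees with $<$ on $I \times I$, which the paper states without detail.
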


\begin{proof}
Since $M$ is a proper expansion of $\roundedb{G,+}$, by \cref{thm_application_small_quotients} there exists a unary subset $D \subseteq G$ that is definable in $M$ but not definable in $\roundedb{G,+}$. 
By \cref{lem_extracting_an_interval}, there exists $b \in G \cup \curlyb{\infty}$ such that the interval $I := \roundedb{0,b}$ is infinite and definable in $M$. Let $R \roundedb{x_1,x_2}$ be the relation defined by $x_2 - x_1 \in I$. Then $R$ is definable in $M$, and on $I$ the relation $R$ coincides with the order $<$. 
\end{proof}

In particular, \cref{cor_defining_the_order_on_an_interval} holds for elementary extensions of $\roundedb{\Z,+,<}$ and $\roundedb{\Q,+,<}$. This answers positively a question of Conant, see \cite[Question 1.5]{Conant2018_no_intermediate}.

%%%%%%%%%%%%%%%%%%%%%%%%%%%%%%%%%%%%%%%%%%%%%%%%%%%%%

\bibliographystyle{alpha}
\bibliography{main}

\end{document}